\newtheorem{thm}{Theorem}[section]
\newtheorem{defn}[thm]{Definition}
\newtheorem{prop}[thm]{Proposition}
\newtheorem{cor}[thm]{Corollary}
\newtheorem{rem}[thm]{Remark}
\newtheorem{lem}[thm]{Lemma}
\newtheorem*{lem*}{Lemma}
\newtheorem*{thm*}{Theorem}
\newtheorem*{cor*}{Corollary}
\newtheorem*{rem*}{Remark}
\newtheorem*{clm*}{Claim}
\newtheorem{ques}{Question}
\newtheorem*{ques*}{Question}
\newcommand{\N}{\mathbb{N}}
\newcommand{\girth}{\mathrm{girth}}
\newtheorem*{mainthm}{Main Theorem}
\newcommand\blfootnote[1]{%
  \begingroup
  \renewcommand\thefootnote{}\footnote{#1}%
  \addtocounter{footnote}{-1}%
  \endgroup
}
\title{Graphical small cancellation and hyperfiniteness of boundary actions}
\author{Chris Karpinski, Damian Osajda, Koichi Oyakawa}
\date{}
\begin{document}

\maketitle
\blfootnote{MSC (2020) classes: 20F65, 03E15}
\begin{abstract}
    We study actions of (infinitely presented) graphical small cancellation groups on the Gromov boundaries of their coned-off Cayley graphs. 
    %\textcolor{black}{Chris: I looked through various papers on graphical small cancellation groups and it seems there is no consistent terminology for the coned-off Cayley graph (most papers don't give it any name). So I would suggest keeping "coned-off"}-- natural hyperbolic Cayley graphs associated to such groups. 
    We show that a class of graphical small cancellation groups, including (infinitely presented) classical small cancellation groups, admit {hyperfinite} boundary actions, more precisely, the orbit equivalence relation that they induce on the boundaries of the coned-off Cayley graphs is hyperfinite. 
    %\DO{I have doubts about singular/plural used here; but I am not an expert in English} \textcolor{black}{Chris: I think you have a point. I changed "boundary" to "boundaries"}
\end{abstract}

\section{Introduction}

%\textcolor{black}{In descriptive set theory, one studies equivalence relations with various properties on Polish spaces, i.e.\ separable and completely metrizable topological spaces. Among the equivalence relations that one may equip on a Polish space, a particularly simple class are the \emph{hyperfinite} equivalence relations, which are equivalence relations that can be decomposed as a countable increasing union of equivalence relations all of whose equivalence classes are finite. Equivalently, these equivalence relations can be thought of as those whose equivalence classes can be ``re-arranged'' in a Borel manner to have the structure of lines. }

%\textcolor{black}{On the other hand, in the field of geometric group theory, one studies groups as geometric objects, transporting notions from differential geometry, such as negative curvature, to group theory. Groups with notions of negative curvature have a ``boundary at infinity'', a Polish space on which the group acts by homeomorphisms. A somewhat unexpected connection between descriptive set theory and geometric group theory has been the study of the orbit equivalence relations of ``negatively curved'' groups acting on their boundaries. Remarkably, for many ``negatively curved'' groups, this orbit equivalence relation turns out to be hyperfinite.}

The classification problem, that is, classifying mathematical objects up to an equivalence relation, is a fundamental question arising in all areas of mathematics. A naive approach to this problem is to try to find a complete invariant in a definable way (e.g. two $n \times n$ complex matrices are similar if and only if they have the same Jordan normal form). However, it is not always possible to classify objects by a definable complete invariant. For example, a famous fact in measure theory is that the equivalence relation on the unit interval $[0,1]$ defined by $a \sim b \iff a-b \in \mathbb{Q}$ doesn't admit a Borel subset containing exactly one element from each equivalence class.

Descriptive set theory provides methods and notions to measure the complexity of classification problems, or equivalence relations, even when they are more complicated than having complete invariants. In this paper, we study the complexity of the orbit equivalence relation of dynamical systems that arise in geometry and group theory by using one of these notions called hyperfiniteness, which is the next simplest complexity after the completely classifiable one. More specifically, when a group acts isometrically on a hyperbolic space, the action naturally extends to the Gromov boundary of the hyperbolic space. This action on the boundary has been extensively studied in geometric group theory along with its connection to other areas such as ergodic theory and operator algebras, and hyperfiniteness of boundary actions is a rapidly growing research topic due to its relation to problems surrounding Weiss's conjecture.

The question of the hyperfiniteness of boundary actions of groups has its roots in the work of Dougherty, Jackson and Kechris \cite{DJK94}, who showed that the tail equivalence relation on the space $\Omega^{\N}$ of infinite sequences in some countable alphabet $\Omega$ is hyperfinite. Using the description of the Gromov boundary of a free group as the set of infinite reduced words in the free generators, this immediately implies that the orbit equivalence relation of the action of a countably generated free group on its Gromov boundary is hyperfinite. 

The work of Dougherty, Jackson and Kechris was later generalized 
by Huang, Sabok and Shinko \cite{HSS19}, who showed that cubulated hyperbolic groups admit hyperfinite actions on their Gromov boundaries. This was finally proved for all hyperbolic groups by Marquis and Sabok \cite{MS20}, and recently a new proof was found by Naryshkin and Vaccaro \cite{NaryshVacc}. In \cite{KarRelHyp}, the case of relatively hyperbolic groups was treated, where it was shown that the action of relatively hyperbolic groups on their Bowditch boundaries is hyperfinite. In another direction, Przytycki and Sabok showed that mapping class groups of finite type orientable surfaces induce hyperfinite equivalence relations on the boundaries of the arc and curve graphs \cite{przytycki_sabok_2021}. \textcolor{black}{Furthermore, in \cite{Hyp_Tree_Actions} sufficient conditions were identified for an action of a countable group on a countable tree such that the induced orbit equivalence relation on the Gromov boundary of the tree is hyperfinite. These conditions include acylindricity of the action of the group on the tree.}

Substantial progress in the study of boundary actions of groups was achieved by Oyakawa \cite{MR4715159}, who showed that for every countable acylindrically hyperbolic group $G$, there exists a hyperbolic Cayley graph $\Gamma$ of $G$ with an acylindrical action of $G$ such that the orbit equivalence relation of $G$ acting on the Gromov boundary $\partial \Gamma$ is hyperfinite. 
%\DO{shouldn't we mention "Hyperfiniteness for group actions on trees" somewhere here?} \textcolor{black}{Chris: Added above}
The question remains of the further generalization to \emph{all} {\color{black}acylindrical actions on hyperbolic Cayley graphs} of acylindrically hyperbolic groups. 

\begin{ques}
    \label{A}
    Given a countable acylindrically hyperbolic group $G$, are the orbit equivalence relations of the actions of $G$ on the Gromov boundaries {\color{black}induced by acylindrical actions on its hyperbolic} Cayley graphs hyperfinite?
\end{ques}

(Classical) Small cancellation (see e.g.\ \cite{MR1086661}) is a powerful tool for constructing infinite groups. Its more general version -- the graphical small cancellation -- has been recently used for providing examples of groups with very exotic properties \cite{OsajdaMonsters,Osajda,Small_Cancellation_Burnside}.
Graphical small cancellation groups fit into the framework of boundary actions of acylindrically hyperbolic groups, since they are examples of acylindrically hyperbolic groups \cite{Gruber_Sisto} \textcolor{black}{(note that the results in \cite{Gruber_Sisto} don't require finiteness of the generating set, as explained in \cite{GruberPhDThesis})}. %\DO{don't we want here the published version of [5] from TAMS?} \textcolor{black}{Chris: I think Gruber's PhD thesis [5] is not published. The paper by Gruber in TAMS is a different paper that deals with graphical C(6) and C(7) groups and doesn't mention acylindrical hyperbolicity, so I think that's not the one we want.}
We consider graphical small cancellation groups whose underlying graph satisfies a property called \emph{extreme fineness} (see Definition \ref{def: extremely fine}). This class of groups includes all the classical small cancellation ones. %\DO{We give an affirmative answer to Question~\ref{A} in this setting.}\KO{I think the action of graphical small cancellation groups on its coned-off Cayley graph don't need to be acylindridal even in the case of classical small cancellation, so the main theorem doesn't exactly fit into the setting of Question~\ref{A}. Gruber-Sisto proved acylindridal hyperbolic of graphical small cancellation groups by showing that their action on the coned off Cayley graph has a WPD element, which is much weaker than acylindricity of the action.}
%\textcolor{black}{Motivated by the above-mentioned results on hyperfiniteness of boundary actions of groups with negative curvature features, we prove the following:}

%The main result of this paper is the following: 

\begin{mainthm}
    %\label{thm:main theorem}
    
    Let $G$ be a group defined by a graphical $C'(1/10)$ presentation $\langle S \vert \Gamma \rangle$ with $S$ countable and with the graph $\Gamma$ having countably many connected components $(\Gamma_n)_{n \in \N}$, all of which are finite. Suppose that $\Gamma$ is extremely fine. Let $Y$ denote the associated coned-off Cayley graph. Then the action $G \curvearrowright \partial Y$ is hyperfinite. 
\end{mainthm}

%\DO{$\Theta$ here and $\Gamma$ at the begining of Sec 2.3 are inconsistent; later in Sec 2.3 its $\Theta$ again} \textcolor{black}{Chris: Thanks! This should be fixed now.}

{\color{black}Note that neither $\Gamma$ nor $S$ needs to be finite, that is, $G$ needs not to be finitely presented, or even finitely generated. The coned-off Cayley graph $Y$ (Definition~\ref{def:coneCay}) is a natural hyperbolic Cayley graph on which a graphical small cancellation group acts acylindrically. While we need extreme fineness for our proof of the Main Theorem, along the way we develop an approach to boundary actions for general graphical small cancellation, which we believe will be useful for further studies.

Our proof builds off of methods used by Naryshkin--Vaccaro for hyperbolic groups \cite{NaryshVacc} and Oyakawa for acylindrically hyperbolic groups \cite{MR4715159}. The idea of the proof is to show that boundary points in $\partial Y$ can be represented by ``nice'' geodesic rays in the Cayley graph $X:= \mathrm{Cay}(G,S)$. For each boundary point $\xi \in \partial Y$, we obtain a ``bundle'' $G(\xi)$ of geodesic rays based at the identity vertex 1 in $X$ representing $\xi$. Using the small cancellation condition, we show that there exists a geodesic ray $\sigma_{\xi} \in G(\xi)$ with lexicographically least label in $S^{\N}$ (with respect to an arbitrary linear order on $S$). This allows us to construct a Borel injection $\Phi: \partial Y \to S^{\N}$ via $\Phi(\xi) = \sigma_{\xi}$ (where $S^{\N}$ is equipped with the product topology, using the discrete topology on $S$). Letting $E_t$ denote the tail equivalence relation on $S^{\N}$, $E_G$ denote the orbit equivalence relation of $G \curvearrowright \partial Y$, and letting $R_t' = \Phi^{-1}(E_t)$ be the pullback of $E_t$ to $\partial Y$ via $\Phi$ and putting $R_t = R_t' \cap E_G$, the extreme fineness property allows us to conclude that each $E_G$-class intersects only finitely many $R_t$-classes. The hyperfiniteness of $E_t$ (\cite[Corollary 8.2]{DJK94}) and the fact that each $E_G$-class intersects only finitely many $R_t$-classes implies by Proposition \ref{prop:JKL} that $E_G$ is hyperfinite. 

The hyperfiniteness of boundary actions of general graphical small cancellation groups (i.e.\ those whose defining graphs do not necessarily satisfy the extreme fineness property) remains open. 

\begin{ques}
    \label{B}
    Let $G$ be a group defined by a graphical $C'(1/10)$ presentation $\langle S \vert \Gamma \rangle$ with $S$ countable and with the graph $\Gamma$ having countably many connected components $(\Gamma_n)_{n \in \N}$, all of which are finite. Let $Y$ denote the associated coned-off Cayley graph. Is the action $G \curvearrowright \partial Y$ hyperfinite?
\end{ques}

%A negative answer to Question \ref{B} would yield a negative answer to Question \ref{A}, since, while the action $G \curvearrowright Y$ need not be acylindrical in general (\cite[Example 3.10]{Gruber_Sisto}), it is always possible modify the graphical presentation to obtain a $C'(1/10)$ presentation such that the action $G \curvearrowright Y$ is non-elementary acylindrical. 

\textbf{Acknowledgements}: We are grateful to Antoine Poulin, Marcin Sabok and Andrea Vaccaro for reading previous drafts of the paper and offering several helpful comments. \textcolor{black}{We thank the anonymous referees for the many helpful comments and useful feedback.} D.O.\ was partially supported by the Carlsberg Foundation, grant CF23-1225.

\section{Preliminaries}

\subsection{Gromov hyperbolic metric spaces and their boundaries}

In this section, we review the Gromov boundary of a hyperbolic space; readers are referred to \cite{BH99} for more.

\begin{defn}\label{def:Gromov product}
    Let $(S,d_S)$ be a metric space. For $x,y,z\in S$, we define the \textbf{Gromov product} $(x,y)_z$ by
\begin{align}\label{eq:gromov product}
    (x,y)_z=\frac{1}{2}\left( d_S(x,z)+d_S(y,z)-d_S(x,y) \right).    
\end{align}
\end{defn}

\begin{prop} \label{prop:hyp sp}
    For any geodesic metric space $(S,d_S)$, the following conditions are equivalent.
    \item[(1)]
    There exists $\delta \geq 0$ satisfying the following property. Let $x,y,z\in S$, and let $p$ be a geodesic path from $z$ to $x$ and $q$ be a geodesic path from $z$ to $y$. If two points $a\in p$ and $b\in q$ satisfy $d_S(z,a)=d_S(z,b)\le (x,y)_z$, then we have $d_S(a,b) \le \delta$.
    \item[(2)] 
    There exists $\delta \geq 0$ such that for any $w,x,y,z \in S$, we have
    \[
    (x,z)_w \ge \min\{(x,y)_w,(y,z)_w\} - \delta.
    \]
\end{prop}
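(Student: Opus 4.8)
The plan is to prove the two implications separately; this is the standard equivalence between the ``fellow-traveling of geodesics'' formulation of hyperbolicity and the Gromov four-point inequality. The one computational fact I will lean on throughout is the behavior of the Gromov product along a geodesic: if $a$ lies on a geodesic from $z$ to $x$ with $d_S(z,a)=t$, then $d_S(a,x)=d_S(z,x)-t$ and hence $(x,a)_z = t = d_S(z,a)$. I will also use the elementary bound $(x,y)_w \le \min\{d_S(w,x),d_S(w,y)\}$, which follows from $d_S(x,y)\ge |d_S(w,x)-d_S(w,y)|$ and which guarantees that there are points at distance $(x,y)_w$ along geodesics issuing from $w$.

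For $(1)\Rightarrow(2)$, I would fix the basepoint $w$ and choose geodesics $p_x,p_y,p_z$ from $w$ to $x,y,z$. Set $m=\min\{(x,y)_w,(y,z)_w\}$. Since $m\le d_S(w,x),d_S(w,y),d_S(w,z)$ by the bound above, each geodesic has a point at distance exactly $m$ from $w$; call them $a_x,a_y,a_z$. Applying condition (1) to the pair $(x,y)$ at parameter $t=m\le(x,y)_w$ gives $d_S(a_x,a_y)\le\delta$, and applying it to the pair $(y,z)$ gives $d_S(a_y,a_z)\le\delta$, so $d_S(a_x,a_z)\le 2\delta$. Then I expand $(x,z)_w$ using $d_S(w,x)=m+d_S(a_x,x)$, $d_S(w,z)=m+d_S(a_z,z)$, and the triangle inequality $d_S(x,z)\le d_S(a_x,x)+d_S(a_x,a_z)+d_S(a_z,z)$; the terms $d_S(a_x,x)$ and $d_S(a_z,z)$ cancel, leaving $(x,z)_w \ge m - \tfrac12 d_S(a_x,a_z)\ge m-\delta$. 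This is exactly (2) with the same constant $\delta$.

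For $(2)\Rightarrow(1)$, take $x,y,z$ and geodesics $p$ (from $z$ to $x$) and $q$ (from $z$ to $y$), together with $a\in p$, $b\in q$ at distance $t=d_S(z,a)=d_S(z,b)\le(x,y)_z$. By the geodesic identity, $(a,x)_z=t$ and $(b,y)_z=t$. Now I apply the four-point inequality twice with basepoint $z$: first $(x,b)_z\ge\min\{(x,y)_z,(y,b)_z\}-\delta = \min\{(x,y)_z,t\}-\delta = t-\delta$ (using $(x,y)_z\ge t$), and then $(a,b)_z\ge\min\{(a,x)_z,(x,b)_z\}-\delta\ge\min\{t,t-\delta\}-\delta = t-2\delta$. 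Converting back via $d_S(a,b)=d_S(z,a)+d_S(z,b)-2(a,b)_z = 2t-2(a,b)_z$ yields $d_S(a,b)\le 4\delta$, so (1) holds with constant $4\delta$.

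The proof has essentially no obstacle beyond bookkeeping; the only point worth flagging is that the constant is not preserved — (1) with $\delta$ yields (2) with $\delta$, while (2) with $\delta$ only yields (1) with $4\delta$. Since the proposition asserts merely the existence of some $\delta\in\N$ in each formulation, this loss is harmless, and one notes that $4\delta\in\N$ whenever $\delta\in\N$. I expect $(2)\Rightarrow(1)$ to be the cleaner direction, as it uses only the four-point inequality together with the one-line geodesic identity, whereas $(1)\Rightarrow(2)$ additionally needs the auxiliary bound on $(x,y)_w$ to ensure the comparison points $a_x,a_y,a_z$ exist.
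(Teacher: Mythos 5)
Your proof is correct: the paper states this proposition without proof, deferring to the standard references (cf.\ \cite{BH99}), and your two implications — deriving the four-point inequality from fellow-traveling via comparison points $a_x,a_y,a_z$ at distance $m=\min\{(x,y)_w,(y,z)_w\}$, and going back via two applications of the four-point inequality together with the identity $(a,x)_z=d_S(z,a)$ for $a$ on a geodesic from $z$ to $x$ — are exactly the canonical argument. Your constant bookkeeping is also right, and you correctly flag that the loss $\delta\mapsto 4\delta$ in $(2)\Rightarrow(1)$ is harmless since the proposition only asserts the existence of some $\delta\in\N$ for each condition.
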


\begin{defn}
    A geodesic metric space $S$ is called \textbf{hyperbolic}, if $S$ satisfies the equivalent conditions (1) and (2) in Proposition \ref{prop:hyp sp}. We call a hyperbolic space $\delta$-\textbf{hyperbolic} with $\delta \geq 0$, if $\delta$ satisfies both of (1) and (2) in Proposition \ref{prop:hyp sp}. A connected graph $\Gamma$ is called \textbf{hyperbolic}, if the geodesic metric space $(\Gamma,d_\Gamma)$ is hyperbolic, where $d_\Gamma$ is the graph metric of $\Gamma$.
\end{defn}

In the remainder of this section, suppose that $(S,d_S)$ is a hyperbolic geodesic metric space.

\begin{defn}\label{def:seq to infty}
    A sequence $(x_n)_{n=1}^\infty$ of elements of $S$ is said to \textbf{converge to infinity}, if we have $\lim_{i,j\to\infty} (x_i,x_j)_o =\infty$ for some (equivalently any) $o\in S$. For two sequences $(x_n)_{n=1}^\infty,(y_n)_{n=1}^\infty$ in $S$ converging to infinity, we define the relation $\sim$ by $(x_n)_{n=1}^\infty \sim (y_n)_{n=1}^\infty$ if we have $\lim_{i,j\to\infty} (x_i,y_j)_o =\infty$ for some (equivalently any) $o\in S$.
\end{defn}

\begin{rem}
    It's not difficult to see that the relation $\sim$ in Definition \ref{def:seq to infty} is an equivalence relation by using the condition (2) of Proposition \ref{prop:hyp sp}.
\end{rem}

\begin{defn}
    The quotient set $\partial S$ is defined by
    \[
    \partial S = \{ {\rm sequences~in~ }S {\rm ~converging~to~infinity} \} / \sim
    \]
    and called \textbf{Gromov boundary} of $S$.
\end{defn}

\begin{rem}
    The set $\partial S$ is sometimes called the sequential boundary of $S$. Note that $\partial S$ sometimes coincides with the geodesic boundary of $S$ (e.g.\ when $S$ is a proper metric space), but this is not the case in general.
\end{rem}

By \cite[Proposition III.H.3.21]{BH99}, $\partial S$ has a natural metrizable topology, which is compact if $S$ is a proper metric space (i.e.\ when closed balls in $S$ are compact). 

\subsection{Descriptive set theory}

In this section, we review concepts in descriptive set theory.

\begin{defn}\label{def:Polish}
    A \textbf{Polish space} is a separable completely metrizable topological space.
\end{defn}

By \cite[Lemma 4.1]{MR4715159}, if $\Gamma$ is a hyperbolic graph with countable vertex set (with edges assigned length 1), then the Gromov boundary $\partial \Gamma$ with its natural topology is a Polish space. In particular, if $\Gamma$ is a hyperbolic Cayley graph of a countable group (with respect to a possibly infinite generating set), then $\partial \Gamma$ is a Polish space. 

\begin{defn}
    A measurable space $(X,\mathcal{B})$ is called a \textbf{standard Borel space}, if there exists a topology $\mathcal{O}$ on $X$ such that $(X,\mathcal{O})$ is a Polish space and $\mathcal{B}(\mathcal{O})=\mathcal{B}$ holds, where $\mathcal{B}(\mathcal{O})$ is the $\sigma$-algebra on $X$ generated by $\mathcal{O}$.
\end{defn}

\begin{defn}
    Let $X$ be a standard Borel space and $E$ be an equivalence relation on $X$. $E$ is called \textbf{Borel} if $E$ is a Borel subset of $X\times X$. $E$ is called \textbf{countable} (resp.\ \textbf{finite}), if for any $x\in X$, the set $\{ y\in X \mid (x,y)\in E \}$ is countable (resp.\ finite).
\end{defn}

\begin{rem}
    The word ``countable Borel equivalence relation" is often abbreviated to ``CBER".
\end{rem}

If $G$ is a countable group acting by Borel automorphisms on a standard Borel space $X$, then the \textbf{orbit equivalence relation} $E_G$ defined by
     \[
        (x,y) \in E_G \iff \exists\ g\in G {\rm ~s.t.~} gx=y
     \]
is a CBER. In fact, by the classical Feldman--Moore theorem \cite[Theorem 1.3]{KM04}, every CBER arises in this way. 

In this paper, we will be interested in studying a property of CBERs known as \emph{hyperfiniteness}. 

\begin{defn}
    Let $X$ be a standard Borel space. A countable Borel equivalence relation $E$ on $X$ is called \textbf{hyperfinite}, if there exist finite Borel equivalence relations $(E_n)_{n=1}^\infty$ on $X$ such that $E_n\subset E_{n+1}$ for any $n\in\N$ and $E=\bigcup_{n=1}^\infty E_n$.
\end{defn}

Recall that any countable set $\Omega$ with the discrete topology is a Polish space. Hence, $\Omega^\N$ endowed with the product topology is a Polish space. 

\begin{defn}\label{def:tail equivalence}
        Let $\Omega$ be a countable set. The equivalence relation $E_t(\Omega)$ on $\Omega^\N$ is defined as follows: for $w_0=(s_1,s_2,\ldots), w_1=(t_1,t_2,\ldots) \in \Omega^\N$,
    \[
    (w_0,w_1)\in E_t(\Omega) \iff \exists n, \exists m \in\N\cup\{0\} {\rm ~s.t.~}\forall i\in\N, s_{n+i}=t_{m+i}.
    \]
    $E_t(\Omega)$ is called the \textbf{tail equivalence relation} on $\Omega^\N$.
\end{defn}

When the set $\Omega$ is understood, we will often write $E_t$ for $E_t(\Omega)$. 

Proposition \ref{prop:DJK} below is central to the proof of our main theorem and to the proof of all previous results concerning hyperfiniteness of boundary actions of groups. 
%rests on the following We list some facts needed for the proof of the main theorem. 
It is a particular case of \cite[Corollary 8.2]{DJK94}.

\begin{prop}\label{prop:DJK}{\rm (cf. \cite[Corollary 8.2]{DJK94})}
For any countable set $\Omega$, the tail equivalence relation $E_t(\Omega)$ on $\Omega^\N$ is a hyperfinite CBER.
\end{prop}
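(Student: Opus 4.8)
The plan is to prove that the tail equivalence relation $E_t(\Omega)$ on $\Omega^\N$ is hyperfinite by directly exhibiting it as an increasing union of finite Borel equivalence relations. The natural first step is to reduce to a shift-invariance picture: two sequences are tail equivalent precisely when some shift of one eventually agrees with some shift of the other. I would isolate the building block of tail equivalence, namely the \emph{eventual-agreement} (or ``double-tail'') relation $E_0^\Omega$ defined by $w_0\mathrel{E_0^\Omega}w_1$ iff $s_i=t_i$ for all sufficiently large $i$. This is the classical relation $E_0$ on $\Omega^\N$, and the key background fact I would invoke is that $E_0$ is hyperfinite: one writes $E_0=\bigcup_n F_n$ where $F_n$ is the finite Borel relation identifying two sequences that agree from coordinate $n$ onward. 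Each $F_n$ has classes of size $|\Omega|^n$ (finite since $\Omega$ is countable and we fix the first $n$ coordinates among countably many options — in fact each $F_n$-class is countable, and to get genuinely \emph{finite} classes one restricts attention to a fixed tail, which is the standard argument).

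Next I would connect $E_0$ to $E_t$ via the shift map. Let $T\colon\Omega^\N\to\Omega^\N$ be the one-sided shift, $T(s_1,s_2,\dots)=(s_2,s_3,\dots)$. The tail relation $E_t$ is exactly the equivalence relation generated by $E_0$ together with the graph of $T$: two sequences are tail equivalent iff after applying $T$ some number of times to each they become $E_0$-related. Thus $E_t$ is the orbit equivalence relation of the countable group/semigroup action built from $E_0$ and the shift, and the standard route is to observe that $E_t$ is the \emph{increasing union} of the relations generated by $E_0$ and the partial shifts $T^k$ for $k\le n$. The main technical point is to promote hyperfiniteness of $E_0$ to hyperfiniteness of $E_t$: this follows from the general closure property that hyperfiniteness is preserved under taking the equivalence relation generated by a hyperfinite relation and the action of a single Borel automorphism (indeed of $\Z$), which is the content of the fact that hyperfinite relations are closed under finite-index and under $\Z$-extensions.

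\textbf{The hard part} is precisely this last closure step — showing that enlarging $E_0$ by the shift dynamics does not destroy hyperfiniteness. The cleanest way is to appeal to the Dougherty--Jackson--Kechris machinery directly: one checks that $E_t$ is \emph{Borel} (an elementary computation, since the defining condition quantifies over countably many pairs $(n,m)$ and for each the agreement condition is closed), that it is \emph{countable} (each class is a countable union over shift amounts of countable $E_0$-classes), and then that it is hyperfinite by exhibiting the explicit exhaustion $E_t=\bigcup_N G_N$ where $w_0\mathrel{G_N}w_1$ iff there exist $n,m\le N$ with $s_{n+i}=t_{m+i}$ for all $i\in\N$ and additionally the shifts are realized with bounded witnesses. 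Verifying that each $G_N$ is a \emph{finite} equivalence relation (not merely that it is finite-to-one) requires care, because $G_N$ as written need not be transitive; the genuine argument instead builds the finite relations $E_n$ from the Dougherty--Jackson--Kechris construction for $E_0$ and threads the shift through them, which is exactly why I would cite \cite[Corollary 8.2]{DJK94} rather than reprove it. In short, the conceptual skeleton is ``$E_0$ hyperfinite $+$ shift-invariance $\Rightarrow$ $E_t$ hyperfinite,'' and the only genuinely delicate ingredient is the preservation of hyperfiniteness under the shift, which is where I expect the real work to lie and which the cited corollary supplies.
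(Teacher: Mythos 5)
The paper offers no argument for this proposition at all: it is proved purely by the citation to \cite[Corollary 8.2]{DJK94}, and since your final move is to defer the one genuinely hard step to that same corollary, your write-up lands in the same place and the conclusion stands. However, two of the intermediate claims you present as the ``conceptual skeleton'' are false as stated, and they would sink the argument if you ever relied on them instead of the citation. First, the advertised closure property --- that the equivalence relation generated by a hyperfinite relation together with a single Borel automorphism is hyperfinite --- is false in general. The orbit equivalence relation of the Bernoulli shift $F_2 \curvearrowright 2^{F_2}$ (restricted to the free part) is generated by the orbit relation of the generator $a$, which comes from a Borel $\Z$-action and is therefore hyperfinite, together with the graph of the single automorphism $b$; yet it is not hyperfinite. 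The positive results in this vicinity require the automorphism to normalize the subrelation (and are theorems even then), and in your setting the one-sided shift $T$ is not injective, hence not an automorphism at all, so the reduction does not even type-check. Second, your claim that each $F_n$-class has size $\vert\Omega\vert^n$ and is finite fails precisely in the case the proposition is about: for countably infinite $\Omega$ these classes are countably infinite, and ``restricting to a fixed tail'' makes the relations smooth, not finite. So even the hyperfiniteness of the eventual-agreement relation $E_0(\Omega)$ for infinite $\Omega$ is not the trivial increasing-union-of-finite-relations argument; it already requires the Dougherty--Jackson--Kechris theorem that countable hypersmooth equivalence relations are hyperfinite.

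That theorem is also the actual mechanism behind \cite[Corollary 8.2]{DJK94}: one shows the tail relation is hypersmooth (an increasing union of \emph{smooth} --- not finite --- Borel equivalence relations) and then applies ``countable $+$ hypersmooth $\Rightarrow$ hyperfinite,'' rather than any join-with-an-automorphism closure principle. You correctly diagnosed that your candidate exhaustion by the relations $G_N$ fails (witnesses compound from $N$ to $2N$, so transitivity is lost, and the classes are infinite anyway); the repair is not to thread the shift through the $E_0$ construction but to change target and prove hypersmoothness. In short: since the paper's own proof is the citation and yours ends at the citation, your proposal is acceptable exactly to that extent --- but the surrounding scaffolding should not be mistaken for an alternative proof, as its two load-bearing heuristics are, respectively, false and insufficient.
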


The following result will also play a key role in the proof of the main theorem.

\begin{prop}\label{prop:JKL}{\rm \cite[Proposition 1.3.(vii)]{JKL02}}
    Let $X$ be a standard Borel space and $E,F$ be countable Borel equivalence relations on $X$. If $E\subset F$, $E$ is hyperfinite, and every $F$-equivalence class contains only finitely many $E$-classes, then $F$ is hyperfinite.
\end{prop}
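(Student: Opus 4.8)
The statement is a standard closure property of hyperfiniteness, and the guiding principle throughout is that every construction must be carried out on $X$ itself in an $E$-invariant Borel fashion: since $E$ is only assumed hyperfinite (not smooth), the quotient $X/E$ carries no standard Borel structure, and this is the sole source of difficulty. The plan is first to reduce to the case in which the number of $E$-classes inside each $F$-class is a fixed constant, and then to realize $F$ as the orbit equivalence relation of a single Borel automorphism obtained by gluing the finitely many $E$-classes in each $F$-class together.

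First I would show the index is Borel and constant on a partition. For $x \in X$ let $N(x)$ be the number of $E$-classes contained in $[x]_F$; using the Feldman--Moore and Lusin--Novikov theorems for CBERs one checks that $N \colon X \to \N \cup \{\infty\}$ is Borel, and by hypothesis $N(x) < \infty$ everywhere. As $N$ is constant on each $F$-class, the sets $X_k := N^{-1}(k)$ are $F$-invariant (hence $E$-invariant) Borel sets partitioning $X$. Hyperfiniteness is preserved under such countable partitions: if $F|_{X_k} = \bigcup_n F^{(k)}_n$ witnesses hyperfiniteness on each piece, then $F_n := \bigcup_{k \le n} F^{(k)}_n \cup \bigcup_{k > n} \Delta_{X_k}$ (where $\Delta_{X_k}$ is the diagonal on $X_k$) is a finite Borel equivalence relation, the $F_n$ increase, and $\bigcup_n F_n = F$. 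Hence I may assume every $F$-class consists of exactly $k$ $E$-classes.

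Next comes the main construction. By the theorem of Slaman--Steel and Weiss, hyperfiniteness of $E$ lets me write $E = E_T$, the orbit equivalence relation of a single Borel automorphism $T \colon X \to X$; then each $F$-class is a union of exactly $k$ of the $T$-orbits, each such orbit being finite or bi-infinite (order-isomorphic to an interval in $\Z$). Since $F$ is a CBER, Feldman--Moore gives a countable group $\Gamma$ of Borel automorphisms with $E_\Gamma = F$, and Lusin--Novikov uniformization then lets me select, inside each $F$-class, a Borel ``chaining'' of its $k$ many $T$-orbits (for instance, connecting each orbit to the next via the $\Gamma$-element of least index realizing a jump between distinct orbits). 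The goal is to convert this chaining into a single Borel automorphism $S$ whose orbits are exactly the $F$-classes: using the marker lemma for the Borel $\Z$-action $T$ to cut each orbit at canonically and sparsely chosen points, one splices the $k$ orbits of a common $F$-class end-to-end into one $S$-orbit, reindexing uniformly and measurably across all classes. Once $S$ is built, $F = E_S$ is hyperfinite by definition.

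The hard part will be this last surgery. Performing the cutting and splicing so that $S$ is a genuine Borel bijection, simultaneously and coherently handling finite and bi-infinite orbits and doing so without any Borel transversal for $E$, is the only step that is not formal; it is precisely the delicate ``finite extensions preserve hyperfiniteness'' argument, and it rests essentially on the marker lemma for Borel actions of $\Z$. Alternatively, one can iterate an index-two version of this surgery $\lceil \log_2 k \rceil$ times, merging the $E$-classes in pairs at each stage, which isolates the difficulty in the single case $k = 2$.
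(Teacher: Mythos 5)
Note first that the paper does not prove this proposition at all: it is imported as a black box from \cite[Proposition 1.3.(vii)]{JKL02}, so your proposal is being measured against the literature rather than an in-paper argument. Your opening reduction is correct and complete: $N$ is Borel by Feldman--Moore (since $N(x)\ge m$ is witnessed by finitely many group elements moving $x$ to pairwise $E$-inequivalent points, a Borel condition), the $X_k$ are $F$-invariant, and your gluing of the witnessing sequences $F_n^{(k)}$ is the standard argument that hyperfiniteness passes through countable invariant partitions. Invoking Slaman--Steel/Weiss to write $E=E_T$ is also legitimate (one quibble: your final ``$F=E_S$ is hyperfinite by definition'' is false as stated --- it is the nontrivial direction of the same Slaman--Steel/Weiss theorem --- though citing it is fine).

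The genuine gap is the splice, and as literally described it would fail, not merely be hard. First, ``splicing $k$ orbits end-to-end'' does not typecheck on the aperiodic part: a bi-infinite $\Z$-orbit has two ends, so $k\ge 2$ such orbits can never be concatenated into a single $\Z$-orbit; any merge must interleave. Second, any Borel choice of ``the'' cut point or base point on each aperiodic $T$-orbit is exactly a Borel transversal of $E$, which exists only if $E$ is smooth --- contradicting the very observation you open with. The marker lemma does not supply this: it yields complete sections meeting every infinite orbit in an infinite set, so after cutting you hold, within each $F$-class, $k$ orbits each partitioned into infinitely many finite intervals, and the real difficulty is to match these intervals across the $k$ orbits in a coherent Borel fashion with no common indexing available (precisely because $X/E$ is not standard). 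Third, ``connecting each orbit to the next'' presupposes an $F$-class-invariant linear ordering of the $k$ many $E$-classes, and the least-index $\Gamma$-element jumping out of $[x]_E$ depends on the point $x$, not only on its $E$-class, so it does not produce such an ordering without a further invariantization step that is itself the crux. (The periodic part is harmless: on finite orbits a Borel linear order on $X$ selects least elements, so $E$ is smooth there.) In short, your skeleton and reductions are right, but the one step that constitutes the content of \cite[Proposition 1.3.(vii)]{JKL02} is deferred rather than executed, and the specific mechanism you propose for it --- one canonical cut per orbit, end-to-end concatenation --- cannot work as stated; a completion along your lines would instead build the increasing finite Borel relations exhausting $F$ directly out of marker intervals and Lusin--Novikov connecting maps, which is where all the delicacy lives.
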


\subsection{Graphical small cancellation theory}
\label{section: graphical small cancellation}
We follow closely the presentation given in \cite{Small_Cancellation_Burnside}. Throughout the paper, we allow graphs with loops and multi-edges.

\begin{defn}
    Let $S$ be a set. An \textbf{$S$-labeled graph} is a graph $\Gamma = (V,E)$ together with a map $E \to S \coprod S^{-1}$ which is compatible with the orientation of edges (i.e.\ the inversely oriented edge is assigned the inverse label). 
\end{defn}

Given an $S$-labeled graph $\Gamma$, we will denote the label of an edge path $\gamma$ (that is, the word in $S \coprod S^{-1}$ read from labels along $\gamma$) by $\ell(\gamma)$. 
%\DO{somehow $\ell(\gamma)$ immediately makes me think of the length of $\gamma$. Maybe $L(\gamma)$ or $\Lambda(\gamma)$? \textcolor{black}{Chris: $\ell$ is the notation for labels of paths used in Gruber-Sisto, so I think we should stick with it.}
We will also denote $\vert \gamma \vert$ the length of $\gamma$, i.e.\ the number of edges on $\gamma$. From the labeled graph $\Gamma$, we can define a presentation for a group $G(\Gamma)$, called the \textbf{graphical presentation} associated to $\Gamma$: 

$$G(\Gamma) = \langle S \vert \ell(\gamma) : \text{$\gamma$ is a simple closed path in $\Gamma$} \rangle$$
where a path is defined to be \emph{simple} if it does not self-intersect except possibly at its endpoints, and \emph{closed} if its endpoints are the same. 

\begin{defn}
    A \textbf{piece} in a labeled graph $\Gamma$ is a path $p \subset \Gamma$ such that there exists a path $q \subset \Gamma$ with $\ell(p) = \ell(q)$ and there is no label-preserving graph isomorphism $\phi : \Gamma \to \Gamma$ with $\phi(p) = q$. 
\end{defn}

\begin{defn}
    For $\lambda > 0$, a labeled graph $\Gamma$ satisfies \textbf{the graphical $C'(\lambda)$ small cancellation condition} if no two edges with the same initial vertex have the same label and for each piece $p$ contained in a connected component $\Gamma_i$ of $\Gamma$, we have $\vert p \vert < \lambda girth(\Gamma_i)$, where $girth(\Gamma_i) = \min \{\vert \gamma \vert: \gamma \subset \Gamma_i \text{ is a simple closed path}\}$ is the \emph{girth} of $\Gamma_i$. The girth of a tree is defined to be infinity. 
\end{defn}

Note that we can assume that any two edges in $\Gamma$ with common initial vertex have different labels by gluing together any two edges with a common initial vertex and same label. We will refer to the group defined by a labeled graph satisfying the graphical $C'(\lambda)$ small cancellation condition as a $C'(\lambda)$ \textbf{graphical small cancellation group}. Note that \emph{classical} small cancellation presentations and groups are precisely those arising from each connected component of $\Gamma$ being a cycle. Note also that our $C'(\lambda)$ small cancellation condition is stronger than the $Gr'(\lambda)$ small cancellation condition in \cite[Definition 2.3]{Gruber_Sisto} and is different from the $C'(\lambda)$ condition in \cite[Page 5]{Gruber_Sisto}, but is the same condition as in \cite{OsajdaMonsters}.

Recall that the \textbf{Cayley graph} of a group with respect to a generating set $S$ is the graph $\mathrm{Cay}(G,S)$ having vertex set $G$ and edge set $G \times S$, with an edge $(g,s)$ joining the vertices $g$ and $gs$. Denote $G = G(\Gamma)$ and $X = \mathrm{Cay}(G, S)$. Given any vertex $v$ in a connected component $\Gamma_i$ of $\Gamma$ and $g \in G$, there is a unique label preserving graph homomorphism $f_{v, g}: \Gamma_i \to X$ sending $v$ to $g$. The following lemma is implied by \cite[Lemma 2.15]{Gruber_Sisto}.

\begin{lem}
    If $G$ is a $C'(1/6)$ graphical small cancellation group, then for every vertex $v \in V(\Gamma_i)$ and every $g \in G$, the map $f_{v,g}$ above is an isometric embedding of $\Gamma_i$ into $X$, whose image is a convex subgraph of $X$. 
\end{lem}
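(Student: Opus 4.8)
The inequality $d_X(f_{v,g}(a), f_{v,g}(b)) \le d_{\Gamma_i}(a,b)$ holds for all $a,b \in V(\Gamma_i)$ for free, since $f_{v,g}$ is a graph homomorphism and so sends a path of length $n$ in $\Gamma_i$ to a path of length $n$ in $X$. The content of the lemma is thus the reverse inequality (isometry) together with convexity, and the plan is to extract both from the machinery of van Kampen diagrams over graphical presentations, which is exactly the substance of \cite[Lemma 2.15]{Gruber_Sisto}. Throughout I would work with the presentation $G = \langle S \mid \ell(\gamma) : \gamma \text{ a simple closed path in } \Gamma \rangle$ and exploit two bookkeeping facts forced by the hypothesis that no two edges at a common vertex carry the same label: labels of geodesic paths in $\Gamma_i$ are freely reduced words, and the path in $X$ with a prescribed freely reduced label starting at a given vertex is unique.

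For the isometry I would argue by contradiction. Fix $a,b \in V(\Gamma_i)$, let $p$ be a geodesic in $\Gamma_i$ from $a$ to $b$ and $q$ a geodesic in $X$ from $f_{v,g}(a)$ to $f_{v,g}(b)$, and suppose $|q| < |p|$. The concatenation $f_{v,g}(p) \cdot \bar q$ is a closed path in $X$, so its label $w = \ell(p)\,\ell(q)^{-1}$ represents the identity in $G$; fix a reduced van Kampen diagram $D$ for $w$, writing $\partial D = \alpha\beta$ where $\alpha$ is labeled $\ell(p)$ (the ``relator side'', lying in $\Gamma_i$) and $\beta$ is labeled $\ell(q)^{-1}$ (the geodesic side). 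If $D$ has no $2$-cells then $w$ freely reduces to the empty word, forcing $\ell(p) = \ell(q)$ and hence $|p| = |q|$ by uniqueness of labeled paths, a contradiction. So $D$ has a $2$-cell, and I would apply the Greendlinger-type lemma for graphical $C'(1/6)$ presentations: some $2$-cell $\Pi$, coming from a simple closed path in a component $\Gamma_j$, meets $\partial D$ in an arc of length exceeding $\tfrac{1}{2}|\partial \Pi| \ge \tfrac{1}{2}\girth(\Gamma_j)$, while the part of $\partial \Pi$ shared with interior $2$-cells is a concatenation of pieces, each of length $< \tfrac{1}{6}\girth(\Gamma_j)$.

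The long outer arc of $\Pi$ is split between $\alpha$ and $\beta$, and I would rule out each possibility. A subarc along $\beta$ longer than half of $\partial \Pi$ could be replaced by the complementary, strictly shorter subarc of $\partial \Pi$, shortening $q$, which is impossible as $q$ is geodesic. A subarc along $\alpha$ is a path in $\Gamma_i$; if $\Gamma_j \ne \Gamma_i$ it appears both in $\Gamma_i$ and (via $\Pi$) in $\Gamma_j$ and is therefore a piece, forcing its length below $\tfrac{1}{6}\girth$, whereas if $\Gamma_j = \Gamma_i$ the identification exhibits a shortcut inside $\Gamma_i$ contradicting the geodesity of $p$. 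Since the outer arc exceeds $\tfrac{1}{2}\girth(\Gamma_j)$ but every admissible contribution is below $\tfrac{1}{6}\girth(\Gamma_j)$, the length estimates collide and $|q|=|p|$ follows, so $f_{v,g}$ is isometric. Convexity comes from the same setup applied to a geodesic bigon whose two sides $p$ (in the image) and $q$ (an arbitrary $X$-geodesic with the same endpoints, of equal length by isometry) bound a reduced diagram: any genuine deviation of $q$ from $f_{v,g}(\Gamma_i)$ again produces a shortenable outer arc or an oversized piece, forcing $q$ to lie in the image edge by edge.

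I expect the main obstacle to be the careful formulation and bookkeeping of the Greendlinger-type lemma in the graphical (rather than classical) setting, in particular verifying that a boundary arc shared between a relator component $\Gamma_j$ and the component $\Gamma_i$ carrying $\alpha$ genuinely qualifies as a piece, and handling the degenerate case $\Gamma_j = \Gamma_i$ where the relator and the image geodesic originate from the same graph. This is precisely the delicate point isolated in \cite{Gruber_Sisto}, and invoking their Lemma 2.15 lets one bypass re-deriving the diagrammatic estimates from scratch.
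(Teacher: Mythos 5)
The paper itself offers no argument for this lemma: it is stated as directly implied by \cite[Lemma 2.15]{Gruber_Sisto}, so your closing fallback---just invoke that lemma---coincides with the paper's entire ``proof.'' At that level the proposal is fine. But since you also sketch a standalone derivation, it should be said that the sketch has a genuine gap, located exactly at the point you yourself flag as delicate (the case $\Gamma_j = \Gamma_i$), and it is a quantitative failure rather than mere bookkeeping.

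In a reduced bigon diagram with sides $\alpha$ (labeled $\ell(p)$, $p$ geodesic in $\Gamma_i$) and $\beta$ (labeled $\ell(q)^{-1}$, $q$ geodesic in $X$), geodesity of $q$ only caps a $\beta$-arc of a cell $\Pi$ at $\frac{1}{2}\vert \partial \Pi \vert$ (an arc \emph{strictly} longer than half could be shortened; exactly half cannot), and geodesity of $p$ only rules out a same-component $\alpha$-arc exceeding $\frac{1}{2}\vert \partial \Pi \vert$. Now take an extremal cell of the shape-$I_1$ ladder: its boundary is an $\alpha$-arc, a $\beta$-arc, and one interior arc, which is a piece of length $< \frac{1}{6}\girth(\Gamma_j) \le \frac{1}{6}\vert \partial \Pi\vert$. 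Hence $\vert \alpha\text{-arc}\vert + \vert \beta\text{-arc}\vert > \frac{5}{6}\vert \partial \Pi \vert$, so the $\alpha$-arc has length $> \frac{1}{3}\vert \partial \Pi\vert > \frac{1}{6}\girth(\Gamma_j)$ and therefore cannot be a piece; you are forced into the same-component case with an $\alpha$-arc of length in $\bigl(\frac{1}{3}, \frac{1}{2}\bigr]\vert\partial\Pi\vert$, where \emph{neither} of your two contradictions (shortenable arc, oversized piece) applies. Your concluding claim that ``every admissible contribution is below $\frac{1}{6}\girth(\Gamma_j)$'' is thus false both for the $\beta$-side and for same-component $\alpha$-arcs, and the convexity paragraph inherits the same hole. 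The standard repair---and what the cited Lemma 2.15 of Gruber--Sisto actually does---is not a pure length collision but a minimality argument over the $\Gamma_i$-side: among all paths $p'$ in $\Gamma_i$ with the prescribed endpoints, choose one admitting a diagram with the fewest $2$-cells; a cell meeting $\alpha$ along a non-piece arc comes from $\Gamma_i$ via a label-preserving automorphism and can be absorbed by pushing $p'$ across it inside $\Gamma_i$, decreasing the cell count (note $p'$ may get \emph{longer}; one does not insist it stay geodesic). Once every $\alpha$-arc is a piece, the count $\frac{1}{2} + \frac{1}{6} + \frac{1}{6} + \frac{1}{6}$, strict in all but the first term, kills every cell; the resulting cell-free diagram gives $\ell(q) = \ell(p')$, and uniqueness of labeled paths in $X$ places $q$ inside $f_{v,g}(\Gamma_i)$. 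This yields isometry ($\vert q \vert = \vert p' \vert \ge d_{\Gamma_i}(a,b)$) and convexity simultaneously, instead of treating convexity by a separate bigon argument as you propose.
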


We call the embedded connected components $\Gamma_i$ in $X$ the \textbf{relators} in $X$.  A \textbf{contour} is a simple closed path in $X$ contained in a relator.

From now on, we let $G = G(\Gamma)$ be a $C'(1/6)$ graphical small cancellation group, corresponding to a labeled graph $\Gamma$ over a countable labeling set, with finite connected components $(\Gamma_n)_{n \in \N}$.

We will now define a hyperbolic Cayley graph of $G$ obtained by coning off relators in $X$. Below, for an element $g \in G$ and a word $w$ over the generators $S$ of $G$, we will denote $g =_G w$ to denote that $g$ is represented by $w$ in $G$. 

\begin{defn}
    \label{def:coneCay}
    Let $W = \{g \in G : g =_G \ell(p) \text{ for some path } p \subset \Gamma\}$ be the set of all group elements in $G$ represented by the label of a path in $\Gamma$. The Cayley graph $Y := \mathrm{Cay}(G, S \cup W)$ is called the \textbf{coned-off} Cayley graph of $G$. 
\end{defn}

Let us remark here that the term ``coned-off'' is not standard and we just use it for the current paper. Usually, coning-off is obtained by adding a new vertex -- the apex. We do not add any new vertices (but relators indeed give rise to cones over each of its vertices). 

By \cite[Theorem 3.1]{Gruber_Sisto}, for $C'(1/6)$ graphical small cancellation groups (over possibly infinite generating sets), the coned off Cayley graph $Y$ is always hyperbolic. We can think of $Y$ as being obtained from $X$ by replacing every relator $\Theta$ in $X$ by the complete subgraph on its vertices. This replacement yields a metric space that is quasi-isometric to the Cayley graph $\mathrm{Cay}(G, S \cup W)$.

Intuitively, the metric $d_Y$ on $Y$ counts how many relators any geodesic in $X$ between two given points passes through. Geodesic paths in $Y$ correspond to ``geodesic sequences'' of relators. 

\begin{defn}
    A sequence $\Theta_1,\ldots,\Theta_n$, where each $\Theta_i$ is either a relator or an edge of $X$ not occurring on relators, is \textbf{geodesic} if $\Theta_i \cap \Theta_{i+1} \neq \emptyset$ for each $i = 0,\ldots,n-1$ and there does not exist another such sequence $\Theta_1' = \Theta_1,\ldots, \Theta_k' = \Theta_n$ with $k < n$. 
\end{defn}

\begin{prop}(\cite[Proposition 3.6]{Gruber_Sisto})
    \label{Geod decomp}
    Let $x \neq y$ be vertices in $X$, and let $\gamma$ be a geodesic in $X$ from $x$ to $y$. Denote $k = d_Y(x,y)$. Then $k$ is the minimal number such that $\gamma = \gamma_{1} \cdots \gamma_{k}$, where each $\gamma_{i}$ is either a path in some relator in $X$ or an edge in $X$ not occurring on any relator. %single edge not labeled by an element of $S$ that does not occur on $\Gamma$ or such that $\ell(\gamma_{i})$ labels a subpath of $\Gamma$. 
\end{prop}

In the notation of Proposition \ref{Geod decomp}, denoting $p_-$ and $p_+$ the initial and terminal vertices of a path $p$, this means $((\gamma_{1})_-, (\gamma_{1})_+)((\gamma_{2})_-, (\gamma_{2})_+) \cdots ((\gamma_{k})_-, (\gamma_{k})_+)$ is a geodesic edge path in $Y$ from $x$ to $y$. 

Proposition \ref{Geod decomp} says that each geodesic path $\gamma_X \subset X$ between vertices $x$ and $y$ of $X$ can be covered by a geodesic sequence $(\Theta_i)_{i=1}^k$ of length $k = d_Y(x,y)$, where each $\Theta_i$ is either a relator or an edge in $X$ not occurring on $\Gamma$. Hence any such geodesic $\gamma_X$ in $X$ projects to a geodesic $\hat{\gamma_X} = e_1 \cdots e_k$ in $Y$ from $x$ to $y$ with $e_i$ having lifts in $X$ contained in $ \Theta_i$.

We record an elementary but useful property of relators that we will need in the sequel.

\begin{lem}
    \label{geodesic subpath}
    Let $p$ be a geodesic in $X$ and let $\Theta \subset X$ be a relator. Then $\Theta \cap p$ is a subpath of $p$.
\end{lem}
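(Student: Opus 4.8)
The plan is to reduce the statement entirely to the convexity of relators. Recall that the preceding lemma guarantees that every relator $\Theta$ is the image of an isometric embedding of some component $\Gamma_i$ and is a \emph{convex} subgraph of $X$; that is, for any two vertices $u, v \in \Theta$, every geodesic in $X$ joining $u$ and $v$ is contained in $\Theta$. This is the only structural input I expect to need.

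First I would record the elementary fact that every subpath of a geodesic is itself a geodesic: if $p$ runs from $x$ to $y$ and $u, v$ are two vertices lying on $p$, then the portion $p_{[u,v]}$ of $p$ between them realizes $d_X(u,v)$, since otherwise one could replace $p_{[u,v]}$ by a shorter path and contradict that $p$ is geodesic.

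Next, I would parametrize $p$ as an edge path $x = v_0, v_1, \dots, v_L = y$ and consider the index set $I = \{ i : v_i \in \Theta \}$ of vertices of $p$ lying on the relator. The crux is to show that $I$ is an interval of consecutive integers. Take $i < j$ in $I$; then $v_i, v_j \in \Theta$, and by the first step the subpath $p_{[v_i, v_j]}$ is a geodesic between two vertices of $\Theta$. Convexity then forces $p_{[v_i, v_j]} \subseteq \Theta$, so every intermediate vertex $v_k$ with $i \le k \le j$ lies in $\Theta$, and every intermediate edge of $p$ lies in $\Theta$ as well (each such edge is a length-one geodesic between adjacent vertices of $\Theta$, hence is contained in $\Theta$ by convexity). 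This shows $I$ is an interval.

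Finally, letting $a = \min I$ and $b = \max I$, the above yields $\Theta \cap p = p_{[v_a, v_b]}$, which is precisely a subpath of $p$, completing the argument. I do not anticipate a serious obstacle here: the entire content is the convexity of relators together with the triviality that subpaths of geodesics are geodesics. The only point requiring mild care is the bookkeeping for multi-edges — ensuring that when both endpoints of an edge of $p$ lie in $\Theta$ the edge itself does too — which is handled by applying convexity to that single edge viewed as a geodesic of length one.
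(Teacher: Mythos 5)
Your proof is correct and follows essentially the same route as the paper: both arguments rest on the convexity of relators (from \cite[Lemma 2.15]{Gruber_Sisto}) applied to the subsegment of $p$ between its first and last vertices in $\Theta$, with your version merely spelling out the interval-of-indices bookkeeping and the multi-edge caveat that the paper leaves implicit.
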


\begin{proof}
    Let $p_-$ be the first vertex of $p$ in $\Theta$ and let $p_+$ be the last vertex of $p$ in $\Theta$. By \cite[Lemma 2.15]{Gruber_Sisto}, we have that $\Theta$ is a convex subgraph of $X$, and hence the segment of $p$ between $p_-$ and $p_+$ is in $\Theta$. Thus, $\Theta \cap p$ is the subsegment of $p$ between $p_-$ and $p_+$. 
\end{proof}

\textcolor{black}{We will now discuss terminology associated with disk diagrams over graphical presentations, following \cite{Gruber_Sisto}. 
\textcolor{black}{A \textbf{van Kampen diagram} $\Delta $ over a graphical presentation coming from an $S$-labeled graph $\Gamma$ is a compact, contractible, planar 2-complex homeomorphic to a disk whose 1-cells are labeled by elements of $S$ and whose 2-cells have boundary labels that are the labels of simple closed paths in $\Gamma$. The 1-cells of a disk diagram $\Delta$ are called \textbf{edges} and the 2-cells of $\Delta$ are called \textbf{faces}. We will assume that all of our van Kampen diagrams are \emph{reduced} in the sense of \cite{Gruber_Sisto}}. Note that minimal diagrams as in \cite[Definition 1.21]{GruberPhDThesis} are reduced.} %A path in a disk diagram $\Delta$ is \textbf{interior} if it has an edge which is not on the topological boundary of $\Delta$ in $\R^2$.} 

%\textcolor{black}{Now consider a van Kampen diagram $\Delta$ over a graphical presentation from an $S$-labeled graph $\Gamma$. Let $p$ be an edge path of $\Delta$ which is at the intersection of two 2-cells $\Pi$ and $\Pi'$ of $\Delta$ (so $p \subset \partial \Pi \cap \partial \Pi'$). By definition of a van Kampen diagram, each 2-cell $\Pi$ and $\Pi'$ induces a lift of $p$ to $\Gamma$ from lifts of $\partial \Pi$ and $\partial \Pi'$ to $\Gamma$. We say that $p$ \textbf{originates} from $\Gamma$ if there exist 2-cells $\Pi$ and $\Pi'$ of $\Delta$ which induce the same lift of $p$ to $\Gamma$. A van Kampen diagram $\Delta$ is called \textbf{reduced} if no edge path in $\Delta$ originates from $\Gamma$. We can always assume our van Kampen diagrams are reduced by merging any 2-cells $\Pi$ and $\Pi'$ such that $\partial \Pi \cap \partial \Pi'$ originates from $\Gamma$ into a single 2-cell obtained by gluing $\Pi$ and $\Pi'$ together along $\partial \Pi \cap \partial \Pi'$}. 

The following classification of simple geodesic bigons and triangles in Cayley graphs of small cancellation groups due to Strebel will be used several times throughout this paper. Note that the classification below is stated for \emph{classical} small cancellation groups, however, it applies equally well to our setting of graphical small cancellation groups, see \cite[Remark 3.11]{GruberPhDThesis}.

\begin{thm}(\cite[Theorem 43]{MR1086661})
    \label{thm: Strebel's bigon and triangle classification}
    Let $G = G(\Gamma)$ be a group defined by a $C'(\lambda)$ labeled graph $\Gamma$ for $\lambda \leq 1/6$ and $X$ its Cayley graph. Let $\Delta$ be a reduced van Kampen diagram over the graphical presentation $G(\Gamma)$. 

    \begin{enumerate}
        \item If $\Delta$ is the van Kampen diagram of a simple geodesic bigon with distinct vertices in $X$ and if $\Delta$ has more than one 2-cell, then $\Delta$ has shape $I_1$ as in Figure \ref{fig:shape I_1 diagram}.
        \item If $\Delta$ is the van Kampen diagram of a simple geodesic triangle with three distinct vertices in $X$ and if $\Delta$ has more than one 2-cell, then $\Delta$ has one of the forms shown in Figure \ref{fig:triangle classification shapes}.
    \end{enumerate}
\end{thm}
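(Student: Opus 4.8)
The plan is to treat this as a statement purely about the combinatorics of reduced van Kampen diagrams and to run a combinatorial Gauss--Bonnet (curvature) argument, which is the cleanest modern route to Strebel's classification. Because the statement is quoted verbatim from \cite{MR1086661} and transfers to the graphical setting by \cite[Remark 3.11]{GruberPhDThesis}, the honest option in the paper is simply to cite it; but to reconstruct a proof I would proceed as follows. Let $\Delta$ be a reduced disk diagram whose boundary is a simple geodesic bigon (resp.\ triangle) with distinct vertices, and assume $\Delta$ has at least two $2$-cells. First I would record the two structural inputs. (i) Reducedness forces every interior vertex to have degree $\geq 3$ and every interior edge to be a \emph{piece}, so by the $C'(\lambda)$ hypothesis with $\lambda \le 1/6$ each interior arc between two branch vertices is shorter than $\lambda \cdot \girth$ of the relevant component. (ii) The geodesic hypothesis on each side means that the arc a single $2$-cell $f$ contributes to one side has length at most $\tfrac{1}{2}|\partial f|$: otherwise one could replace that arc by the complementary arc of $\partial f$ (a path in $\Delta$ with the same endpoints, hence projecting to a path in $X$ between the same two vertices) and strictly shorten the geodesic side, a contradiction.

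Next I would set up combinatorial Gauss--Bonnet: assigning an angle to each corner and defining curvatures $\kappa(f)$ of faces and $\kappa(v)$ of vertices so that $\sum_f \kappa(f) + \sum_v \kappa(v) = 2\pi\chi(\Delta) = 2\pi$. With a standard angle assignment, input (i) makes every interior vertex carry nonpositive curvature, so all positive curvature is forced to the finitely many boundary corners (the two tips of the bigon, the three corners of the triangle) and into the faces. The $C'(1/6)$ bound then caps the positive curvature a face can carry in terms of the number of its arcs lying on $\partial\Delta$, while input (ii) caps the contribution coming from each geodesic side. An alternative, more classical route would replace the curvature bookkeeping by Greendlinger's lemma, which supplies boundary faces meeting $\partial\Delta$ in a large exterior arc and lets one peel the diagram one shell at a time.

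The heart of the argument is the resulting bookkeeping: only $2\pi$ of positive curvature is available, so each interior face and each high-valence interior vertex must be offset, and balancing these forces a very rigid global shape. For the bigon this pins the faces into a single strip with no interior vertices, yielding shape $I_1$; for the triangle it forces one of the finitely many admissible assemblies in Figure \ref{fig:triangle classification shapes}. I expect the main obstacle to be exactly this exhaustive verification: converting the ``geodesic side'' condition of (ii) into a sharp combinatorial inequality on each face's boundary arcs, correctly handling the tip vertices where the turning is concentrated, and then eliminating every configuration except those listed. The degenerate cases---a face meeting $\partial\Delta$ in more than one arc, or two distinct sides sharing a single face near a tip---are the fiddly configurations that must each be excluded by hand.
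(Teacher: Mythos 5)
The paper does not prove this theorem at all: it is imported verbatim from Strebel's appendix (\cite[Theorem 43]{MR1086661}), and the only content the paper adds is the remark that the classification applies to diagrams over graphical presentations, citing \cite[Remark 3.11]{GruberPhDThesis}. You recognized this, and your reconstruction does follow the genuine route of Strebel's argument: the two structural inputs you isolate --- interior arcs are pieces, hence of length less than $\lambda$ times the girth of the relevant component, and a geodesic side meets each face in an arc of length at most half the face's perimeter, by the complementary-arc exchange --- are exactly Strebel's, and a Lyndon--Schupp style curvature/Euler-characteristic count is indeed how the positive curvature gets concentrated at the two (resp.\ three) distinguished boundary vertices.

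That said, two caveats, the first of which is a genuine gap if your text is read as a proof rather than a plan. First, your sketch stops where Strebel's work begins: the ``resulting bookkeeping'' --- showing that the only way to balance curvature is a single strip of faces for a bigon (shape $I_1$ of Figure \ref{fig:shape I_1 diagram}) and one of the finitely many assemblies of Figure \ref{fig:triangle classification shapes} for a triangle --- is the entire content of the theorem and occupies most of Strebel's appendix. The delicate configurations you flag at the end (a face meeting $\partial\Delta$ in several arcs, faces spanning two sides near a tip) are not edge cases to be ``excluded by hand'' after the main argument; eliminating or admitting them \emph{is} the main argument, and your proposal asserts the outcome rather than deriving it. Second, your input (i) is not quite correct in the graphical setting: reducedness of a diagram over a graphical presentation does not by itself force interior arcs to be pieces, since two adjacent faces may map to the same component of $\Gamma$ compatibly along their common arc, in which case that arc is not a piece. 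The remedy --- replacing the diagram by one with the same boundary word in which such face pairs are merged, so that interior arcs become pieces --- is precisely the content of the graphical-diagram lemmas behind \cite[Remark 3.11]{GruberPhDThesis}, so you should anchor input (i) there explicitly rather than presenting it as a formal consequence of reducedness; this is also the step the paper silently relies on when it passes to minimal-area diagrams in the proof of Lemma \ref{lemma: geodesic representatives for boundary}.
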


\begin{figure}[H]
    \centering
    \includegraphics[width=0.5\linewidth]{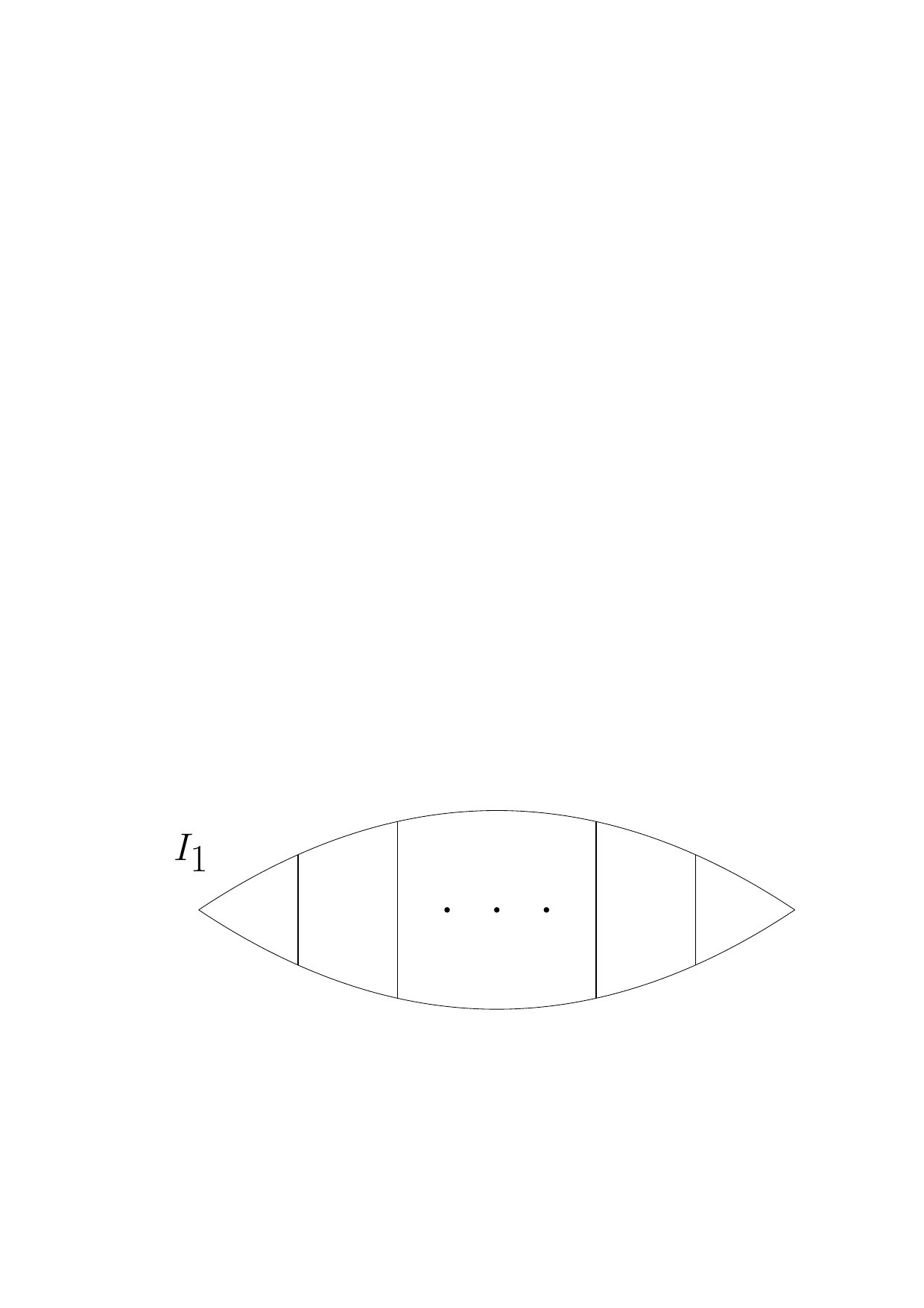}
    \caption{The shape of a van Kampen diagram bounding a simple geodesic bigon (shape $I_1$ in the terminology of \cite{MR1086661}).}
    \label{fig:shape I_1 diagram}
\end{figure}

\begin{figure}[h]
    \centering
    \includegraphics[width=0.7\linewidth]{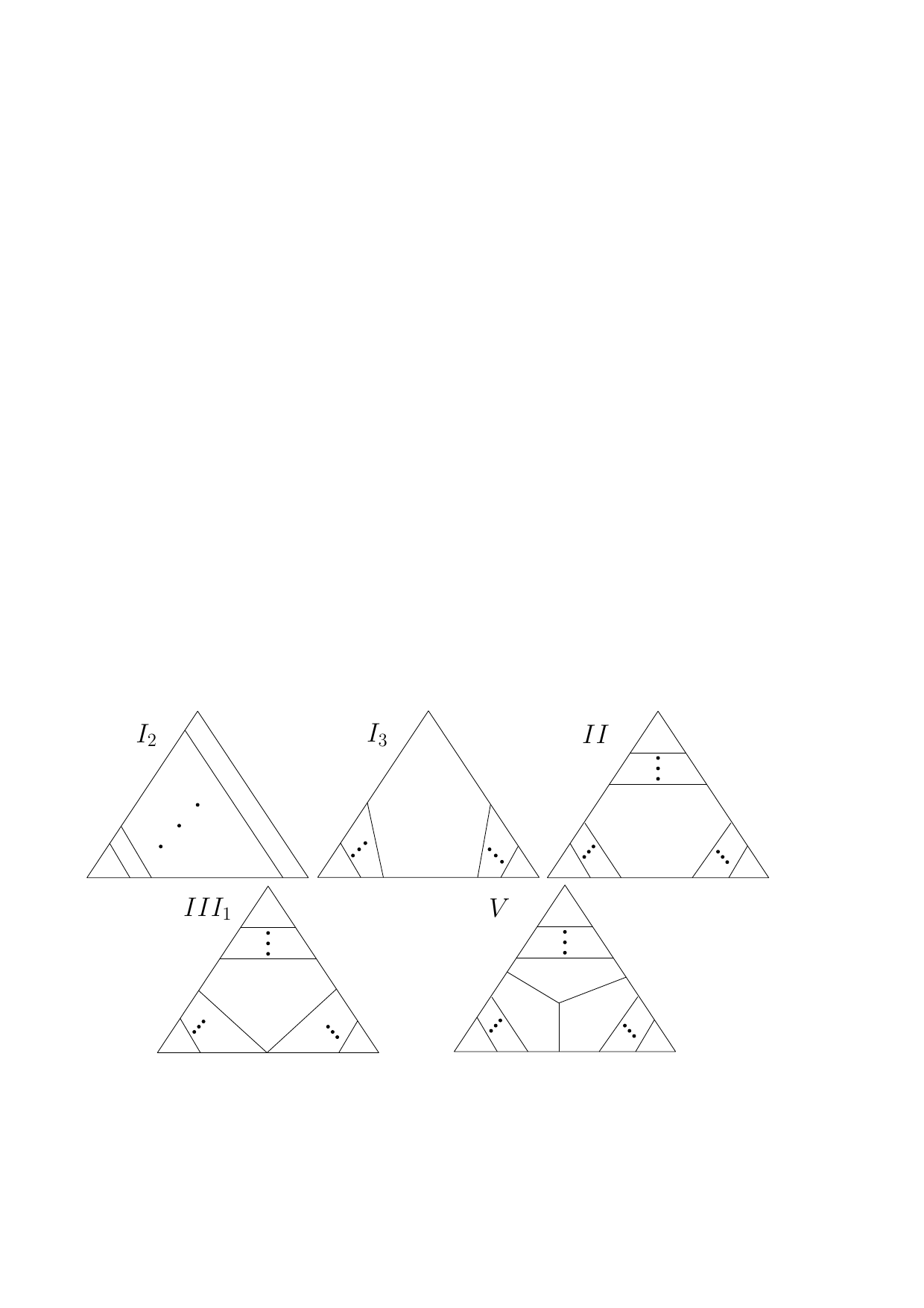}
    \caption{Possible shapes of van Kampen diagrams bounded by simple geodesic triangles (using the terminology of \cite{MR1086661}).}
    \label{fig:triangle classification shapes}
\end{figure}

\section{Hyperfiniteness of the action on the coned-off Cayley graph boundary}

\subsection{Geodesic representatives of coned-off Cayley graph boundary points}

Fix a graphical $C'(\lambda)$ small cancellation presentation $G = \langle S \cup S^{-1} \vert \Gamma_n : n \in \N \rangle$ with $S$ countable and each $\Gamma_n$ a finite connected graph, where $\lambda$ satisfies $\frac12 - 2 \lambda \geq 3 \lambda$ (i.e.\ $\lambda \leq \frac1{10})$. Denote $\Gamma = \coprod_{n \in \N} \Gamma_n$. We will assume that there is no label-preserving graph isomorphism of $\Gamma$ mapping one component $\Gamma_n$ to another $\Gamma_m$. If this were the case, then we could remove $\Gamma_m$ from $\Gamma$ and this would not change the graphical presentation.  

Let $X := \mathrm{Cay}(G,S)$ and let $Y = \mathrm{Cay}(G, S \cup W)$, where $W$ is the set of all words that can be read on paths in $\Gamma$. Recall that $Y$ is hyperbolic by \cite[Theorem 3.1]{Gruber_Sisto}, and thus by \cite[Lemma 4.1]{MR4715159}, $\partial Y$ is a Polish space. Denote $d_X$ (respectively, $d_Y$), the graph metric on $X$ (respectively, $Y$). For an edge path $p$ without self-intersections and vertices $a,b$ on $p$, we will denote $p_{[a,b]}$ the subsegment of $p$ between $a$ and $b$. Also, for an infinite edge path $p$ without self-intersections and a vertex $a \in p$, we will denote $p_{[a, \infty)}$ the terminal subray of $p$ starting at the vertex $a$.

In this section, we prove the Main Theorem.
%that under the assumption that the graph $\Gamma$ satisfies a condition called \emph{extreme fineness}, the action $G \curvearrowright \partial Y$ is hyperfinite. 
%\begin{thm}
%    \label{hyperfiniteness theorem}
%    If $\Gamma$ is extremely fine (see Definition \ref{def: extremely fine}), then the action of $G$ on $\partial Y$ induced by the natural action $G \curvearrowright Y$ is hyperfinite.
%\end{thm}
Its proof rests on the following key lemmas (Lemma \ref{lem:geod ray converging in Y} and Lemma \ref{lemma: geodesic representatives for boundary}), which assert that points in $\partial Y$ can be represented by geodesic rays in $X$. Note that in the following lemmas, we do not need to assume that $\Gamma$ is extremely fine (Definition~\ref{def: extremely fine}).  We will only need extreme fineness in the proof of Proposition \ref{prop: finite index}. 

{\color{black} We start by introducing nice geodesic rays in $X$ that represent points in $\partial Y$.
\begin{lem}\label{lem:geod ray converging in Y}
    Let $p=(p(0),p(1),p(2),\ldots)$ be a geodesic ray in $X$, where $p(i)$'s are vertices in $X$ composing $p$. The following conditions (1)-(3) are equivalent.
    \begin{itemize}
        \item[(1)]
        There exists $\xi \in \partial Y$ such that the sequence $(p(n))_{n \in \N}$ converges to $\xi$.
        \item[(2)]
        $\sup_{n \in \N}d_Y(p(0),p(n))= \infty$.
        \item[(3)] 
        There exist $\xi \in \partial Y$ and a sequence of vertices $(a_n)_{n \in \N}$ on $p$ such that (a) and (b) below are satisfied.
    
    \begin{itemize}
        \item[(a)] The subpath of $p$ defined by $r_i = p_{[a_{i-1}, a_i]}$ is either an edge in $X$ not appearing in any relator $\Theta$ or a subpath of $p$ contained in a relator (i.e.\ $r_i$ projects to an edge in $Y$).
        \item[(b)] $a_n \to \xi$ in $\partial Y$ and $(a_n)_n$ defines a geodesic ray in $Y$, i.e.\ $d_Y(a_i, a_j) = \vert i - j \vert$ for all $i,j \in \N$. %the sequence $(\hat{r_i})_{i \in \N}$ of edges in $Y$ defines a geodesic ray in $Y$. 
    \end{itemize}
    \end{itemize}
\end{lem}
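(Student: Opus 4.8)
The plan is to prove the cyclic chain of implications $(1)\Rightarrow(2)\Rightarrow(3)\Rightarrow(1)$, isolating essentially all of the geometric content in the step $(2)\Rightarrow(3)$. Two of the three implications should be formal. For $(1)\Rightarrow(2)$ I would argue by contraposition: if $\sup_n d_Y(p(0),p(n))=M<\infty$, then the Gromov product in $Y$ satisfies $(p(i),p(j))_{p(0)}\le \tfrac12\bigl(d_Y(p(0),p(i))+d_Y(p(0),p(j))\bigr)\le M$ for all $i,j$, so $(p(n))_n$ cannot converge to infinity in $Y$ and hence cannot converge to any $\xi\in\partial Y$. For $(3)\Rightarrow(1)$, the key observation is that every vertex $p(m)$ lying on $r_i=p_{[a_{i-1},a_i]}$ is contained, together with $a_{i-1}$, in a single relator or single edge, so $d_Y(p(m),a_{i-1})\le 1$; since the breakpoint indices tend to infinity, $p(m)$ stays within uniformly bounded $d_Y$-distance of the sequence $(a_n)$, which by hypothesis converges to $\xi$. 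The standard consequence of Proposition \ref{prop:hyp sp}(2)—that two sequences at bounded distance converge to the same boundary point—then gives $p(n)\to\xi$.

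The heart of the argument is $(2)\Rightarrow(3)$, where I would build the breakpoints greedily. Set $a_0=p(0)$, and having chosen $a_{i-1}=p(n_{i-1})$, let $a_i=p(n_i)$ with $n_i=\max\{m\ge n_{i-1}: d_Y(a_{i-1},p(m))\le 1\}$; that is, $a_i$ is the farthest vertex of $p$ still within one $Y$-edge of $a_{i-1}$. Whenever $d_Y(a_{i-1},p(m))\le 1$ with $p(m)\ne a_{i-1}$, the realizing $Y$-edge is either a generator edge, forcing $m=n_{i-1}+1$ so that $r_i$ is a single $X$-edge, or a coning edge, in which case $a_{i-1}$ and $p(m)$ lie in a common relator $\Theta$ and convexity of $\Theta$ (\cite[Lemma 2.15]{Gruber_Sisto}) forces the whole geodesic subpath $p_{[a_{i-1},p(m)]}$ into $\Theta$. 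In either case the resulting $r_i$ automatically satisfies condition (a) and $d_Y(a_{i-1},a_i)=1$.

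The first real obstacle is well-definedness: I must rule out that $A_i=\{m\ge n_{i-1}:d_Y(a_{i-1},p(m))\le 1\}$ is unbounded, and this is exactly where condition (2) is used. If $A_i$ were unbounded, then relators containing $a_{i-1}$ would meet $p$ arbitrarily far out; using Lemma \ref{geodesic subpath} (each relator meets $p$ in a subpath) together with convexity, I would show that for \emph{every} $m\ge n_{i-1}$ there is a relator containing both $a_{i-1}$ and $p(m)$, whence $d_Y(a_{i-1},p(m))\le 1$ for all such $m$. This bounds $d_Y(p(0),p(m))\le d_Y(p(0),a_{i-1})+1$ uniformly, contradicting $(2)$. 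Hence each $A_i$ is finite and the greedy construction proceeds with $n_0<n_1<\cdots$.

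The second obstacle is optimality: I must show $(a_n)$ is a genuine $Y$-geodesic ray, i.e. $d_Y(a_0,a_n)=n$ (the inequality $\le n$ being clear from the edge path $a_0a_1\cdots a_n$). Here I would compare the greedy decomposition with a \emph{minimal} one from Lemma \ref{Geod decomp}: writing $p_{[p(0),a_n]}=\gamma_1\cdots\gamma_k$ with $k=d_Y(p(0),a_n)$ and breakpoints $b_0,\dots,b_k$, an induction shows $b_j\le a_j$ along $p$, the inductive step using that $p_{[a_{j-1},b_j]}\subseteq\gamma_j$ lies in a single relator or edge together with the maximality defining $a_j$. Then $a_n=b_k\le a_k$ forces $k\ge n$, so $k=n$. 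Finally, any $Y$-geodesic ray satisfies $(a_i,a_j)_{a_0}=\min(i,j)\to\infty$, so $(a_n)$ converges to some $\xi\in\partial Y$, completing (3). The two obstacles above are the only delicate points, and I expect the well-definedness step—controlling the possible clustering of relators around $a_{i-1}$ via convexity and condition (2)—to be the main one.
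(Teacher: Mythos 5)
Your proposal is correct, but for the main implication $(2)\Rightarrow(3)$ it takes a genuinely different route from the paper. The paper applies Lemma \ref{Geod decomp} to each finite segment $p_{[p(0),p(n)]}$, obtaining for every $n$ a $Y$-geodesic sequence of breakpoints $(a_{n,i})_{i=0}^{k_n}$; it then shows that for each fixed $i$ the set $\{a_{n,i} \mid n\in\N\}$ is trapped in the finite segment $p_{[p(0),a_{n_0,i+1}]}$ for a suitable $n_0$ (otherwise one would get $d_Y(p(0),a_{n,i+1})\le i$, contradicting $d_Y(p(0),a_{n,i+1})=i+1$), and extracts by a diagonal argument a subsequence along which the breakpoints stabilize; the limiting sequence inherits property (a) and $Y$-geodesicity directly from the finite stages, so the paper never needs a separate optimality argument. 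You instead build the breakpoints greedily as maximal vertices within $d_Y$-distance $1$, which forces exactly the two verifications you identify, and both of your arguments go through: finiteness of $A_i$ holds because $d_Y(a_{i-1},p(m'))\le 1$ for large $m'$ forces a common relator (an element of $W$ is by definition the label of a path in some $\Gamma_j$, so the embedding $f_{v,g}$ of \cite[Lemma 2.15]{Gruber_Sisto} places both endpoints in one relator), whose convexity (as in Lemma \ref{geodesic subpath}) absorbs every intermediate vertex $p(m)$ and so contradicts (2); and your comparison induction $b_j\le a_j$ against a minimal decomposition from Lemma \ref{Geod decomp} correctly yields $d_Y(p(0),a_n)=n$, since the relator-or-edge dichotomy for $\gamma_j$ together with maximality of $a_j$ pushes $b_j$ behind $a_j$, and $b_k=a_n$ then forces $k\ge n$. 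Your route buys a canonical, subsequence-free (indeed maximal) decomposition at the cost of these two inductions, whereas the paper outsources both to Lemma \ref{Geod decomp} plus a compactness/diagonalization template that it reuses later; it is worth noting that a greedy maximality construction essentially identical to yours does appear in the paper, in the proof of Lemma \ref{lemma: geodesic representatives for boundary}, where a sequence $(a_n)$ is upgraded to a $Y$-geodesic ray. Your implications $(1)\Rightarrow(2)$ and $(3)\Rightarrow(1)$ coincide with the paper's (the paper measures $d_Y(v,a_i)\le 1$ from the terminal rather than initial breakpoint of $r_i$, which is immaterial).
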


\begin{proof}
    $\underline{(3)\Rightarrow(1)}$ We have $\lim_{n \to \infty}p(n) = \xi$ by $\lim_{i \to \infty}a_i = \xi$ and $\forall\,i \in \N,\,\forall\, v \in p_{[a_{i-1}, a_i]},\,d_Y(v,a_i) \le 1$.
    
    $\underline{(1)\Rightarrow(2)}$ This follows by $\lim_{n \to \N}d_Y(p(0),p(n))= \infty$.
    
    $\underline{(2)\Rightarrow(3)}$ Note that $\{d_Y(p(0),p(n))\}_{n \in \N}$ is non-decreasing by Proposition \ref{Geod decomp}, hence, $\sup_{n \in \N}d_Y(p(0),p(n))= \infty \iff \lim_{n \to \N}d_Y(p(0),p(n))= \infty$. By Proposition \ref{Geod decomp}, for each $n \in \N$, there exists a sequence of vertices $(a_{n,i})_{i = 0}^{k_n}$ on $p_{[p(0),p(n)]}$, where we have $k_n = d_Y(p(0),p(n))$, $a_{n,0}=p(0)$, and $a_{n,k_n}=p(n)$, such that the subpath $p_{[a_{n,i-1}, a_{n,i}]}$ is either an edge in $X$ not appearing in any relator $\Theta$ or a subpath of $p$ contained in a relator. The sequence $(a_{n,i})_{i = 0}^{k_n}$ is a geodesic path in $Y$. Hence, for any $i \in \N$ and any $m,n\in \N$ with $\min\{d_Y(p(0),p(n)), d_Y(p(0),p(m))\} \ge i+1$, we have $a_{m,i} \in p_{[p(0),a_{n,i+1}]}$. Indeed, $a_{m,i} \notin p_{[p(0),a_{n,i+1}]}$ implies $d_Y(p(0),a_{n,i+1}) \le i$, which contradicts $d_Y(p(0),a_{n,i+1}) = i+1$. 
    
    Thus, $\#\{a_{n,i} \mid n \in \N\} < \infty$ for any $i \in \N$. By taking subsequences and diagonal argument, there exist a sequence of vertices $(a_i)_{i \in \N}$ on $p$ and a subsequence $(n_j)_{j \in \N} \subset \N$ such that for any $i,j \in \N$ with $i \le j$, we have $a_{n_j,\,i} = a_i$. Hence, the sequence $(a_i)_{i \in \N}$ satisfies condition (a) and is a geodesic ray in $Y$. This implies condition (b) as well.
\end{proof}

\begin{defn}
    Given $\xi \in \partial Y$, we will say that a geodesic ray $p=(p(n))_{n = 0}^\infty$ in $X$ satisfying $\lim_{n \to \infty} p(n) = \xi$ \emph{represents} $\xi$ in $Y$. We will denote $G(\xi)$ the set of all geodesic rays in $X$ from $1 \in G$ representing $\xi$. We have a natural injective map $G(\xi) \to S^{\N}$ defined by sending a geodesic ray $p \in G(\xi)$ to its label $\mathrm{lab}(p) = (p(n-1)^{-1}p(n))_{n \in \N} \in S^{\N}$. Equipping $S$ with the discrete topology and $S^{\N}$ with the product topology, this induces a topology on $G(\xi)$ as a subspace of $S^{\N}$.
\end{defn}
}

\begin{lem}
    \label{lemma: geodesic representatives for boundary}
    For any $\xi \in \partial Y$, we have $G(\xi) \neq \emptyset$.
    
\end{lem}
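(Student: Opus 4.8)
The plan is to produce the desired ray as a limit of geodesics in the locally finite graph $X$ and then verify, via Lemma \ref{lem:geod ray converging in Y}, that it represents $\xi$. Start by choosing a sequence of vertices $(y_n)_{n \in \N}$ in $Y$ (i.e.\ group elements) with $y_n \to \xi$; since $\xi \in \partial Y$ we may assume $d_Y(1, y_n) \to \infty$. For each $n$ pick a geodesic $\gamma_n$ in $X$ from $1$ to $y_n$. Because $S$ is finite, $X$ is locally finite, so each vertex of $X$ has finitely many neighbours; a standard König's lemma / diagonal extraction then yields a subsequence along which the $\gamma_n$ stabilize on longer and longer initial segments and converge to a geodesic ray $p = (p(k))_{k \ge 0}$ in $X$ based at $1$. (The ray is genuinely infinite because $d_X(1,y_n) \ge d_Y(1,y_n) \to \infty$.) For each fixed $k$, we then have $p(k) = \gamma_n(k)$ for all large $n$ in the subsequence.

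The crux is to show that $p$ satisfies condition (2) of Lemma \ref{lem:geod ray converging in Y}, i.e.\ $\sup_{k} d_Y(1, p(k)) = \infty$. By Lemma \ref{Geod decomp} the sequence $d_Y(1, p(k))$ is non-decreasing, so if it failed to be unbounded it would be eventually constant, equal to some $M'$. The plan is to derive a contradiction from this. Using Lemma \ref{Geod decomp} to cover the $X$-geodesic prefix $p_{[1,p(k)]}$ by $M'$ pieces (each either a lone edge or a subpath of a single relator) together with the monotonicity of $d_Y(1,\cdot)$ along $p$, one shows that for all sufficiently large $k$ the vertex $p(k)$ projects into the \emph{last} piece of this covering; hence, for $L_0$ large and every $L > L_0$, the segment $p_{[p(L_0), p(L)]}$ lies in a single relator. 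By Lemma \ref{geodesic subpath} each relator meets $p$ in a connected subpath, and the $C'(1/10)$ condition forces distinct relators to overlap only in short pieces, so these nested segments must all lie in one fixed relator $\Theta$. But every relator is finite, while $\{p(L) : L \ge L_0\}$ is an infinite set of distinct vertices of the geodesic ray $p$ — a contradiction. This establishes condition (2), and by the implication (2)$\Rightarrow$(1) of Lemma \ref{lem:geod ray converging in Y}, $p$ converges to some $\xi' \in \partial Y$.

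It remains to identify $\xi' = \xi$, which I would do with a Gromov-product estimate in $Y$. Fix $k$; for large $n$ the vertex $p(k) = \gamma_n(k)$ lies on the $X$-geodesic $\gamma_n$ from $1$ to $y_n$, whose $Y$-projection is a $Y$-geodesic of length $d_Y(1,y_n)$ (Lemma \ref{Geod decomp}). Since $p(k)$ then projects to within uniformly bounded $Y$-distance of a point on that $Y$-geodesic, one gets $(p(k), y_n)_1^Y \ge d_Y(1, p(k)) - C$ for a constant $C$ depending only on $\delta$. As $d_Y(1, p(k)) \to \infty$ by the previous paragraph, combining this with the hyperbolic inequality $(p(k'), y_n)_1^Y \ge \min\{(p(k'), p(k))_1^Y, (p(k), y_n)_1^Y\} - \delta$ and the fact that both $(p(k))_k$ and $(y_n)_n$ converge to infinity yields $\liminf_{k,n}(p(k), y_n)_1^Y = \infty$. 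Hence the sequences $(p(k))_k$ and $(y_n)_n$ are equivalent, so $\xi' = \xi$ and $p \in G(\xi)$, proving $G(\xi) \neq \emptyset$.

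I expect the main obstacle to be the middle step — ruling out that the extracted ray $p$ remains $Y$-bounded. The delicate points there are that the minimal relator/edge covering of the prefixes need not be canonical, and that relators of large girth can overlap along genuinely long subpaths; making precise the claim that a tail of $p$ is trapped in a single finite relator is where the small cancellation hypothesis and the finiteness of the components $\Gamma_n$ both have to be used carefully.
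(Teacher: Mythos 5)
There is a genuine gap at the step you yourself flag as the crux: ruling out that the extracted ray stays $Y$-bounded. Your contradiction argument claims that if $d_Y(1,p(k))$ is eventually constant, then a tail of $p$ is trapped in a single fixed relator, because distinct relators ``overlap only in short pieces.'' But the $C'(\lambda)$ condition only bounds a piece contained in a component $\Gamma_i$ by $\lambda\,\girth(\Gamma_i)$ --- a bound proportional to the girth of the \emph{varying} component, not a uniform constant. Since $\Gamma$ has infinitely many finite components of (typically) unbounded girth, one can have a sequence of pairwise distinct relators $\Theta^1,\Theta^2,\dots$ with $p_{[1,p(n_k)]}\subset\Theta^k$ and $n_k\to\infty$: each overlap $p_{[1,p(n_k)]}\subset\Theta^k\cap\Theta^{k+1}$ is a perfectly legitimate piece as long as $n_k<\lambda\,\girth(\Theta^k)$, which is consistent when girths grow. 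Concretely, in a classical $C'(1/10)$ presentation one can choose relator words $r_k$ of length $100\,n_k$ each containing the length-$n_k$ prefix of a fixed infinite word $w$; prefixes of relators shorter than half the girth are geodesic, so $w$ traces a genuine geodesic ray in $X$ with $d_Y(1,w(n))\le 1$ for all $n$. So $Y$-bounded geodesic rays in $X$ exist, your claimed contradiction is false, and --- worse --- the naive K\"onig extraction can actually converge to such a parasitic ray (geodesics $\gamma_n$ to $y_n$ may follow $w$ for longer and longer before veering off toward $\xi$), in which case condition (2) simply fails for the limit and no after-the-fact verification can succeed. The strategy of ``extract any limit, then verify it makes $Y$-progress'' is therefore unsalvageable as stated, not merely incomplete.

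This is exactly the difficulty the paper's proof is organized around, and it is worth seeing how it is evaded there: instead of extracting an arbitrary limit, one first proves that for each $n$ the set $\mathcal{A}_n$ of vertices $v$ on the geodesics $p_j$ with $d_Y(1,v)=n$ is finite, by trapping $\mathcal{A}_n$ in $\Theta_{n-1}\cup\Theta_n\cup\Theta_{n+1}$ for the covering of a single fixed geodesic $p_i$; this uses Strebel's classification of simple geodesic bigons and triangles (Theorem \ref{thm: Strebel's bigon and triangle classification}) together with the quantitative estimate $\vert r\cap p_i\vert>(\tfrac12-2\lambda)\vert r\vert\ge 3\lambda\,\girth(\Theta)$, which is where $\lambda\le\tfrac1{10}$ enters. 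One then diagonalizes to choose vertices $a_n\in\mathcal{A}_n$ with $d_Y(1,a_n)=n$ through which infinitely many $p_j$ pass, and takes the limit ray \emph{through the $a_n$}, so that $Y$-unboundedness is built in rather than verified afterwards; the identification of the limit boundary point with $\xi$ then goes through a Gromov-product computation $(a_n,x_{j_n})_1\ge n-1$ close in spirit to your third paragraph. Your first and third steps are essentially sound in outline, but the middle step needs this pre-selection mechanism, and with it the whole extraction has to be restructured.
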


\begin{proof}
    Let $(x_n)_n$ be a sequence of elements of $G$ representing $\xi$. For each $n$, let $p_n$ be a geodesic segment in $X$ from 1 to $x_n$. {\color{black} In the following, the Gromov product (see Definition \ref{def:Gromov product}) is always in $Y$.}

    Fix $n \in \N$ and a hyperbolicity constant $\delta$ for $Y$. Choose $i \in \N$ sufficiently large such that for all $j > i$, we have $(x_i, x_j)_1 > n + \delta + 2$ (such $i$ exists since $(x_n)_n$ converges to infinity). 

    We will show that the set $\mathcal{A}_n = \{v \in p_j : d_Y(1,v) = n \text{ and } j > i\}$ is finite. Note indeed that for each $j > i$, there exists a vertex $v \in p_j$ such that $d_Y(1,v) = n$, since by Proposition \ref{Geod decomp}, $p_j$ can be decomposed as $p_j = p_{j,1} p_{j,2} \cdots p_{j,k}$, where $k = d_Y(1,x_j) \geq (x_i, x_j)_1 > n$ and each $p_{j,m}$ projects to an edge in $Y$, so there exists a vertex $v \in p_{j,n-1} \cup p_{j,n} \cup p_{j, n+1}$ with $d_Y(1,v) = n$. 

    Let $p_i = p_{i,1} p_{i,2} \cdots p_{i,l}$ be a decomposition of $p_i$ as in Proposition \ref{Geod decomp} with each $p_{i,k}$ either an edge in $X$ that does not occur in $\Gamma$ or a subpath of $p_i$ contained in a relator. For each \textcolor{black}{$k = 1,\ldots,l$}, denote $\Theta_k$ a relator containing $p_{i,k}$ if such a relator exists or $\Theta_k = p_{i,k}$ otherwise. 

    To prove that $\mathcal{A}_n$ is finite, we will show that $\mathcal{A}_n \subseteq \Theta_{n-1} \cup \Theta_n \cup \Theta_{n+1}$, which will imply that $\mathcal{A}_n$ is finite since each $\Theta_k$ is a finite graph.

    Let $j > i$. Choose vertices $a \in p_i$ and $b \in p_j$ such that $d_Y(1,a) = d_Y(1,b) = (x_i,x_j)_1$. Note that such vertices $a,b$ exist since $(x_i,x_j)_1 \leq \min\{d_Y(1,x_i), d_Y(1,x_j)\}$. 
    
    Then, by Theorem \ref{thm: Strebel's bigon and triangle classification}, the vertices $1,a,b$ in $X$ form the geodesic triangle in $X$ as in Figure \ref{geodesic triangles and bigons}, which is composed of bigonal and triangular diagrams filled by contours. \textcolor{black}{As discussed in Section \ref{section: graphical small cancellation}, we can assume that each of the bigonal and triangular diagrams are reduced.}

    \begin{figure}[h]
        \centering
        \includegraphics[width=0.45\linewidth]{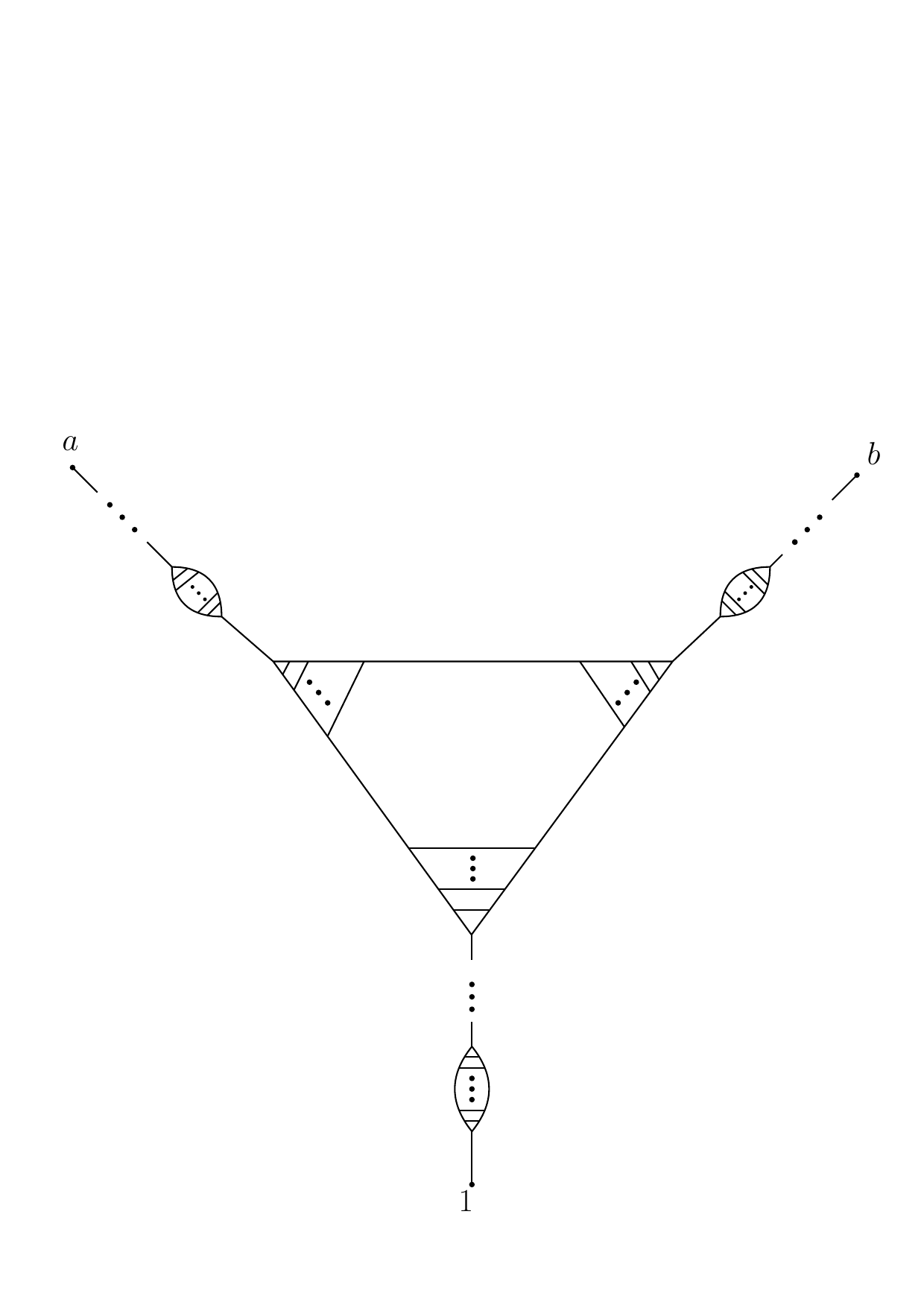}
        \caption{The simple geodesic triangle and bigons appearing in the proof of Lemma \ref{lemma: geodesic representatives for boundary}. The segments of $p_i,p_j$ between $1,a$ and $1,b$ as well as the geodesic between $a,b$ bound \textbf{black}{reduced} bigonal and triangular diagrams filled by contours. The simple geodesic triangle is illustrated as having shape II, but it may have any of the shapes of simple geodesic triangles shown in Theorem \ref{thm: Strebel's bigon and triangle classification}.}
        \label{geodesic triangles and bigons}
    \end{figure}

    Let $v \in \mathcal{A}_n$. We will show that $v$ must occur on or before the simple geodesic triangle in Figure \ref{geodesic triangles and bigons}. 
    
    Indeed, if $v$ were beyond the simple geodesic triangle, then there exists a vertex $v'$ on the geodesic connecting $a$ and $b$ such that $d_Y(v,v')\leq 1$. Since $d_Y(a,b) \leq \delta$, we have $d_Y(b, v') \leq \delta$. Thus, by the triangle inequality, $d_Y(v,b) \leq \delta + 1$. This yields $d_Y(1,b) \leq d_Y(1,v) + d_Y(v,b) \leq n + \delta + 1$, contradicting that $d_Y(1,b) = (x_i, x_j)_1 > n + \delta + 2$. 

    Therefore, $v$ is indeed before or on the simple geodesic triangle in Figure \ref{geodesic triangles and bigons}. It follows that $v$ is on $p_i$ or $v$ is on a common contour $r$ that $p_i,p_j$ both pass through. Indeed, if $v$ is before the simple geodesic triangle in Figure \ref{geodesic triangles and bigons}, then by the classification of simple geodesic bigons, we have $v$ is on a contour bounded by subsegments of $p_i$ and $p_j$, or $v$ is on $p_i$ if $p_i,p_j$ coincide at $v$. If $v$ is on the simple geodesic triangle and not on a contour bounded by $p_i$ and $p_j$, then by the classification of simple geodesic triangles (Theorem \ref{thm: Strebel's bigon and triangle classification}), we would obtain a vertex $v'$ on $[a,b]$ such that $d_Y(v,v') \leq 2$, which would yield a contradiction to $(x_i, x_j)_1 > n + \delta + 2$ as above. Therefore, we must have that $v$ is on a common contour of $p_i$ and $p_j$.

    Note that if $r$ is a contour bounded by subsegments of $p_i$ and $p_j$, then we must have
    \begin{align}\label{eq:big intersection}
        \vert r \cap p_i \vert > \Big(\frac{1}{2} - 2 \lambda\Big) \vert r \vert \geq 3\lambda \vert r \vert.
    \end{align}

    Indeed, let $s,t$ be contours adjacent to $r$ in the diagram bounded by $p_i$ and $p_j$ (or vertices if $r$ is the initial or terminal contour in the diagram, or if the diagram consists of a single contour). See Figure \ref{fig:contours u and v}. 

    \begin{figure}[h]
        \centering
        \includegraphics[width=0.5\linewidth]{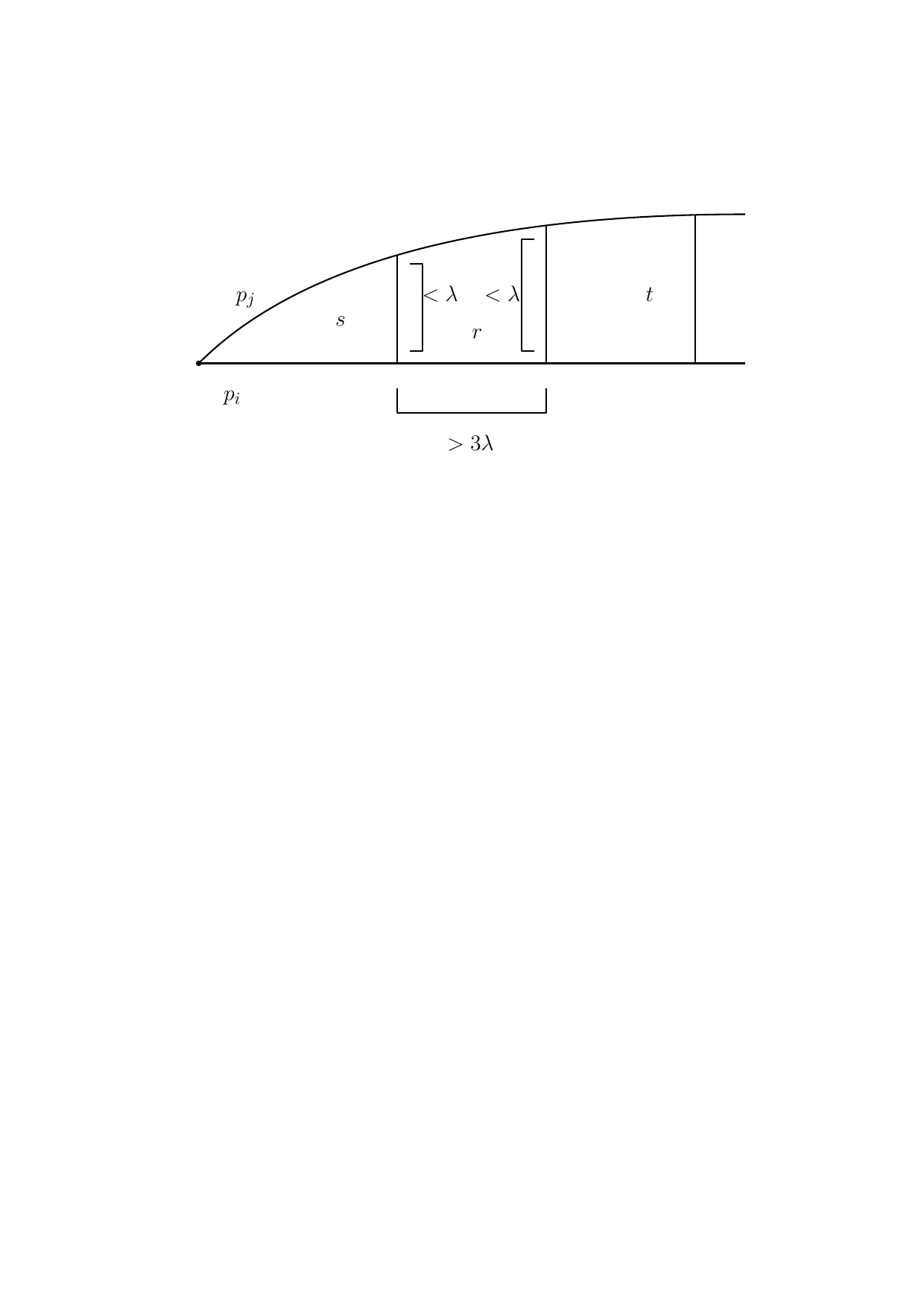}
        \caption{The contours $s,t$ adjacent to $r$ filling a diagram formed by $p$ and $q$.}
        \label{fig:contours u and v}
    \end{figure}

   \textcolor{black}{Since we assume each of bigonal and triangular diagrams in the diagram bounded by $p_i$ and $p_j$ are reduced}, we have that if $s,t$ are non-trivial, then $s,r$ and $r,t$ are contained in different relators. By the small cancellation condition, we then have $\vert r \cap s \vert < \lambda \vert r \vert $ and $\vert r \cap t \vert < \lambda \vert r \vert $. \textcolor{black}{Note that if $s$ (respectively, $t$) are vertices, i.e.\ the contour $s$ (respectively, $t$) does not exist, then $\vert r \cap s \vert = 0 < \lambda \vert r \vert$ (respectively, $\vert r \cap t \vert = 0 < \lambda \vert r \vert$), so the inequalities $\vert r \cap s \vert < \lambda \vert r \vert $ and $\vert r \cap t \vert < \lambda \vert r \vert $ still hold even if the contours $s$ or $t$ do not exist.} Since $p_j$ is a geodesic, we must have $\vert p_i \cap r \vert > (\frac12 - 2\lambda) \vert r \vert$, since if $\vert p_i \cap r \vert \leq (\frac12 - 2\lambda) \vert r \vert$, then the path along $r$ consisting of $s \cap r, p_i \cap r$ and $t \cap r$ would have length less than $\lambda \vert r \vert + (\frac12 - 2\lambda) \vert r \vert + \lambda \vert r \vert = \frac12 \vert r \vert$, hence this path would be shorter than $p_j \cap r$ and have the same endpoints as $p_j \cap r$, contradicting that $p_j \cap r$ is a geodesic (being a subpath of the geodesic path $p_j$). 

    By choice of $\lambda$, we have $\frac12 - 2 \lambda \geq  3\lambda$, hence we have $\vert p \cap r \vert >  3\lambda \vert r \vert$, as desired. 

    Now, if a vertex $v \in \mathcal{A}_n$ is on $p_i$, then $v \in p_{i,n-1} \cup p_{i,n} \cup p_{i,n+1} \subseteq \Theta_{n-1} \cup \Theta_{n} \cup \Theta_{n+1}$. Otherwise, $v$ is on a contour $r$ bounded by segments of $p_i$ and $p_j$, and contained in a relator $\Theta$. We will show in this case that $\Theta = \Theta_k$ for some $k \in \{n-1,n,n+1\}$. Suppose that $\Theta \neq \Theta_k$ for any $k \in \N$. Then the subsegment $\Theta \cap p_i$ of $p_i$ must intersect more than 3 $\Theta_k$. \textcolor{black}{Indeed, otherwise $\Theta \cap p_i$ would be covered by at most 3 $\Theta_k$, and since  $\Theta \neq \Theta_k$ for any $i$, we would obtain that $\Theta \cap \Theta_k \cap p_i$ is a piece for each $k$, so that $\vert \Theta \cap p_i \vert < 3 \lambda \girth(\Theta)$, but by above we have that $\vert p_i \cap r \vert > 3 \lambda \vert r \vert \geq 3 \lambda \girth(\Theta)$, and since $r \subseteq \Theta$, this contradicts that $\vert p_i \cap \Theta \vert < 3 \lambda \mathrm{girth}(\Theta)$ obtained from the assumption above that $\Theta \cap p_i$ intersects more than 3 $\Theta_k$. Thus, $\Theta \cap p_i$ must intersect more than 3 $\Theta_k$ if $\Theta \neq \Theta_k$ for any $k$. Then there exists $k_1, k_2 \in \N$ with $k_2 - k_1 \geq 3$ such that $ \Theta \cap p_i \cap \Theta_{k_1} \neq \emptyset$ and $ \Theta \cap p_i \cap \Theta_{k_2} \neq \emptyset$. Take vertices $x \in \Theta \cap p_i \cap \Theta_{k_1}$ and $y \in \Theta \cap p_i \cap \Theta_{k_2}$. Then $d_Y(x,y) \leq 1$ since $x,y$ are in the same relator $\Theta$, but on the other hand, by Proposition \ref{Geod decomp}, we have that $d_Y(x,y) \geq 2$, a contradiction. Therefore, we must have $\Theta = \Theta_k$ for some $k$. Since $\Theta$ contains a vertex $v$ with $d_Y(1,v) = n$, we must have $k \in \{n-1,n,n+1\}$. }
    
    This proves that $\mathcal{A}_n \subseteq \Theta_{n-1} \cup \Theta_n \cup \Theta_{n+1}$, and hence that $\mathcal{A}_n$ is finite. 

    We now construct the sequence $(a_n)_n \subset G$ by induction. 

    Since $\mathcal{A}_1$ is finite, there is a subsequence $(p_{n_k})_k$ of $(p_n)_n$ and a vertex $a_1 \in \mathcal{A}_1$ such that $p_{n_k}$ all pass through $a_1$. Considering the sequence of geodesic segments $(p_{n_k})_k$, repeating the same argument above yields a vertex $a_2 \in \mathcal{A}_2$ such that infinitely many $p_{n_k}$ pass through $a_2$. Continuing inductively in this manner, a diagonalization argument yields a sequence $(a_n)_n$ with $a_n \in \mathcal{A}_n$ for all $n$ and a subsequence $(p_{j_n})_n$ of $(p_n)_n$ such that each $p_{j_n}$ passes through $a_m$ for all $m \leq n$. 
    
    Note that there are only finitely many geodesic segments between any two vertices $v,w$ of $X$, since letting $n = d_Y(v,w)$ and fixing a geodesic sequence $(\Theta_i)_{i=1}^n$ with $v \in \Theta_1$ and $w \in \Theta_n$ (c.f.\ Proposition \ref{Geod decomp}), by \cite[Remark 3.7]{Gruber_Sisto} we have that any geodesic $\gamma$ from $v$ to $w$ in $X$ is contained in $\cup_{i=1}^n \Theta_i$, which is a finite subgraph of $X$ (since each $\Theta_i$ is finite). Therefore, the subsequence $(p_{j_n})_n$ has a subsequence converging to a geodesic ray $p$ in $X$ with $a_n \in p$ for all $n$, since for each $m$ there are infinitely many $p_{j_n}$ with a common subsegment from 1 to $a_m$.
    
    We have $\lim_{n \to \infty}d_Y(1,a_n) = \lim_{n \to \infty}n = \infty$ by $a_n \in \mathcal{A}_n$. Hence, by Lemma \ref{lem:geod ray converging in Y}, it remains to show that $a_n \to \xi$ in $Y$. Recall that for each $n$, we have $a_n \in p_{j_n}$ where $p_{j_n}$ is a geodesic segment from $1$ to $x_{j_n}$ with $d_Y(1,x_{j_n}) \geq n$. Decomposing $p_{j_n}$ into a geodesic sequence of edges in $Y$ as above, we obtain $d_Y(a_n, x_{j_n}) \leq d_Y(1, x_{j_n}) - n + 1$. This and $d_Y(1,a_n)=n$ yield that $(a_n, x_{j_n})_1 \geq n-1$. 
    Hence $(a_n, x_{j_n})_1 \to \infty$ as $n \to \infty$, and thus $(a_n)_n \sim (x_{j_n})_n$. Since $(x_{j_n})_n$ is a subsequence of $(x_j)_j$, we have that $(a_n)_n \sim (x_j)_j$. Thus, since $x_j \to \xi$ in $Y$, we have that $a_n \to \xi$ in $Y$. Therefore, $p \in G(\xi)$.
\end{proof}

\begin{cor}
    \label{cor: Geodesic and seq boundaries same}
    The geodesic boundary of $Y$ (i.e.\ the set of all geodesic rays in $Y$ based at 1 modulo finite Hausdorff distance with respect to $d_Y$) coincides with the sequential boundary $\partial Y$ as a topological space.
\end{cor}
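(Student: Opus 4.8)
The plan is to exhibit a natural bijection between the two boundaries and to verify that it respects the topologies, with the only substantive ingredient being surjectivity, which is precisely what Lemma \ref{lemma: geodesic representatives for boundary} supplies. Concretely, I would consider the map $\iota$ sending the class (modulo finite Hausdorff distance) of a geodesic ray $\gamma$ in $Y$ based at $1$ to the sequential limit $[(\gamma(n))_n] \in \partial Y$. This is well defined: any geodesic ray in a hyperbolic space converges to infinity in the sense of Definition \ref{def:seq to infty}, and two rays at finite Hausdorff distance have the same sequential limit.

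Next I would establish injectivity. If two geodesic rays $\gamma_1,\gamma_2$ from $1$ satisfy $\iota(\gamma_1)=\iota(\gamma_2)$, then $\lim_{i,j\to\infty}(\gamma_1(i),\gamma_2(j))_1=\infty$, and a standard application of the four-point inequality (Proposition \ref{prop:hyp sp}(2)) forces $\sup_n d_Y(\gamma_1(n),\gamma_2(n))$ to be bounded by a constant depending only on $\delta$. Hence $\gamma_1,\gamma_2$ lie at finite Hausdorff distance and define the same element of the geodesic boundary. This argument is purely metric and does not use properness of $Y$. Continuity of $\iota$ and of its inverse on the image are the usual identifications of the Gromov-product topology on $\partial Y$ with the fellow-traveling topology on geodesic rays, valid in any $\delta$-hyperbolic space.

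The crux is surjectivity of $\iota$, and here the non-properness of $Y$ is the main obstacle: vertices of $Y=\mathrm{Cay}(G,S\cup W)$ have infinite degree whenever $W$ is infinite, so closed balls are not compact, the usual Arzel\`a--Ascoli argument is unavailable, and one cannot simply quote the classical coincidence of geodesic and sequential boundaries for \emph{proper} hyperbolic spaces. To circumvent this I would invoke Lemma \ref{lemma: geodesic representatives for boundary}: given $\xi\in\partial Y$, it produces a geodesic ray $p$ in $X$ from $1$ representing $\xi$ together with a sequence of vertices $(x_n)_n$ on $p$ with $d_Y(x_i,x_j)=|i-j|$ for all $i,j$. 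Since consecutive vertices satisfy $d_Y(x_n,x_{n+1})=1$, they are joined by an edge of $Y$, so the concatenation $x_0x_1x_2\cdots$ is a genuine combinatorial geodesic ray $\gamma$ in $Y$ whose vertices converge to $\xi$; thus $\iota(\gamma)=\xi$.

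Combining these three points, $\iota$ is a homeomorphism, so the geodesic boundary and the sequential boundary $\partial Y$ coincide as topological spaces. I expect the injectivity and topological comparisons to be routine facts about $\delta$-hyperbolic spaces; the entire difficulty is concentrated in surjectivity, where the whole force of Lemma \ref{lemma: geodesic representatives for boundary} (and the small-cancellation geometry behind it) is used to replace the missing compactness.
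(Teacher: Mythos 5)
Your proposal is correct and follows essentially the same route as the paper: the same natural map $\iota$ from the geodesic boundary to $\partial Y$, the standard identification of the Gromov-product and fellow-traveling topologies for the homeomorphism onto the image, and surjectivity extracted from Lemma \ref{lemma: geodesic representatives for boundary} (via the sequence $(a_n)$ with $d_Y(a_i,a_j)=|i-j|$ converging to $\xi$, whose consecutive vertices span a geodesic edge ray in $Y$). You merely make explicit the well-definedness and injectivity steps that the paper folds into its citation of \cite[Chapter III.H.3]{BH99}, and your four-point-inequality injectivity argument is valid without properness, as needed here.
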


\begin{proof}
Let $\partial_g Y$ denote the geodesic boundary of $Y$. We have a natural map $\iota: \partial_g Y \to \partial Y$, since each geodesic ray $(a_n)_{n \in \N}$ in $Y$ converges to infinity in $Y$, and hence defines a point in $\partial Y$. By definition of the topology on $\partial Y$ in terms of the Gromov product and the topology of $\partial_g Y$ in terms of geodesics fellow travelling for longer distances (see, for instance \cite[Chapter III.H.3]{BH99}), this map $\iota$ is a homeomorphism onto its image. Lemma \ref{lemma: geodesic representatives for boundary} shows that $\iota$ is surjective. Thus, $\iota$ is a homeomorphism. 
\end{proof}

{\color{black}

Lemma \ref{lem:big relators form geod} below is used in the proof of Lemma \ref{lem: form of geodesics in bundle}.

    \begin{lem}\label{lem:big relators form geod}
        Let $x,y \in G$ and let $p$ be a geodesic path in $X$ from $x$ to $y$. Let $x=a_0,a_1,\ldots,a_k=y$ be a sequence of vertices on $p$ with $k\in \N$. Suppose that for every $i \in \{1,\ldots,k\}$, there exists a relator $\Theta_i$ in $X$ such that $p_{[a_{i-1},a_i]} \subset \Theta_i$, $|p_{[a_{i-1},a_i]}| \ge 3\lambda girth(\Theta_i)$, and $\Theta_i \neq \Theta_{i+1}$. Then, the sequence $(a_i)_{i=0}^k$ is geodesic in $Y$.
    \end{lem}

    \begin{proof}
        Suppose for contradiction that there exist $i,j \in \{1,\ldots,k\}$ such that $i < j$ and $\Theta_i = \Theta_j$. Note $i+1<j$ by $\Theta_i \neq \Theta_{i+1}$. By $\{a_{i-1},a_j\} \subset \Theta_i$ and Lemma \ref{geodesic subpath}, we have $p_{[a_{i-1},a_j]} \subset \Theta_i$. Hence, $p_{[a_i,a_{i+1}]} \subset \Theta_i \cap \Theta_{i+1}$. By this and $|p_{[a_i, a_{i+1}]}| \ge 3\lambda girth(\Theta_{i+1})$, we have $\Theta_i = \Theta_{i+1}$ by $C'(\lambda)$-condition. This contradicts $\Theta_i \neq \Theta_{i+1}$. Thus, $\Theta_i$'s are all distinct.

    Set $\ell=d_Y(x,y)$. By Proposition \ref{Geod decomp}, we can take a decomposition $p = \gamma_{1} \cdots \gamma_{\ell}$, where each $\gamma_{i}$ is either a path in some relator $\Delta_i$ in $X$ or an edge in $X$ not occurring on any relator. In the latter case, we define $\Delta_i$ by $\Delta_i = \gamma_i$. By $\ell=d_Y(x,y)$, we have $\ell \le k$.

    Suppose for contradiction that there exists $n \in \{1,\ldots,k\}$ such that $\Theta_n \notin \{\Delta_m \mid m \in \{1,\ldots,\ell\}\}$. There exist $i,j \in \{1,\ldots,\ell\}$ with $i \le j$ such that $a_{n-1}\in \gamma_i$ and $a_n \in \gamma_j$. If $j>i+2$, then we have $\gamma_{i+1}\gamma_{i+2} \subset p_{[a_{n-1},a_n]}$, which contradicts $d_Y(\gamma_{(i+1)-},\gamma_{(i+2)+})=2$, where $\gamma_{(i+1)-}$ is the initial vertex of $\gamma_{i+1}$ and $\gamma_{(i+2)+}$ is the terminal vertex of $\gamma_{i+2}$. On the other hand, if $j \le i+2$, then by $C'(\lambda)$-condition, we have $|p_{[a_{n-1},a_n]}| \le \sum_{s=i}^{i+2}|\Theta_n \cap\Delta_s \cap p| < 3\lambda girth(\Theta_n)$, which contradicts $|p_{[a_{n-1},a_n]}| \ge 3\lambda girth(\Theta_n)$.

    Thus, $\{\Theta_n\}_{n=1}^k \subset \{\Delta_m\}_{m=1}^\ell$. This implies $k\le \ell$, hence $k =\ell =d_Y(x,y)$. Hence, the sequence $(a_i)_{i=0}^k$ is geodesic in $Y$.
    \end{proof}

    \begin{lem}
    \label{lem: form of geodesics in bundle}
    Let $\xi \in \partial Y$ and $p,q \in G(\xi)$. The following hold.
    \begin{itemize}
    \item[(1)]
    There exists a sequence of vertices $(v_i)_{i \in \N}$ on $p$ and $(w_i)_{i \in \N}$ on $q$ such that for any $i \geq 1$, either (a) or (b) below holds (see Figure \ref{fig:Geodesics in bundle}).
    
    \begin{itemize}
        \item[(a)] $p_{[v_{i-1}, v_i]} = q_{[w_{i-1}, w_i]}$.
        \item[(b)] There exist a relator $\Theta$ in $X$ and paths $s$ from $v_i$ to $w_i$ and $t$ from $w_{i-1}$ to $v_{i-1}$ in $X$ such that $p_{[v_{i-1}, v_i]} s q_{[w_{i-1}, w_i]}^{-1}t$ is a contour in $\Theta$ and we have $\min\{|p_{[v_{i-1}, v_i]}|, |q_{[w_{i-1}, w_i]}|\} \ge \big(\frac{1}{2}-2\lambda\big)girth(\Theta)$.
    \end{itemize}

    %we have that $p$ and $q$ form a sequence of diagrams filled by contours as shown in Figure \ref{fig:Geodesics in bundle}. %In particular, for any $p,q \in G(\xi)$ and for any $n \in \N$, we have that $d_Y(p(n), q(n)) \leq 1$. 
    \item[(2)] 
    Let $(r_n)_{n \in \N}$ be a sequence of subpaths of $p$ as in Lemma \ref{lem:geod ray converging in Y} (3), and $\Theta_n \supset r_n$ is either a relator or $\Theta_n = r_n$ if $r_n$ is not contained in any relator, then $q \subset \cup_{n \in \N} \Theta_n$ and $q \cap \Theta_n \neq \emptyset$ for each $n$, intersecting each $\Theta_n$ either along $p$ or along a contour shared with $p$. Moreover, for any vertex $v \in q$ with $d_Y(1,v)=N \in \N$, we have $v \in \Theta_{N-1}\cup\Theta_N\cup\Theta_{N+1}$.

    \begin{figure}[h]
        \centering
        \includegraphics[width=0.5\linewidth]{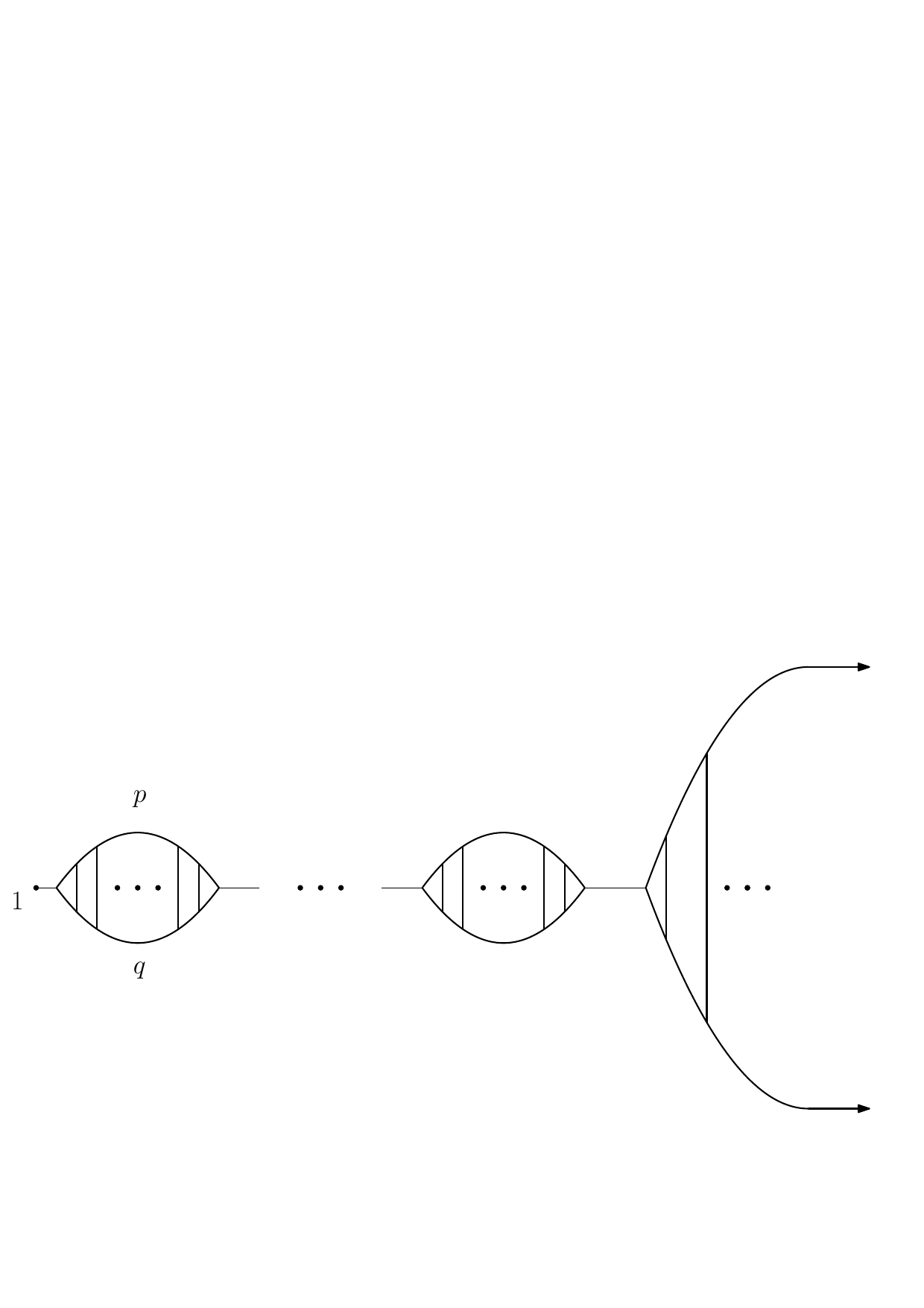}
        \caption{The form of any two geodesic rays $p,q$ in $G(\xi)$.}
        \label{fig:Geodesics in bundle}
    \end{figure}        
    \end{itemize}    
\end{lem}
}
\begin{proof}
    (1)
    We consider the following cases. 

    First, suppose that there are infinitely many $i \in \N$ such that $p(i) = q(i)$. If $p=q$, then simply set $v_i = p(i) = q(i) = w_i$ for all $i \in \N$. Otherwise, there exist sequences of natural numbers $(s_n)_{n=1}^N, (t_n)_{n=1}^N$ (with $N$ possibly infinite) with $s_n < t_n \le s_{n+1}$ for all $n$ {\color{black}(i.e. $0 \le s_1 < t_1 \le s_2 <t_2 \le \cdots$)} such that:
    
    \begin{itemize}
        \item $p(i) = q(i)$ for each $0 \leq i \leq s_1$, and $t_n \leq i \leq s_{n+1}$, and
        \item $p(i) \neq q(i)$ for each $s_n < i < t_n$.
    \end{itemize}

     For each $n$, we then have that $p_{[p(s_n), p(t_n)]}$ and $q_{[q(s_n), q(t_n)]}$ form simple geodesic bigons $B_n$ in $X$, and the segments $p_{[1, p(s_1)]}, q_{[1,q(s_1)]}$ and $p_{[p(t_n), p(s_{n+1})]}, q_{[q(t_n), q(s_{n+1})]}$ coincide. 
    
    By Theorem \ref{thm: Strebel's bigon and triangle classification}, each of the simple geodesic bigons $B_n$ bounds a diagram of shape $I_1$, filled by a sequence of contours $(r_{i,n})_{i=1}^{k_n}$. This yields a sequence of vertices $(v_{i,n})_{i=0}^{m_n}$ on $p$ and $(w_{i,n})_{i=0}^{m_n}$ on $q$ as well as a sequence of paths $(\alpha_{i,n})_{i=1}^{m_n}$ from $v_{i,n}$ to $w_{i,n}$ and $(\beta_{i,n})_{i=1}^{m_n}$ from $w_{i-1,n}$ to $v_{i-1,n}$ such that for each $i = 1,\ldots, k_n$ we have $r_{i,n} = p_{[v_{i-1,n}, v_{i,n}]} \alpha_{i,n} q_{[w_{i-1, n}, w_{i,n}]}^{-1} \beta_{i,n}$. 

    We then define the sequences $(v_i)_{i \in \N}$ and $(w_i)_{i \in \N}$ by concatenating the sequences of vertices on the segments where $p,q$ coincide and on the bigons $B_n$ (below, $\cdot$ denotes concatenation of sequences):
    
    $$(v_i)_{i \in \N} = (p(i))_{i=0}^{s_1} \cdot (v_{i,1})_{i=1}^{t_1} \cdot ((p(i))_{i=t_1+1}^{s_2}) \cdots \text{, and } (w_i)_{i \in \N}= (q(i))_{i=0}^{s_1} \cdot (w_{i,1})_{i=1}^{t_1} \cdot ((q(i))_{i=t_1+1}^{s_2}) \cdots $$
    
    If $N$ is finite, then the last terms in the above concatenations are $(p(i))_{i=t_N+1}^{\infty}$ and $(q(i))_{i=t_N +1}^{\infty}$.  

    Now suppose that there are only finitely many $i \in \N$ such that $p(i) = q(i)$. Then there exist sequences of natural numbers $(s_n)_{n=1}^{N+1}, (t_n)_{n=1}^N$ (with $N \in \N$ now finite) with $s_n < t_n \le s_{n+1}$ for all $n$ {\color{black}(i.e. $0 \le s_1 < t_1 \le s_2 <t_2 \le \cdots$)} such that:
    
    \begin{itemize}
        \item $p(i) = q(i)$ for each $0 \leq i \leq s_1$, and $t_n \leq i \leq s_{n+1}$, 
        \item $p(i) \neq q(i)$ for each $s_n < i < t_n$, and 
        \item $p(i) \neq q(i)$ for all $i > s_{N+1}$
    \end{itemize}
    
    Arguing as in the first case above, there exist sequences of vertices $(v_i)_{i=1}^M, (w_i)_{i=1}^M$ on $p_{[1, p(s_{N+1})]}$ and $q_{[1, q(s_{N+1})]}$ with the desired properties. It remains to show that there exist desired sequences of vertices on $p_{[p(s_{N+1}), \infty) }$ and $q_{[q(s_{N+1}), \infty)}$. {\color{black} Note that $p_{[p(s_{N+1}), \infty) } \cap q_{[q(s_{N+1}), \infty)} = \{p(s_{N+1})\} \,(=\{q(s_{N+1})\})$ since $p$ and $q$ are geodesic and we have $\forall\, i > s_{N+1},\, p(i) \neq q(i)$.

    Set $o = p(s_{N+1})$. By $p, q \in G(\xi)$, we have $\lim_{\ell,m \to \infty}(p(\ell),q(m))_o = \infty$, where the Gromov product is in $Y$. Hence, by Proposition \ref{Geod decomp}, for each $n \in \N$, there exist vertices $v_n \in p$ and $w_n$ such that $d_Y(v_n,w_n) \le \delta$ and $\min\{d_Y(o,v_n), d_Y(o,w_n)\} \ge n+\delta+2$. By applying Theorem \ref{thm: Strebel's bigon and triangle classification} to a geodesic triangle in $X$ formed by $p_{[o,v_n]}$, $q_{[o,w_n]}$, and some geodesic in $X$ from $v_n$ to $w_n$ in the same way as the proof of Lemma \ref{lemma: geodesic representatives for boundary}, we can see that there exists a sequence of vertices $(a_{n,i})_{i = 0}^{n}$ on $p_{[o,v_n]}$ and $(b_{n,i})_{i = 0}^{n}$ on $q_{[o,w_n]}$, where $a_{n,0}=b_{n,0}=o$, that satisfy the following property:
    \begin{itemize}
        \item[($\ast$)]for any $i \in \{1,\ldots,n\}$, there exist a relator $\Theta_{n,i}$ and a path $s_{i,n}$ in $X$ from $a_{n,i}$ to $b_{n,i}$ such that the loop $p_{[a_{n,i-1},a_{n,i}]}s_{n,i}(q_{[b_{n,i-1},b_{n,i}]})^{-1}s_{n,i-1}^{-1}$ is a contour in $\Theta_{n,i}$ and we have $\Theta_{n,i-1} \neq \Theta_{n,i}$.
    \end{itemize}
    
    \textcolor{black}{Since $\frac12 - 2\lambda\ge 3\lambda$, using the same argument illustrated in Figure \ref{fig:contours u and v} in the proof of Lemma \ref{lemma: geodesic representatives for boundary} and applying Lemma \ref{lem:big relators form geod}, the sequences $(a_{n,i})_{i = 0}^{n}$ and $(b_{n,i})_{i = 0}^{n}$ are a geodesic path in $Y$.} Hence, for any $i,m,n \in \N$ with $m,n \ge i+1$, we have $a_{m,i} \in p_{[o,a_{n,i+1}]}$ and $b_{m,i} \in q_{[o,b_{n,i+1}]}$. Indeed, $a_{m,i} \notin p_{[o,a_{n,i+1}]}$ implies $d_Y(o,a_{n,i+1}) \le i$, which contradicts $d_Y(o,a_{n,i+1}) = i+1$ (and the same argument holds for $q$). 
    
    Thus, $\#\{a_{n,i} \mid n \ge i \} < \infty$ for any $i \in \N$. By taking subsequences and diagonal argument, there exist a sequence of vertices $(a_i)_{i \in \N}$ on $p$ and $(b_i)_{i \in \N}$ on $q$ and a subsequence $(n_j)_{j \in \N} \subset \N$ such that for any $i,j \in \N$ with $i \le j$, we have $a_{n_j,\,i} = a_i$ and $b_{n_j,\,i} = b_i$. Hence, the sequences $(a_i)_{i \in \N}$ and $(b_i)_{i \in \N}$ satisfy condition (b) by the property ($\ast$).

    (2)
    By Lemma \ref{lem: form of geodesics in bundle} (1) and $\frac12 - 2\lambda\ge 3\lambda$, it's enough to show that for any relator $\Theta$ in $X$ satisfying $|\Theta \cap p| \ge 3 \lambda girth(\Theta)$, there exists $i \in \N$ such that $\Theta = \Theta_i$. Suppose $\Theta \notin \{\Theta_n \mid n \in \N\}$ for contradiction. Since the decomposition of $p$ into $r_n$'s provides a geodesic ray in $Y$, there exist $i \in \N$ such that $\Theta \cap p \subset \Theta_i\cup\Theta_{i+1}\cup\Theta_{i+2}$. Since we have $|\Theta \cap\Theta_k \cap p| < \lambda girth(\Theta)$ for any $k\in \N$ by $\Theta \neq\Theta_k$ and $C'(\lambda)$-condition, this implies 
    $$|\Theta \cap p| \le \sum_{k=i}^{i+2}|\Theta \cap\Theta_k \cap p| < 3\lambda girth(\Theta),$$
    which contradicts $|\Theta \cap p| \ge 3 \lambda girth(\Theta)$.

    To show the ``moreover" part, let $v \in q$ satisfy $d_Y(1,v) = N \in \N$. For each $n \in \N$, let $r_{n-}$ and $r_{n+}$ be the initial vertex and the terminal vertex of the path $r_n$ respectively. There exists $i \in \N$ such that $v \in \Theta_i$. If $i < N-1$, then we have $d_Y(1,v) \le d_Y(1,r_{i-}) + d_Y(r_{i-},v)\le i-1+1<N$, which contradicts $d_Y(1,v)=N$. On the other hand, if $i>N+1$, then we have $d_Y(1,r_{i+}) \le d_Y(1,v) + d_Y(v,r_{i+})\le N+1<i$, which contradicts $d_Y(1,r_{i+}) = i$. Thus, $N-1 \le i \le N+1$.
    }
\end{proof}

From now on, when we refer to $\partial Y$, we will mean the geodesic boundary of $Y$.

%(in fact, the argument in the ``moreover" statement in Lemma \ref{1} shows that any two geodesic rays in $X$ representing the same boundary point in $\partial Y$ pass through the same sequence of contours). 

We next establish that there exists a \emph{lexicographically least} geodesic ray in $G(\xi)$. Fix an arbitrary well-order on $S$. Using this well-order, we obtain a lexicographic order on $S^{\N}$, hence on geodesic rays in $X$ (via the labels of the geodesic rays in $S^{\N}$). We will deduce the existence of a lexicographically least geodesic in $G(\xi)$ from the compactness of $G(\xi)$ in $S^{\N}$. %Recall that with the product topology, $S^{\N}$ is a compact metrizable space.

{\color{black} \begin{cor}\label{cor:G(xi) is compact}
    For any $\xi \in \partial Y$, the following hold.
    \begin{itemize}
        \item[(1)]
        The subgraph in $X$ induced by $\bigcup_{p \in G(\xi)}p$ is locally finite.
        \item[(2)] 
        $G(\xi)$ is compact as a subspace of $S^{\N}$.
    \end{itemize}
\end{cor}

\begin{proof}
    (1)
    Fix a geodesic ray $p \in G(\xi)$. Let $(r_n)_{n \in \N}$ be a sequence of subpaths of $p$ as in Lemma \ref{lem:geod ray converging in Y} (3), and $\Theta_n \supset r_n$ is either a relator or $\Theta_n = r_n$ if $r_n$ is not contained in any relator. For each $n \in \N$, let $r_{n-}$ and $r_{n+}$ be the initial vertex and the terminal vertex of the path $r_n$ respectively. By Lemma \ref{lem: form of geodesics in bundle} (2), every vertex in $\bigcup_{q \in G(\xi)}q$ is contained in $\bigcup_{n \in \N} \Theta_n$. Hence, it's enough to show that the subgraph in $X$ induced by $\bigcup_{n \in \N} \Theta_n$ is locally finite.

    Let $v \in \Theta_n$ and $w \in \Theta_m$ be vertices such that $d_X(v,w)=1$, where $n,m \in \N$. If $m < n-2$, then we have
    \begin{align*}
        d_Y(1,r_{n+}) \le d_Y(1,r_{m-}) + d_Y(r_{m-},w) + d_Y(w,v) + d_Y(v,r_{n+})\le (m-1)+1+1+1<n,
    \end{align*}
    which contradicts $d_Y(1,r_{n+})=n$. Hence, $m \ge n-2$. In the same way, we also get $n \ge m-2$. Thus, we have $\{w' \in \bigcup_{k \in \N} \Theta_k \mid d_X(v,w')=1\} \subset \bigcup_{k = n-2}^{n+2} \Theta_k$. This implies local finiteness of the induced subgraph of $\bigcup_{k \in \N} \Theta_k$.

    (2)
    %Since $S$ is finite, $S^{\N}$ is compact, so it suffices to show that $G(\xi)$ is closed in $S^{\N}$. 
     Let $(p_n)_{n \in \N}$ be a sequence of geodesic rays in $G(\xi)$. Denote $p := p_1$. Fix a decomposition $(r_i)_{i \in \N}$ of $p$ into subsegments as in Lemma \ref{lem:geod ray converging in Y} (3). For each $i$, let $\Theta_i$ be a relator containing $r_i$ if such a relator exists or $r_i$ if $r_i$ is not contained in any relator. By Lemma \ref{lem: form of geodesics in bundle} (2), for each $n \in \N$, the set $\mathcal{A}_n := \{v \in \bigcup G(\xi): d_Y(1,v) = n\}$ is contained in $\Theta_{n-1} \cup \Theta_n \cup \Theta_{n+1}$, hence is finite. 

     By $\forall\,n\in\N,\,\#\mathcal{A}_n<\infty$ and Corollary \ref{cor:G(xi) is compact} (1), there exists a subsequence $(p_{n_k})_k$ of $(p_n)_n$ (taken by diagonal argument) which converges to a geodesic ray $q$ in $X$ that passes through a sequence of vertices $(v_k)_{k \in \N}$ with $d_Y(1,v_k) = k$, $d_Y(p,v_k) \leq 1$ for all $k \in \N$. {\color{black} Since $p \in G(\xi)$ and $d_Y(p, v_k) \leq 1$ for all $k$, this implies that $q \in G(\xi)$ by Lemma \ref{lem:geod ray converging in Y}.} We conclude that $G(\xi)$ is compact. 
     
\end{proof}
}

The following lemma is standard, but we record its proof for completeness.

\begin{lem}
    \label{3}
    For each non-empty {\color{black}closed} $K \subseteq S^{\N}$, there exists a lexicographically least element of $K$.
\end{lem}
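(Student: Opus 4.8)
The plan is to construct the lexicographically least element of $K$ one coordinate at a time, using the finiteness of $S$ to take minima at each step and the compactness of $K$ to guarantee that the resulting sequence actually belongs to $K$.

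First I would build the candidate element greedily. Set $K_0 = K$, which is non-empty by hypothesis, and let $s_1 = \min\{x_1 : x \in K_0\}$; this minimum exists because $S$ is finite and totally ordered and $K_0 \ne \emptyset$. Proceeding inductively, suppose $s_1, \ldots, s_n$ and a non-empty set $K_n = \{x \in K : x_i = s_i \text{ for all } i \le n\}$ have been defined. Then put $s_{n+1} = \min\{x_{n+1} : x \in K_n\}$ (well-defined since $S$ is finite and $K_n \ne \emptyset$) and $K_{n+1} = \{x \in K_n : x_{n+1} = s_{n+1}\}$, which is again non-empty. This produces a sequence $\sigma = (s_1, s_2, \ldots) \in S^{\N}$ together with a nested family $K = K_0 \supseteq K_1 \supseteq K_2 \supseteq \cdots$ of subsets of $K$.

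The main point, and the only place where compactness enters, is to verify that $\sigma \in K$. Each set $\{x \in S^{\N} : x_i = s_i \text{ for all } i \le n\}$ is determined by finitely many coordinates and is therefore closed (indeed clopen) in the product topology, since $S$ carries the discrete topology; hence each $K_n$ is a closed subset of the compact space $K$. The $(K_n)_{n}$ are nested and non-empty, so by compactness (via the finite intersection property) $\bigcap_{n \in \N} K_n \ne \emptyset$. Any point of this intersection has $i$-th coordinate equal to $s_i$ for every $i$, so it must equal $\sigma$; thus $\sigma \in K$. This is the step I expect to be the crux: passing from the finitely-determined approximations $s_n$ to a genuine element of $K$ is exactly what could fail without compactness.

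Finally I would check that $\sigma$ is lexicographically least, which is routine and uses only the construction, not compactness. Given any $y \in K$ with $y \ne \sigma$, let $n$ be the least index with $y_n \ne s_n$; then $y$ agrees with $\sigma$ on the first $n-1$ coordinates, so $y \in K_{n-1}$ and hence $y_n \ge \min\{x_n : x \in K_{n-1}\} = s_n$. Since $y_n \ne s_n$, this forces $y_n > s_n$, so $\sigma <_{\mathrm{lex}} y$. Therefore $\sigma$ is the lexicographically least element of $K$, completing the argument.
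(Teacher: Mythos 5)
Your proof is correct and is essentially the paper's argument: both fix the minimal prefixes (your greedy coordinate-by-coordinate minima produce exactly the lexicographically least length-$n$ prefixes the paper uses), form the nested non-empty closed sets $K_n$, and invoke compactness via the finite intersection property to place the limiting sequence $\sigma$ in $K$ before verifying lex-minimality. The only cosmetic difference is that the paper justifies the existence of each minimal prefix by noting the set of length-$n$ prefixes is a compact subset of the discrete space $S^n$ (hence finite), whereas you appeal directly to the finiteness of $S$; in the paper's setting these are interchangeable.
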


\begin{proof}
    {\color{black}For each $n\in \N$, we define the element $s_n\in S$ and the subset $K_n$ of $K$ inductively as follows:
    \begin{align*}
    s_1&=\min \{ w_1 \in (S,\le) \mid \exists\, w\in K,\, w=(w_1,w_2,\ldots) \}, \\
    K_1&=\{ w\in K \mid w=(s_1,w_2,\ldots) \}, \\
    s_{n+1}&=\min \{ w_{n+1} \in (S,\le) \mid \exists\, w\in K_n,\, w=(s_1,\ldots,s_n,w_{n+1},\ldots) \}, \\
    K_{n+1}&=\{ w\in K_n \mid w=(s_1,\cdots,s_n,s_{n+1},\ldots) \}.
    \end{align*}
    Note that each $K_n$ is nonempty since $K$ is nonempty and $\le$ is a well-order on $S$. We define the element $s\in S^\N$ by $s=(s_1,s_{2},s_{3},\ldots)$ and take an element $t_n \in K_n$ for each $n\in\N$. Since $(t_n)_{n=1}^\infty$ converges to $s$ in $S^\N$ and $K$ is closed, we have $s \in K$. By $s \in\bigcap_{n=1}^\infty K_n$, the element $s$ is the lexicographically least in $K$.}
\end{proof}

\begin{cor}
    \label{4}
     For each $\xi \in \partial Y$, there exists a lexicographically least geodesic ray in $G(\xi)$. 
\end{cor}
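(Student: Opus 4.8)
The plan is to obtain the corollary as an immediate consequence of the two preceding lemmas together with the nonemptiness of $G(\xi)$. First I would recall that $G(\xi) \neq \emptyset$ by Lemma \ref{lemma: geodesic representatives for boundary}, and that by Lemma \ref{2} the set $G(\xi)$ is compact when regarded as a subspace of $S^{\N}$ via the injective label map $p \mapsto \mathrm{lab}(p)$. Hence $G(\xi)$ is a non-empty compact subset of $S^{\N}$, and Lemma \ref{3} applies verbatim: there is a lexicographically least element $\sigma \in G(\xi)$.

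The only point worth spelling out is that this lexicographically least \emph{label} in fact comes from a genuine geodesic ray lying in $G(\xi)$. This is guaranteed because the map $G(\xi) \to S^{\N}$ sending a geodesic ray to its label is injective, so the lexicographic order on $S^{\N}$ transfers unambiguously to an order on geodesic rays in $G(\xi)$; and because the proof of Lemma \ref{3} produces the least element $\sigma$ as the unique point of $\bigcap_n K_n \subseteq G(\xi)$, so $\sigma$ genuinely belongs to $G(\xi)$ rather than merely to its closure (the two coincide since $G(\xi)$ is compact, hence closed, in $S^{\N}$). Thus the geodesic ray with label $\sigma$ is the desired lexicographically least geodesic ray in $G(\xi)$.

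There is essentially no obstacle remaining at this stage, since all the substance has already been carried out: the nontrivial content is the compactness of $G(\xi)$ (Lemma \ref{2}), which in turn rests on the small cancellation geometry established in Lemma \ref{lemma: geodesic representatives for boundary}, and the elementary compactness-plus-finite-intersection-property argument of Lemma \ref{3}. The corollary is then a one-line combination of these facts.
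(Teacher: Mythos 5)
Your proof is correct and follows the paper's own argument exactly: the paper likewise obtains the corollary by combining the compactness of $G(\xi)$ (Lemma \ref{2}) with the existence of a lexicographically least element in a non-empty compact subset of $S^{\N}$ (Lemma \ref{3}). Your explicit appeals to $G(\xi) \neq \emptyset$ (Lemma \ref{lemma: geodesic representatives for boundary}) and to the injectivity of the label map are left implicit in the paper, but they are the right points to spell out and introduce nothing new.
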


\begin{proof}
    By Corollary \ref{cor:G(xi) is compact} (2), $G(\xi)$ is compact in $S^{\N}$, and hence by Lemma \ref{3}, there exists a lexicographically least geodesic ray in $G(\xi)$.
\end{proof}

\begin{defn}
    For each $\xi \in \partial Y$, using Corollary \ref{4}, put $\sigma_{\xi}$ to be the label of the lexicographically least geodesic ray in $G(\xi)$. %We have that $\sigma_{\xi}$ is well-defined by Corollary \ref{4}. 
    We then define a map $\Phi : \partial Y \to S^{\N}$ via $\xi \mapsto \sigma_{\xi}$. 
\end{defn}

\begin{lem}
    \label{Borel inj}
    The map $\Phi : \partial Y \to S^{\N}$ is a Borel injection.
\end{lem}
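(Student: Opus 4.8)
The plan is to prove the two assertions separately: first that $\Phi$ is injective, then that it is Borel.

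For injectivity, recall that by the graphical $C'(\lambda)$ condition no two edges of $X$ with a common initial vertex carry the same label, so the map sending a geodesic ray $p=(p(n))_n$ based at $1$ to its label $\mathrm{lab}(p)\in S^{\N}$ is injective: from $\mathrm{lab}(p)=(s_n)_n$ one recovers $p$ by $p(0)=1$ and $p(n)=p(n-1)s_n$. Now suppose $\Phi(\xi)=\Phi(\xi')=\sigma$. Then the lexicographically least rays of $G(\xi)$ and of $G(\xi')$, which exist by Corollary \ref{4}, both have label $\sigma$; since labels determine rays, both coincide with the unique ray $p_\sigma$ having label $\sigma$. As $p_\sigma\in G(\xi)$ and $p_\sigma\in G(\xi')$, the sequence $(p_\sigma(n))_n$ converges both to $\xi$ and to $\xi'$ in $\partial Y$, and since $\partial Y$ is Hausdorff we get $\xi=\xi'$. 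This proves injectivity.

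For Borel-ness, I would show that the graph $\mathrm{Gr}(\Phi)=\{(\xi,\sigma):\Phi(\xi)=\sigma\}\subseteq\partial Y\times S^{\N}$ is co-analytic, and then invoke the standard consequence of the Luzin--Souslin theorem that a function between standard Borel spaces with co-analytic graph is Borel: for Borel $A\subseteq S^{\N}$ the set $\Phi^{-1}(A)=\{\xi:\forall\sigma\,((\xi,\sigma)\in\mathrm{Gr}(\Phi)\Rightarrow\sigma\in A)\}$ is co-analytic, and likewise $\Phi^{-1}(S^{\N}\setminus A)=\partial Y\setminus\Phi^{-1}(A)$ is co-analytic, so $\Phi^{-1}(A)$ is Borel. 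To analyze the graph, consider the \emph{representation relation} $B=\{(\xi,\sigma):\sigma \text{ is the label of a geodesic ray } p_\sigma \text{ in } X \text{ based at }1 \text{ with } p_\sigma\in G(\xi)\}$. Granting that $B$ is Borel, the definition of $\sigma_\xi$ as the lexicographically least label in $G(\xi)$ gives
\[ (\xi,\sigma)\in\mathrm{Gr}(\Phi)\iff (\xi,\sigma)\in B \ \wedge\ \forall\sigma'\in S^{\N}\,\big((\xi,\sigma')\in B\Rightarrow \sigma\preceq\sigma'\big), \]
where $\preceq$ is the lexicographic order on $S^{\N}$, whose graph is closed, hence Borel. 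The universally quantified clause is co-analytic and $B$ is Borel, so $\mathrm{Gr}(\Phi)$ is co-analytic and $\Phi$ is Borel.

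It remains to check that $B$ is Borel, which is the technical heart of the argument. The set $D\subseteq S^{\N}$ of labels $\sigma$ for which $p_\sigma$ is a geodesic ray in $X$ converging to $\partial Y$ is Borel: the requirement that every prefix be geodesic, i.e.\ $d_X(1,p_\sigma(n))=n$ for all $n$, depends for each $n$ on only the first $n$ letters of $\sigma$ and so cuts out a closed set, while by Lemma \ref{lem:geod ray converging in Y} convergence to $\partial Y$ is equivalent to $\sup_n d_Y(1,p_\sigma(n))=\infty$, which is the Borel condition $\bigcap_k\bigcup_n\{\sigma:d_Y(1,p_\sigma(n))\ge k\}$ (each set clopen, since $d_Y(1,p_\sigma(n))$ depends only on $\sigma|_n$). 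On $D$ the endpoint map $\partial\colon\sigma\mapsto\lim_n p_\sigma(n)\in\partial Y$ is well defined and single-valued by Lemma \ref{lem:geod ray converging in Y}, and $B$ is exactly its graph. Thus it suffices to show $\partial$ is Borel, and this is the step I expect to be the main obstacle. I would argue it by noting that $\sigma\mapsto(p_\sigma(n))_n$ is continuous from $D$ into the Borel set of vertex sequences converging to infinity in $Y$ (Borelness expressed through the integer-valued, hence continuous, Gromov products $(p_\sigma(m),p_\sigma(n))_1$, each depending on finitely many coordinates), and that the quotient map from this set onto the standard Borel space $\partial Y$ — induced by the equivalence relation $\sim$ defining $\partial Y$ — is Borel. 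Composing these yields that $\partial$, and hence $B$, is Borel; everything else then follows from the co-analytic-graph criterion above.
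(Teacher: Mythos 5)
Your injectivity argument is correct and agrees with (indeed, fleshes out) the paper's one-line justification. The fatal problem is in the measurability half: the ``standard consequence of Luzin--Souslin'' you invoke --- that a total function between standard Borel spaces with \emph{co-analytic} graph is Borel --- is false in ZFC, and your supporting quantifier computation breaks down at exactly the relevant step. In $\Phi^{-1}(A)=\{\xi:\forall\sigma\,((\xi,\sigma)\in\mathrm{Gr}(\Phi)\Rightarrow\sigma\in A)\}$ the matrix of the quantifier is $\bigl(\complement\,\mathrm{Gr}(\Phi)\bigr)\cup(\partial Y\times A)$; if $\mathrm{Gr}(\Phi)$ is merely co-analytic, this matrix is \emph{analytic} ($\Sigma^1_1$), and a universal quantifier applied to an analytic set lands in $\Pi^1_2$, not in the co-analytic sets. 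The criterion you want is the classical analytic-graph theorem, which needs $\mathrm{Gr}(\Phi)$ to be analytic (equivalently Borel). The co-analytic version genuinely fails: take a closed $F\subseteq\N^{\N}\times\N^{\N}$ with full projection admitting no Borel uniformization (such $F$ exist, which is why Jankov--von Neumann gives only $\sigma(\Sigma^1_1)$-measurable selectors); Kond\^o's theorem uniformizes $F$ by the graph of a total function with $\Pi^1_1$ graph, and this function cannot be Borel. Your construction produces precisely a co-analytic graph, because of the clause $\forall\sigma'\,((\xi,\sigma')\in B\Rightarrow\sigma\preceq\sigma')$, so the argument as written does not establish Borelness of $\Phi$.

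The missing ingredient --- and the route the paper takes --- is compactness of the sections: $G(\xi)$ is compact in $S^{\N}$ (Lemma \ref{2}) and the set $A=\{(\xi,p):p\in G(\xi)\}$ (your $B$) is closed (Lemma \ref{closed}, proved by a direct geometric convergence argument). Given Borel $B$ with compact sections, the map $\xi\mapsto G(\xi)$ into the hyperspace $K(S^{\N})$ is Borel by \cite[Theorem 28.8]{Kech95}, and the lexicographic-minimum map on $K(S^{\N})$ is continuous (as in Naryshkin--Vaccaro), so $\Phi$ is Borel as the composition. Equivalently, in your framework, compactness of $B_\xi$ rescues the complexity count: the sets $\{\xi:\exists\sigma'\in B_\xi \text{ with } \sigma'|_n=w\}$ are projections of Borel sets with compact sections, hence Borel (Novikov/Saint Raymond), so the minimality clause becomes a countable conjunction of Borel conditions and $\mathrm{Gr}(\Phi)$ is in fact Borel, after which the correct analytic-graph criterion applies. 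Note also a secondary soft spot: your Borelness of $B$ rests on the asserted but unproven claim that the endpoint map from sequences converging to infinity onto $\partial Y$ is Borel; the paper sidesteps this by proving closedness of $A$ directly from the small-cancellation geometry, which simultaneously supplies the compactness needed above. Without compactness (which you never use), the argument cannot be completed along the lines you propose.
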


To prove \ref{Borel inj}, we closely follow the arguments of \cite[Proposition 3.3]{NaryshVacc}. Recall that we identify geodesic rays in $G(\xi)$ with their labels in $S^{\N}$.

\begin{lem}
    \label{closed}
    The set $A = \{(\xi, p) \in \partial Y \times S^{\N}: p \in G(\xi)\}$ is closed in $\partial Y \times S^{\N}$.
\end{lem}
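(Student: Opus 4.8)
The plan is to prove closedness sequentially, which suffices since $\partial Y \times S^{\N}$ is metrizable ($\partial Y$ is Polish and $S^{\N}$ is compact metrizable as $S$ is finite). So I would take a sequence $(\xi_n, p_n) \in A$ with $(\xi_n, p_n) \to (\xi, p)$ and show $(\xi, p) \in A$; here $p_n \in G(\xi_n)$, $\xi_n \to \xi$ in $\partial Y$, and $p_n \to p$ in $S^{\N}$. First I would check that $p$ is a geodesic ray in $X$ from $1$: since convergence in the product topology with $S$ discrete means that for every $m$ there is $N(m)$ with $p_n(k) = p(k)$ for all $k \le m$ and $n \ge N(m)$, every initial segment of $p$ coincides with an initial segment of some geodesic $p_n$ and is therefore itself geodesic in $X$; as being geodesic is detected on finite segments, $p$ is a geodesic ray. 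This part is closed and does not involve $\xi$.

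The heart of the argument is to show that $p$ represents $\xi$, i.e.\ that $(p(m), \xi)_1 \to \infty$ in $Y$. I would first establish the lower bound $(p(m), \xi)_1 \ge d_Y(1, p(m)) - \frac12 - \delta'$, where $\delta'$ depends only on the hyperbolicity constant of $Y$. Fix $m$ and take $n \ge N(m)$, so that $p(m) = p_n(m)$ and $d_Y(1, p(m)) = d_Y(1, p_n(m))$. Applying Lemma \ref{Geod decomp} to the initial segments of the geodesic $p_n$, for every $k \ge m$ one gets $d_Y(p_n(m), p_n(k)) \le d_Y(1, p_n(k)) - d_Y(1, p_n(m)) + 1$, whence $(p_n(m), p_n(k))_1 \ge d_Y(1, p_n(m)) - \frac12$. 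Since $p_n(k) \to \xi_n$, this gives $(p(m), \xi_n)_1 \ge d_Y(1, p(m)) - \frac12$. Then the hyperbolic inequality for Gromov products extended to the boundary, together with $(\xi_n, \xi)_1 \to \infty$ (which is what $\xi_n \to \xi$ means), yields the claimed bound upon choosing $n \ge N(m)$ with $(\xi_n, \xi)_1 > d_Y(1, p(m))$.

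It remains to show $d_Y(1, p(m)) \to \infty$, which then combines with the bound above to give $(p(m), \xi)_1 \to \infty$ and hence $p \in G(\xi)$. This is the step I expect to be the main obstacle, because convergence in $S^{\N}$ is weak: it only controls finite prefixes, that is, the $X$-geodesic structure, so a priori the limit ray $p$ could collapse to a $Y$-bounded ray even though each $p_n$ escapes to infinity in $Y$. To rule this out I would prove the auxiliary geometric fact that every infinite geodesic ray in $X$ from $1$ has $d_Y(1, p(m)) \to \infty$. Suppose not; since $d_Y(1, p(m))$ is non-decreasing (Lemma \ref{Geod decomp}) it would be bounded by some $M$, so by Lemma \ref{Geod decomp} each segment $p_{[1, p(m)]}$ is covered by at most $M$ relators and non-relator edges. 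By Lemma \ref{geodesic subpath} the ray meets each relator in a subpath, and being an infinite geodesic it meets infinitely many distinct relators (or non-relator edges) in order. A counting argument using the finiteness of each relator (a bounded covering of an arbitrarily long segment forces a relator carrying an arbitrarily long intersection with $p$, which cannot recur as relators are finite) then forces $p$ to pass, in order, through infinitely many distinct relators in which it has long intersections; since non-consecutive relators of a geodesic sequence are disjoint by \cite[Remark 3.7]{Gruber_Sisto}, each such passage raises $d_Y(1, \cdot)$, contradicting boundedness. With $d_Y(1, p(m)) \to \infty$ in hand we conclude $(p(m), \xi)_1 \to \infty$, so $p \in G(\xi)$ and $(\xi, p) \in A$, proving that $A$ is closed.
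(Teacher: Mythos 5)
Your steps 1 and 2 are sound, and the computation in step 2 is essentially the same Gromov-product estimate the paper itself uses inside the proof of Lemma \ref{lemma: geodesic representatives for boundary}. The gap is exactly where you feared it: the auxiliary fact on which step 3 rests --- that \emph{every} infinite geodesic ray in $X$ from $1$ satisfies $d_Y(1,p(m))\to\infty$ --- is false, and this is precisely why the paper states Lemma \ref{lem:geod ray converging in Y} as an equivalence of conditions rather than as an unconditional property of rays. Concretely: fix an infinite word $w$ over $\{a,b\}$ with prefixes $w_k$ of length $k$, and take the classical presentation whose relators are cycles $r_k = w_k t_k$, where the $t_k$ are words of length $9k+10$ over fresh letters $\{c,d\}$ chosen with short mutual overlaps. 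Any common factor of two relators (or repeated factor of one) coming from the $w$-parts has length at most $k < \frac{1}{10}\vert r_k\vert$, so this is a $C'(1/10)$ presentation, and even an extremely fine one, the components being cycles. Each $w_m$ is geodesic: a subword of $w_m$ lying on a relator avoids $c,d$ and so has length less than $\frac{1}{10}\vert r_k\vert + 1$, while a geodesic side of a comparison diagram can carry at most half a relator, so no Greendlinger cell fits. Yet the ray $p$ from $1$ labelled by $w$ has $d_Y(1,p(k)) \le 1$ for \emph{all} $k$, since $p(k)$ lies on the embedded copy of the relator $r_k$ through $1$. The flaw in your counting argument is an implicit uniformity: the relator carrying the long intersection with $p_{[1,p(m)]}$ may change with $m$, and since the components $\Gamma_n$ have unbounded size, the finiteness of each individual relator gives no uniform bound; the minimal decompositions of Lemma \ref{Geod decomp} for different prefixes need not be nested or compatible.

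The missing ingredient must exploit the hypotheses $p_n \in G(\xi_n)$ and $\xi_n \to \xi$ quantitatively, not merely the fact that the limit is an $X$-geodesic ray. This is what the paper's proof does: it fixes a reference ray $p \in G(\xi)$ (nonempty by Lemma \ref{lemma: geodesic representatives for boundary}) with its decomposition into subpaths $r_i \subseteq \Theta_i$, chooses for each $n$ an index $i$ with $(a_i^j, b_i)_1 > n+\delta+2$ for all $j>i$, and uses Strebel's classification (Theorem \ref{thm: Strebel's bigon and triangle classification}) together with the small cancellation condition to trap every vertex $v \in p_j$ with $d_Y(1,v)=n$ inside the \emph{finite} set $\Theta_{n-1}\cup\Theta_n\cup\Theta_{n+1}$. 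A diagonal extraction then yields a subsequential limit $q$ that comes equipped with vertices $(v_n)$ satisfying $d_Y(p,v_n)\le 1$ and $d_Y(v_m,v_n)=\vert m-n\vert$, whence $q\in G(\xi)$, and uniqueness of limits in $S^{\N}$ forces the original limit $\gamma$ to equal $q$. In other words, the $Y$-unboundedness of the limit ray is a \emph{conclusion} of this trapping argument, obtained together with the convergence to $\xi$; your scheme contains no ingredient playing this role, so step 3 cannot be repaired without importing something like the paper's finiteness argument.
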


\begin{proof}

Let $(\xi_n, p_n)_n \subset A$ converge to $(\xi, \gamma) $ in $\partial Y \times S^{\N}$. 

    We show that there exists a subsequence $(p_{n_k})_k$ of $(p_n)_n$ and a geodesic ray $q \in G(\xi)$ such that $p_{n_k} \to q$.

    Let $p \in G(\xi)$ be arbitrary. For each $n \in \N$, let $(a_m^n)_m \subset p_n$ be a sequence of vertices on $p_n$ as in Lemma \ref{lem:geod ray converging in Y} (3). Similarly, for each $n$, let $(b_n)_n \subset p$ be a sequence of vertices on $p$ and $(r_i)_i$ a sequence of subpaths as in Lemma \ref{lem:geod ray converging in Y} (3). Denote $\Theta_i$ a fixed relator containing \textcolor{black}{$r_i$ or the subpath $r_i$ itself} if $r_i$ is not contained in a relator. 
    
    Fix $n \in \N$. Since $\xi_i \to \xi$, there exists $i \in \N$ such that for all $j > i$, we have $(a_i^j, b_i)_1 > n + \delta + 2$. 

    Letting $\mathcal{A}_n := \{v \in p_j : j > i \text{ and } d_Y(1,v) = n\}$ and arguing as in the proof of Lemma \ref{lemma: geodesic representatives for boundary}, we have that $\mathcal{A}_n \subseteq \Theta_{n-1} \cup \Theta_n \cup \Theta_{n+1}$, hence $\mathcal{A}_n$ is finite. A compactness argument as in the proof of Lemma \ref{lemma: geodesic representatives for boundary} yields that there exists a subsequence of $(p_i)_i$ which converges in $S^{\N}$ to a geodesic ray $q$ in $X$ containing a sequence of vertices $(v_n)_n$ such that for each $n \in \N$, $d_Y(p,v_n) \leq 1$ and $d_Y(1,v_n) = n$ for all $n \in \N$. Therefore, $q \in G(\xi)$. 
    %d_Y(v_m,v_n) = \vert n - m \vert$ for all $n, m \in \N$. 

    Since $p_n \to \gamma$ and a subsequence of $(p_n)_n$ converges to $q$, we have $\gamma = q$. Hence, $\gamma \in G(\xi)$. 

    We conclude that $A$ is closed. 
    
\end{proof}

\begin{proof}[Proof of Lemma \ref{Borel inj}]

    First, recall that $G(\xi)$ is a compact subset of $S^{\N}$ by Corollary \ref{cor:G(xi) is compact} (2). 

    Let $\mathcal{K}$ denote the space of compact subsets of $S^{\N}$ with the Vietoris topology (see \cite[$\mathsection$I.4.4]{Kech95}). Define a map $\psi: \partial Y \to \mathcal{K}$ by $\xi \mapsto G(\xi)$. We show that $\psi$ is Borel. 

    By Lemma \ref{closed}, we have that the set $$A = \{(\xi, p) \in \partial Y \times S^{\N}: p \in G(\xi)\}$$ is closed in $\partial Y \times S^{\N}$.

    Furthermore,  for each $\xi \in \partial Y$, the section: 

    $$A_{\xi} := \{p \in S^{\N} : p \in G(\xi)\}$$

    is equal to $G(\xi)$, which is compact by Corollary \ref{cor:G(xi) is compact} (2). Therefore, by \cite[Theorem 28.8]{Kech95}, the map $\psi: \xi \mapsto G(\xi) = A_{\xi}$ is Borel.

    Now define the map $\rho : \mathcal{K} \to S^{\N}$ defined by $K \mapsto \min_{\leq_{\mathrm{lex}}}(K)$ if $K \neq \emptyset$, where $\min_{\leq_{\mathrm{lex}}}(K)$ denotes the lexicographically least element of $K$ (which exists by Lemma \ref{3}). If $K = \emptyset$, then we define $\rho(K) = (s_M, s_M,\ldots)$, where $s_M$ is the largest element of $S$. By the proof of \cite[Proposition 3.3]{NaryshVacc}, we have that $\rho$ is continuous, hence Borel.

    Therefore, the map $\Phi = \rho \circ \psi$ is Borel. We have that $\Phi$ is injective since geodesic rays from 1 are uniquely determined by their labels.  
    
\end{proof}

\subsection{Proof of the Main Theorem}%\DO{there is no Theorem 1 now} \textcolor{black}{Chris: fixed. I also added a paragraph below to introduce the subsection.}

In this section, we define the notion of an ``extremely fine'' graph (Definition \ref{def: extremely fine})\hypersetup{linkcolor = black} and we show that as a consequence of the underlying graph being extremely fine, the action of the graphical small cancellation group on the boundary of its coned-off Cayley graph induces a hyperfinite orbit equivalence relation, proving the \hyperlink{Main Theorem}{Main Theorem}.
\hypersetup{linkcolor = red}
\begin{defn}
    Denoting $E_t$ the tail equivalence relation on $S^{\N}$ (see Definition \ref{def:tail equivalence}), define the relation $R_t' = \Phi^{-1}(E_t)$ on $\partial Y$ by $\xi R_t' \eta \iff \sigma_{\xi} E_t \sigma_{\eta}$. Since $E_t$ is hyperfinite by Proposition \ref{prop:DJK} and since $\Phi$ is a Borel injection, it follows that $R_t'$ is hyperfinite. By \cite[Proposition 1.3 (i)]{JKL02}, it follows that $R_t := E_G \cap R_t'$ is hyperfinite. 
\end{defn}    

In Proposition \ref{prop: finite index} below, we assume that the graph $\Gamma := \coprod_n \Gamma_n$ satisfies the following property, which we call \emph{extreme fineness}, a strengthening of the notion of \emph{fineness} of a graph defined by Bowditch \cite{BowditchRelHyp} (see Figure \ref{fig:bounded page}).   

\begin{defn}
    \label{def: extremely fine}
    A graph $\Gamma$ is \textbf{extremely fine} if there exists $K \in \N$ such that for every edge $e$ in $\Gamma$, there are at most $K$ simple closed paths $\gamma$ containing $e$.
\end{defn}

\begin{figure}[h]
    \centering
    \includegraphics[width=0.2\linewidth]{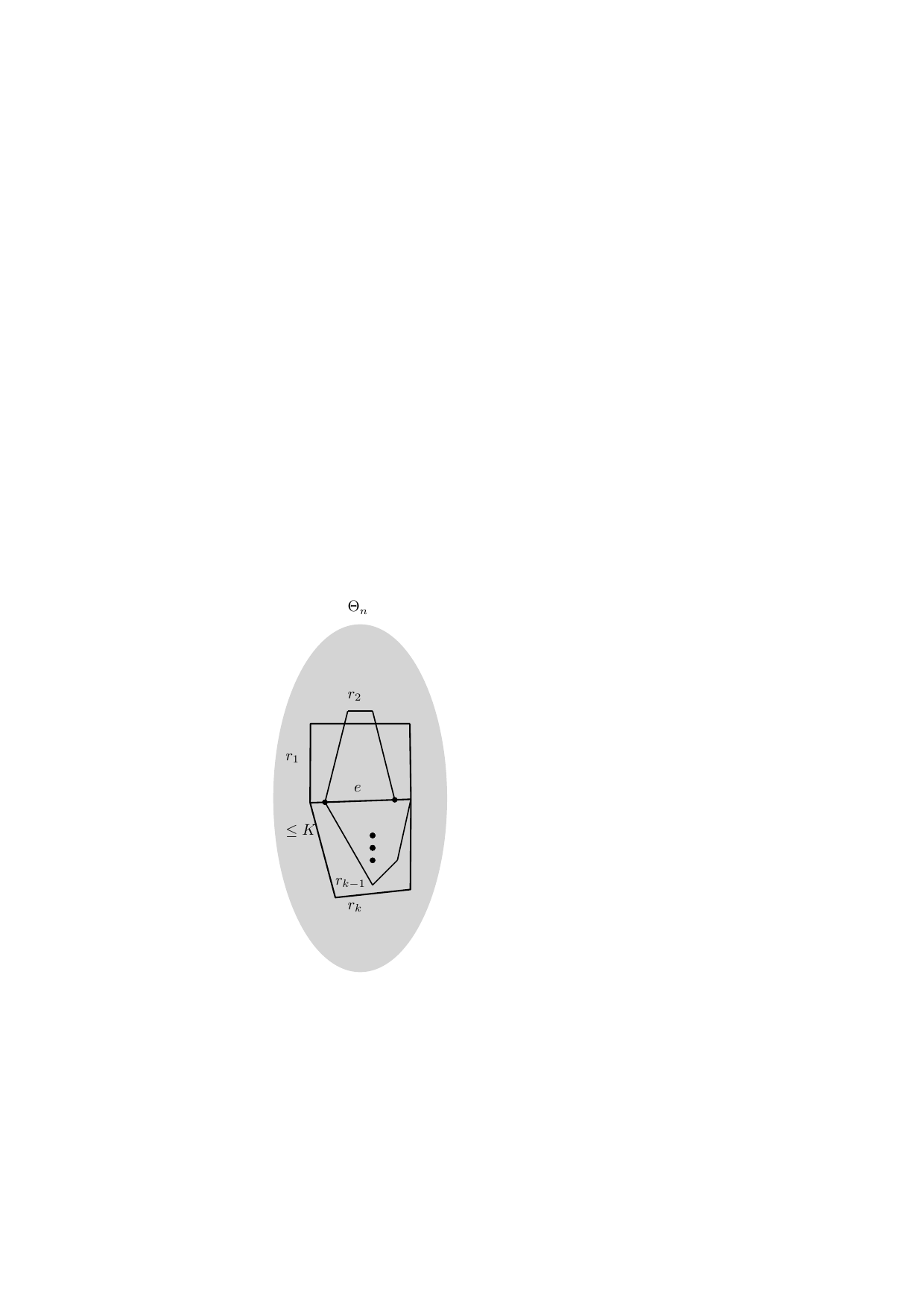}
    \caption{Extreme fineness of a graph. There is a uniform constant $K$ such that there are at most $K$ simple closed paths sharing the same edge $e$.}
    \label{fig:bounded page}
\end{figure}

Extreme fineness implies that for each geodesic path $p$ in a relator $\Theta$, there are at most $K$ contours in $\Theta$ containing any given edge of $p$. Note that every classical small cancellation presentation has an extremely fine underlying graph (which is a disjoint union of simple closed paths), since each edge is contained in a unique simple closed path (a single contour).

\begin{prop}
    \label{prop: finite index}
    With the notation as above, let $G = \langle S \vert \Gamma_n : n \in \N \rangle$ with $S$ countable be a $C'(\lambda)$ graphical small cancellation presentation with $\lambda \leq \frac1{10}$ and the graph $\Gamma = \coprod_n \Gamma_n$ being extremely fine (Definition \ref{def: extremely fine}). Then there exists $K > 0$ such that each $E_G$-class in $\partial Y$ contains at most $K$ $R_t$-classes. 
\end{prop}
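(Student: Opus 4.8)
The plan is to reduce the statement to a uniform bound on the number of tail-equivalence classes of labels of geodesic rays converging to a fixed boundary point, and then to extract that bound from extreme fineness. First I would unwind the definitions. Since $R_t = E_G \cap R_t'$ and $\xi \mathrel{R_t'} \eta \iff \sigma_\xi \mathrel{E_t} \sigma_\eta$, two points $g\xi, h\xi$ of the single $E_G$-class $[\xi]_{E_G} = \{g\xi : g \in G\}$ are $R_t$-equivalent if and only if $\sigma_{g\xi} \mathrel{E_t} \sigma_{h\xi}$. Hence the number of $R_t$-classes inside $[\xi]_{E_G}$ equals the number of distinct $E_t$-classes occurring in $\{\sigma_{g\xi} : g \in G\}$, and it suffices to bound this number uniformly in $\xi$.

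Next I would exploit that $G$ acts on $X = \mathrm{Cay}(G,S)$ by label-preserving isometries. For each $g$, the map $r \mapsto g^{-1}r$ is a label-preserving bijection from the set of geodesic rays from $1$ to $g\xi$ onto the set of geodesic rays from $g^{-1}$ to $\xi$; since it preserves labels in $S^{\N}$, the lexicographically least ray is carried to the lexicographically least ray, so $\sigma_{g\xi}$ is exactly the lexicographically least label among all geodesic rays from the vertex $g^{-1}$ to $\xi$. Writing $b = g^{-1}$ and letting $\mathcal{T}_b(\xi)$ denote the set of $E_t$-classes of labels of geodesic rays from $b$ to $\xi$, we obtain $[\sigma_{g\xi}]_{E_t} \in \mathcal{T}_b(\xi)$. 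As $g$ ranges over $G$, the basepoint $b$ ranges over all of $G$, so the desired bound follows once I establish the key claim: there is a set $\mathcal{T}(\xi)$ of $E_t$-classes, of cardinality at most a constant $K'$ independent of $\xi$, with $\mathcal{T}_b(\xi) = \mathcal{T}(\xi)$ for every basepoint $b$.

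For the key claim I would argue as follows. By the ``moreover'' part of Corollary \ref{cor: form of geodesics in bundle} together with the hyperbolicity of $Y$, every geodesic ray converging to $\xi$, from any basepoint, eventually traverses a common terminal segment of the relator sequence $(\Theta_n)$ of $\xi$ in order, entering $\Theta_n$ through $\Theta_{n-1} \cap \Theta_n$ and leaving through $\Theta_n \cap \Theta_{n+1}$; and by the same Strebel-classification reasoning used in Corollary \ref{cor: form of geodesics in bundle}, two such rays sharing a tail of the relator sequence bound a ladder of contours. Consequently the tail of the label of such a ray is determined by the infinite sequence of local passages through the relators $\Theta_n$, and the set of admissible local passages at $\Theta_n$ depends only on the triple $(\Theta_{n-1}, \Theta_n, \Theta_{n+1})$ and not on $b$; this yields $\mathcal{T}_b(\xi) = \mathcal{T}(\xi)$. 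To bound $|\mathcal{T}(\xi)|$ I would invoke Definition \ref{def: extremely fine}: since each edge lies on at most $K$ contours, consecutive geodesic rays in the bundle differ along contours sharing edges, so the terminal ladder of the bundle has width bounded in terms of $K$, capping the number of distinct label-tails. Thus $|\mathcal{T}(\xi)| \le K'$ for a constant $K'$ depending only on $K$, and $\{\sigma_{g\xi} : g \in G\}$ meets at most $K'$ classes of $E_t$, i.e.\ $[\xi]_{E_G}$ contains at most $K'$ classes of $R_t$.

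The main obstacle is the bound $|\mathcal{T}(\xi)| \le K'$ via extreme fineness: I must show that extreme fineness forces the bundle of geodesic rays to $\xi$ to look, near $\xi$, like a ladder of uniformly bounded width, so that only boundedly many label-tails occur. The delicate point is that \emph{a priori} the entry and exit vertices and the geodesic taken inside each relator $\Theta_n$ could vary from relator to relator, and one must combine the small cancellation estimate that each shared contour $r$ satisfies $\vert p \cap r\vert > 3\lambda\,\girth(\Theta_n)$ (as in the proof of Lemma \ref{lemma: geodesic representatives for boundary}) with the ``at most $K$ contours per edge'' bound to control this branching uniformly, and thereby conclude that the tail-class stabilizes into one of at most $K'$ possibilities.
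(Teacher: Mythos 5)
Your opening reduction is sound and matches the paper's setup in spirit: the paper likewise translates the lexicographically least rays $\alpha_i \in G(\xi_i)$ by the elements $g_i$ and replaces them by rays based at $1$ via Lemma \ref{lem: moving geodesic basepoint}, so that everything is compared inside the bundle over a single boundary point. The proof breaks at your key claim, namely that the set $\mathcal{T}_b(\xi)$ of $E_t$-classes of labels of \emph{all} geodesic rays from $b$ to $\xi$ has cardinality bounded by a constant. Extreme fineness bounds the number of contours through a fixed edge, but it does not bound the branching of the bundle itself: by Corollary \ref{cor: form of geodesics in bundle}, two rays in $G(\xi)$ may bound an \emph{infinite} ladder of contours, and both sides of each rung can be geodesic (the estimates $\vert r \cap p\vert, \vert r \cap q \vert > (\frac12 - 2\lambda)\vert r\vert$ are compatible since $1 - 4\lambda < 1$); already in the classical case, where $K_0 = 1$, a bundle can contain rays making independent choices inside infinitely many relators, whose labels then differ in infinitely many coordinates and hence represent infinitely many (even uncountably many) $E_t$-classes. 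So ``bounded ladder width'' does not cap the number of label-tails --- a width-one infinite ladder already carries non-tail-equivalent labels --- and your step deducing $\vert\mathcal{T}(\xi)\vert \le K'$ from fineness has no proof and is false as stated. A related slip: saying rays enter $\Theta_n$ ``through $\Theta_{n-1} \cap \Theta_n$'' gives no finite bound either, since that intersection is a piece, of size less than $\lambda\,\girth(\Theta_n)$ but of unbounded cardinality as the relators grow; the paper's finite entry set is \emph{not} this intersection.

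What rescues the argument --- and what your sketch introduces but never uses after the second paragraph --- is lex-minimality. The paper shows that any ray in the bundle enters $\Theta_n$ either through the vertex $(p \cap \Theta_n)_-$ of the reference ray $p$, or through a vertex of a contour $r \subset \Theta_{n-1}$ that is forced to contain one specific edge $s_{n-1}$ of $p$ (this is where $\vert p \cap r \vert > 3\lambda\,\girth(\Theta_{n-1})$ is used), so extreme fineness yields at most $1 + K_0$ possible entry vertices per relator. It then takes $K = (1+K_0)^2 + 1$ pairwise $R_t$-inequivalent points, chooses $L$ so that the labels of the rays between their entries into $\Theta_N$ and into $\Theta_{N+L}$ are pairwise distinct, and pigeonholes over the $(1+K_0)^2$ pairs of entry vertices: two translated lex-least rays must share both entry vertices, and lex-minimality forces their segments between these two shared vertices to coincide (otherwise one could splice in the lex-smaller segment and contradict minimality), contradicting the choice of $L$. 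In short, the counting must be over entry vertices of the finitely many lex-least representatives, at two scales, combined with the rigidity of lex-least segments between shared vertices; counting tail-classes of arbitrary rays, as you propose, cannot work, because that set is not finite.
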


Before we begin the proof of Proposition \ref{prop: finite index}, we will need the following elementary lemma. For the proof, see for instance \cite[Lemma 3.1]{MS20}.

\begin{lem}
    \label{lem: moving geodesic basepoint}
    Let $\Theta$ be a connected graph and let $x,y$ be vertices in $\Theta$. For every geodesic ray $\gamma$ in $\Theta$ based at $x$, there exists a geodesic ray $\lambda$ based at $y$ which eventually coincides with $\gamma$. 
\end{lem}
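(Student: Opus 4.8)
The plan is to avoid any compactness or König's-lemma argument and instead to construct $\lambda$ directly, by concatenating a single geodesic from $y$ to a far-enough vertex of $\gamma$ with the corresponding tail of $\gamma$. The device that drives everything is to track the \emph{defect} $c_n := d(y,\gamma(n)) - n$, where $d$ is the graph metric on $\Theta$ and $\gamma(n)$ is the $n$-th vertex of $\gamma$, so that $d(x,\gamma(n)) = n$ because $\gamma$ is a geodesic ray.

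First I would observe that $(c_n)_n$ is bounded: the triangle inequality gives $|d(y,\gamma(n)) - d(x,\gamma(n))| \le d(x,y)$, so $|c_n| \le d(x,y)$ for all $n$. Next, and this is the crux, I would show that $(c_n)_n$ is non-increasing. Since $\gamma(n)$ and $\gamma(n+1)$ are adjacent, $d(y,\gamma(n+1)) \le d(y,\gamma(n)) + 1$, whence $c_{n+1} - c_n = \bigl(d(y,\gamma(n+1)) - d(y,\gamma(n))\bigr) - 1 \le 0$. As $(c_n)_n$ is integer-valued, bounded below, and non-increasing, it is eventually constant: there exist $N \in \N$ and $c \in \Z$ with $c_n = c$ for all $n \ge N$.

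This eventual constancy is exactly the alignment condition needed. For every $n \ge N$ we have $d(y,\gamma(n+1)) = (n+1) + c = d(y,\gamma(n)) + 1$, so $\gamma(n)$ lies on a geodesic from $y$ to $\gamma(n+1)$. Concretely, I would fix a geodesic $\mu$ in $\Theta$ from $y$ to $\gamma(N)$ (of length $N + c = d(y,\gamma(N))$) and set $\lambda := \mu \cdot \gamma_{[\gamma(N),\infty]}$, i.e.\ follow $\mu$ from $y$ to $\gamma(N)$ and then follow the terminal subray of $\gamma$. It then remains to verify that $\lambda$ is a geodesic ray: any initial segment ending inside $\mu$ is geodesic because $\mu$ is; and the initial segment of $\lambda$ ending at $\gamma(N+j)$ for $j \ge 0$ has length $(N+c) + j = d(y,\gamma(N+j))$, hence is geodesic as well. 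By construction $\lambda$ coincides with $\gamma$ from $\gamma(N)$ onward, which gives the claim.

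I expect the only subtle point to be the monotonicity of $(c_n)_n$; everything after it is bookkeeping. I would emphasize that this argument uses only that $\Theta$ is a connected graph with the integer-valued graph metric and that $\gamma$ is a genuine geodesic ray, and that it requires neither local finiteness nor properness of $\Theta$, precisely because no limit of geodesics is ever taken — the finite geodesic $\mu$ together with the tail of $\gamma$ already assembles the desired ray.
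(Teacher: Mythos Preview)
Your argument is correct. The paper itself does not supply a proof of this lemma; it simply cites \cite[Lemma~3.1]{MS20} and calls the statement elementary. Your self-contained argument---tracking the integer-valued, bounded, non-increasing defect $c_n = d(y,\gamma(n)) - n$ until it stabilizes, and then concatenating a geodesic $y \to \gamma(N)$ with the tail of $\gamma$---is exactly the kind of direct proof one would expect here, and it correctly avoids any local-finiteness or compactness hypotheses. There is nothing to add or correct.
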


\begin{proof}[Proof of Proposition \ref{prop: finite index}]
    
    Let $K_0$ be a constant witnessing extreme fineness of $\Gamma$ i.e.\ such that in each $\Gamma_n$, there are at most $K_0$ contours sharing a common edge. Put $K = (1+K_0)^2 + 1$. Suppose for contradiction that there exist $\xi_0,\xi_1,\ldots, \xi_K \in \partial Y$ that are in the same $E_G$-class but are pairwise $R_t$-inequivalent. For each $i = 1,\ldots,K$, let $g_i \in G$ be such that $g_i \xi_i = \xi_0$, and let $\alpha_i \in G(\xi_i)$ be the geodesic ray in $X$ with the lexicographically least label representing $\xi_i$. Put $p:=\alpha_0$ and for each $i=1,\ldots,K$, put $p_i$ to be a geodesic ray in $X$ from 1 which eventually coincides with $g_i \alpha_i$ (c.f.\ Lemma \ref{lem: moving geodesic basepoint}). %Let $p_i$ be the common terminal subray of $q_i$ and $g_i \alpha_i$. 
    
    Fix a sequence $(r_i)_i$ of subpaths of $p$ as in Lemma \ref{lem:geod ray converging in Y} (3). Denote $\Theta_i$ a fixed relator containing \textcolor{black}{$r_i$ or the subpath $r_i$ itself} if $r_i$ is not contained in a relator.

    By Lemma \ref{lem: form of geodesics in bundle}, we have that for each $i = 1,\ldots,K$, $p_i$ is contained in $\cup_{n \in \N} \Theta_n$ and $p_i \cap \Theta_n \neq \emptyset$ for each $n$. Since each $g_i \alpha_i$ eventually coincides with $p_i$, there exists $N_i \in \N$ such that $g_i \alpha_i$ is eventually contained in $\cup_{n \geq N_i} \Theta_{n}$, and each $g_i \alpha_i$ intersects all of the subgraphs $\Theta_n$ for $n \geq N_i$. 

    \begin{figure}[h]
        \centering
        \includegraphics[width=0.55\linewidth]{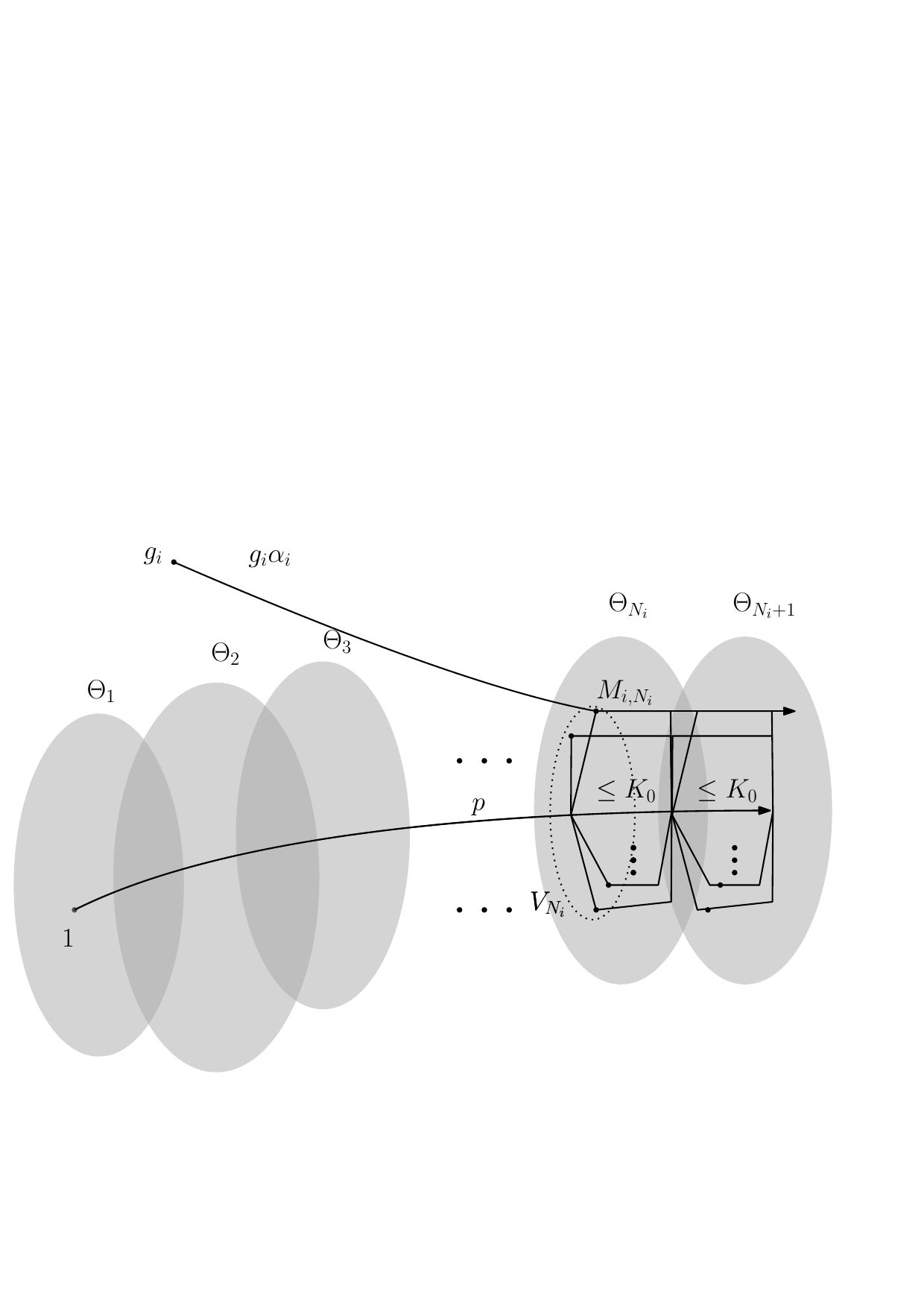}
        \caption{Each geodesic ray $p_i$ (which eventually coincides with $g_i \alpha_i$) eventually passes through all subgraphs $\Theta_n$ for $n \geq N_i$. It can enter each $\Theta_n$ via a set $V_n$ of \textcolor{black}{at most $1+K_0$ vertices.}}
        \label{fig:bounded page entry points}
    \end{figure}

    Let $N = \max \{N_i : i = 1,\ldots,K\}$, so that each geodesic ray $g_i \alpha_i$ eventually passes through $\Theta_n$ for all $n \geq N$, either traversing $\Theta_n$ through a sequence of contours along $p$, or coinciding with $p$. 

    We will show that for each $n \geq N$, we have $\vert \{(p_i \cap \Theta_n)_- : i = 1,\ldots,K\} \vert \leq 1+K_0$.

    %Given a relator $\Theta_i$ for $i \in \mathcal{A}$, define the \textbf{core} of $\Theta_i$ to be the subsegment $s_i$ of $\Theta_i \cap p$ obtained by removing from $\Theta_i \cap p$ its initial and terminal segments of length $\lfloor \lambda \girth(\Theta_i) \rfloor$. Note that $\vert s_i \vert > 0$ since $i \in \mathcal{A}$, so that $\vert \Theta_i \cap p \vert > 2 \lambda \girth(\Theta_i)$. 
    %since no three $\Theta_i$ can pairwise intersect (c.f.\ \cite[Remark 3.7]{Gruber_Sisto}), since $(\Theta_i)_i$ is a geodesic sequence. 
    %Note if $p_i$ passes through a contour $r$ contained in a relator $\Theta_n$, then $r$ must contain $s_{n-1}$ on $p$, since $p_i$ must enter $\Theta_n$ through a vertex in $\Theta_{n-1} \cap \Theta_n$ and exit $\Theta_n$ in a vertex of $\Theta_n \cap \Theta_{n+1}$, and we have for each $i \in \{n-1,n+1\}$ that $\vert \Theta_n \cap \Theta_i \vert < \lambda \girth(\Theta_n)$, so that the segments $\Theta_n \cap \Theta_i \cap p$ are included in the segments of $p$ that we remove from $\Theta_n \cap p$ to define the core. 

    For each $n$, let $s_{n-1}$ be the edge of the finite connected component of $p \setminus \Theta_n$, which is a path, that is at greatest distance in $X$ from 1 in the path. We either have that $(\Theta_n \cap p_i)_- = (\Theta_n \cap p)_-$ (i.e.\ $p_i$ enters $\Theta_n$ through $p$; see Figure \ref{fig:no contours in n-1}) or $(\Theta_n \cap p_i)_- \in r$ for some contour $r$ in $\Theta_{n-1}$ such that $r \setminus p$ is a path (see Figure \ref{fig:entry on r}).%and $r$ is the last contour in $\Theta_{n-1}$ that $p_i$ traverses. 

    \begin{figure}[h]
        \centering
        \includegraphics[width=0.33
        \linewidth]{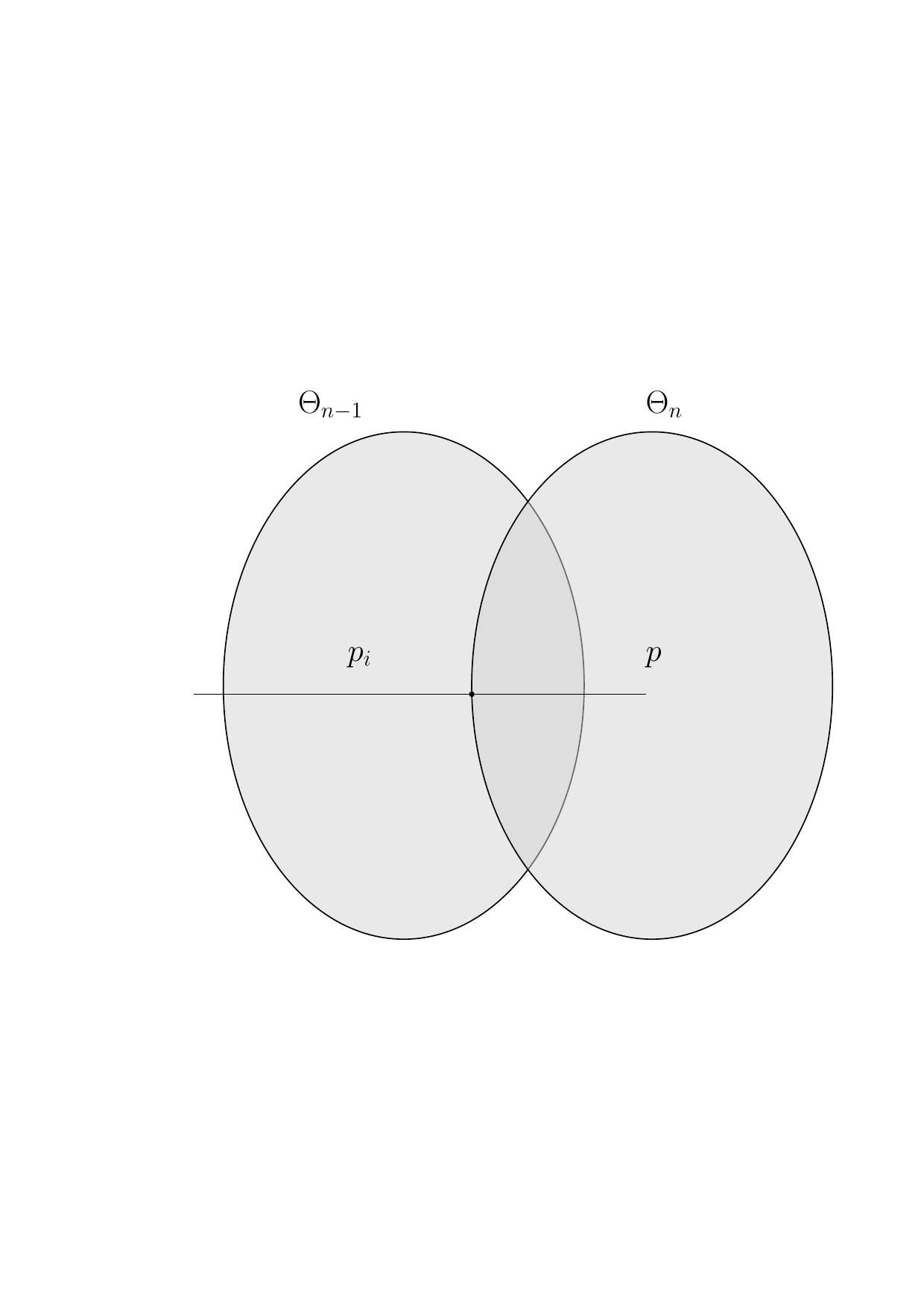}
        \caption{The case when $p_i$ enters $\Theta_{n}$ along $p$.}
        \label{fig:no contours in n-1}
    \end{figure}
    
    Indeed, by Lemma \ref{lem: form of geodesics in bundle}, inside $\Theta_{n-1}$ we have that $p_i$ and $p$ form a sequence of bigon diagrams (see Figure \ref{fig:entry on r}). %Each diagram contained in $\Theta_{n-1}$ consists of a single contour, since we assume that the diagrams are of minimal area. 

    \begin{figure}[h]
        \centering
        \includegraphics[width=0.33\linewidth]{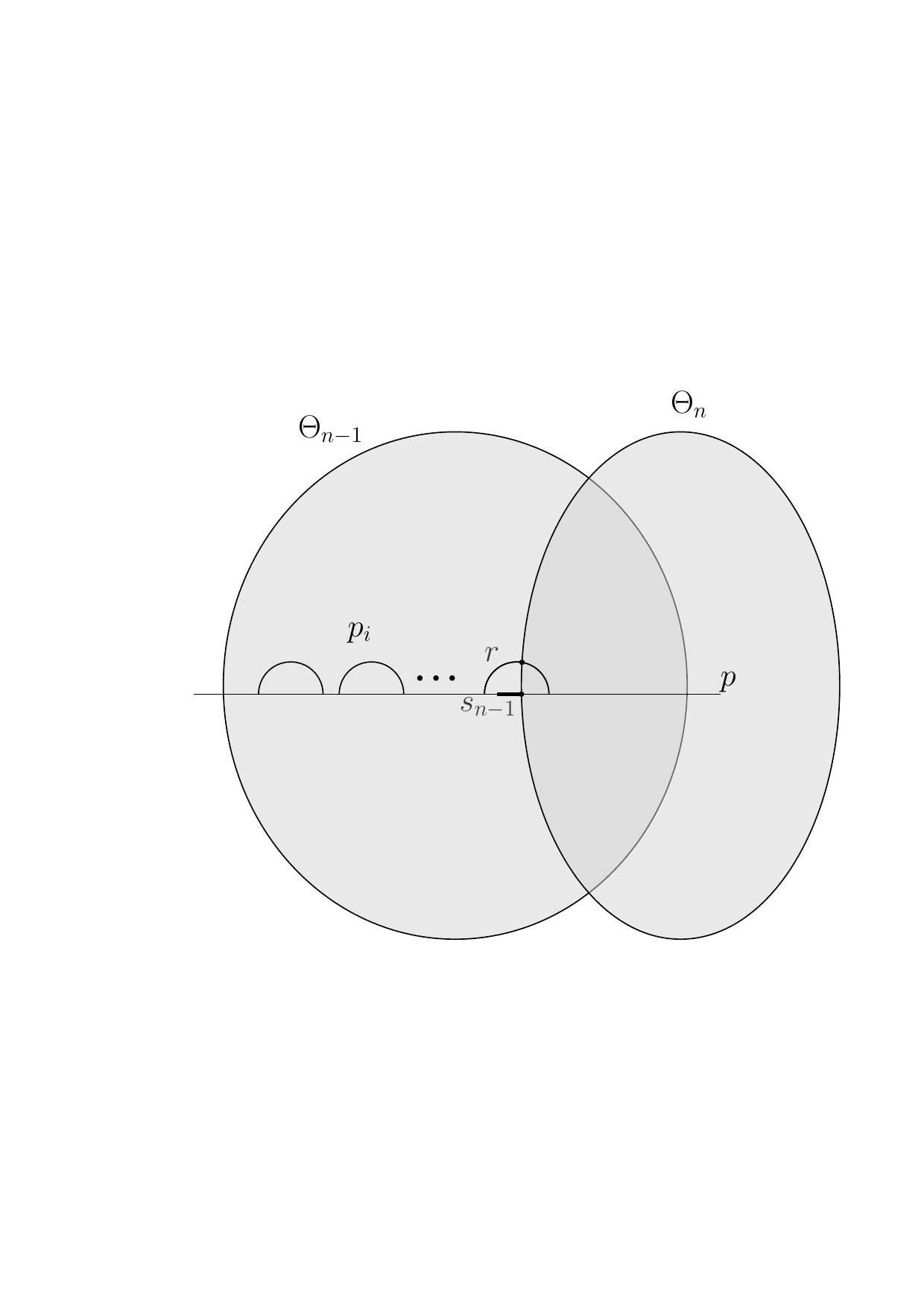}
        \caption{The case when $p_i$ enters $\Theta_n$ through a contour $r$.}
        \label{fig:entry on r}
    \end{figure}

    % \begin{figure}[H]
    %     \centering
    %     \includegraphics[width=0.5\linewidth]{Contours in n-1.pdf}
    %     \caption{The sequence of bigon diagrams formed by $p_i$ and $p$ consisting of single contours in $\Theta_{n}$.}
    %     \label{fig:contours in n-1}
    % \end{figure}
    
    Therefore, if $p_i$ does not enter $\Theta_{n}$ along $p$, then it enters $\Theta_n$ along a bigon diagram, %(which is the last bigon diagram formed by $p$ and $p_i$ contained in $\Theta_{n-1}$)
     hence through a contour $r \subset \Theta_{n-1}$, which is the last contour in $\Theta_{n-1}$ that $p_i$ traverses. In this case, since $\Theta_{n-1} \cap p \supset r \cap p$ and since $\vert r \cap p \vert > (\frac12 - 2 \lambda) \vert r \vert > 3 \lambda \girth(\Theta_{n-1})$, by the small cancellation condition, the relator $\Theta_{n-1}$ is the unique relator containing $r$. We will show that $(r \cap p)_+ \in \Theta_n$. Below, we define $((r \setminus p) \cap \Theta_n)_-$ to be {\color{black}the initial vertex of the last connected component of $(r \setminus p) \cap \Theta_n$} (note that $r \setminus p$ since $r$ is a contour in a diagram of shape $I_1$ bounded by $p$ and $p_i$; see Theorem \ref{thm: Strebel's bigon and triangle classification}). We consider the following cases. 

    \begin{enumerate}
        \item The contour $r$ is the only contour in its diagram. Then $(r \cap p)_+ \in p_i$. If $(\Theta_n \cap p)_-$ occurs after $(r \cap p)_+$ along $p$, then since there are no further bigon diagrams bounded by $p_i$ and $p$ past $r$, we have that $(\Theta_n \cap p)_- \in p_i$, and hence the segment between $((r \setminus p) \cap \Theta_n)_-$ and $(\Theta_n \cap p)_-$ is contained inside $p_i$ and contains $(r \cap p)_+$. By convexity of $\Theta_n$, this segment lies in $\Theta_n$, and hence $(r \cap p)_+ \in \Theta_n$. See Figure \ref{fig:one contour in diagram}.

        \begin{figure}[h]
            \centering
            \includegraphics[width=0.33\linewidth]{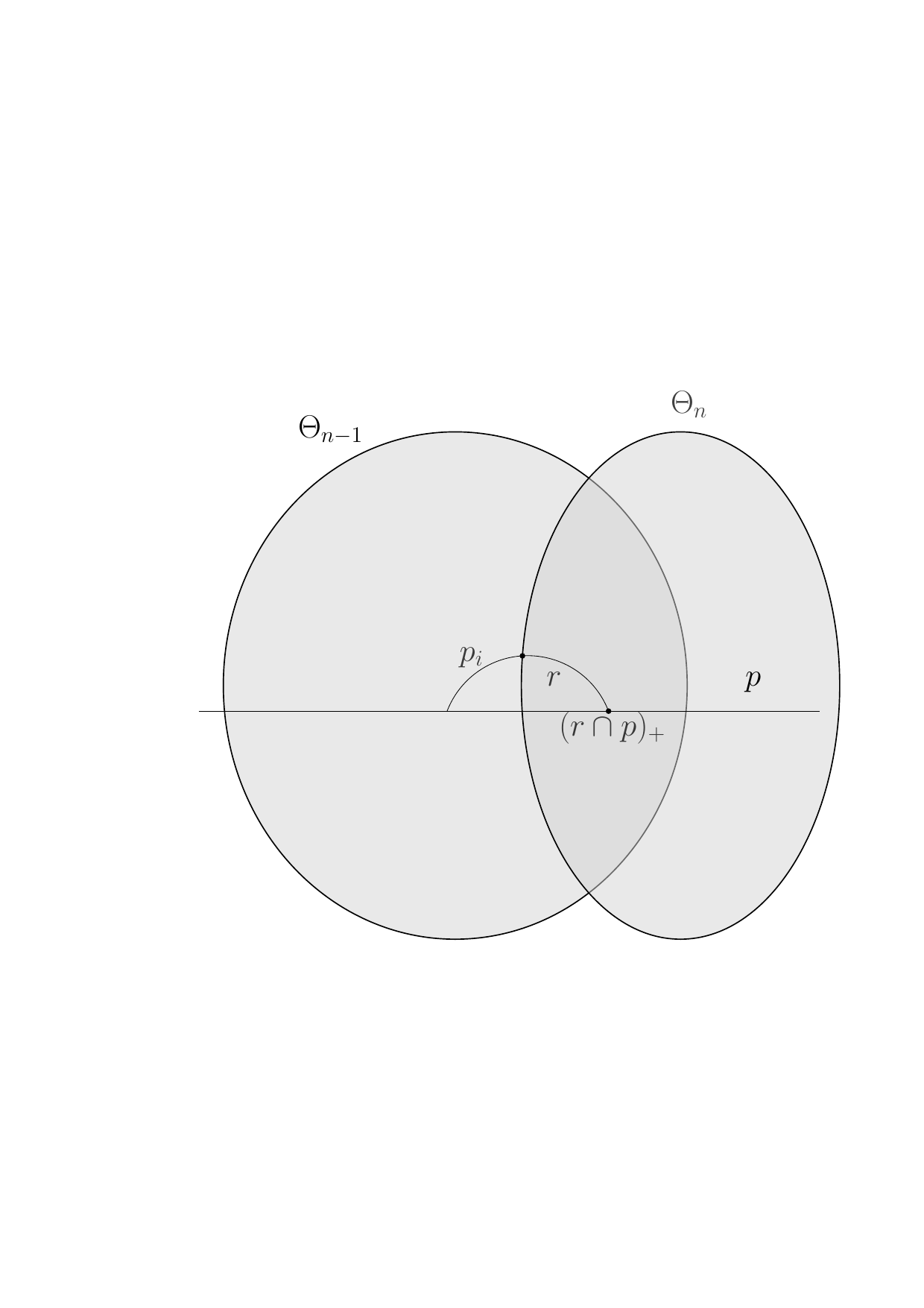}
            \caption{The case when the diagram formed by $p_i$ and $p$ consists of a single contour $r \subset \Theta_{n-1}$. In this case, by convexity of $\Theta_n$, we must have $(r \cap p)_+ \in \Theta_n$.}
            \label{fig:one contour in diagram}
        \end{figure}
        
        \item There exists a contour $r' \subset \Theta_n$ following $r$ in the same diagram. Then $(r \cap p)_+ = (r' \cap p)_- \in \Theta_n$. See Figure \ref{fig:other contour in diagram}.

        \begin{figure}[h]
            \centering
            \includegraphics[width=0.33\linewidth]{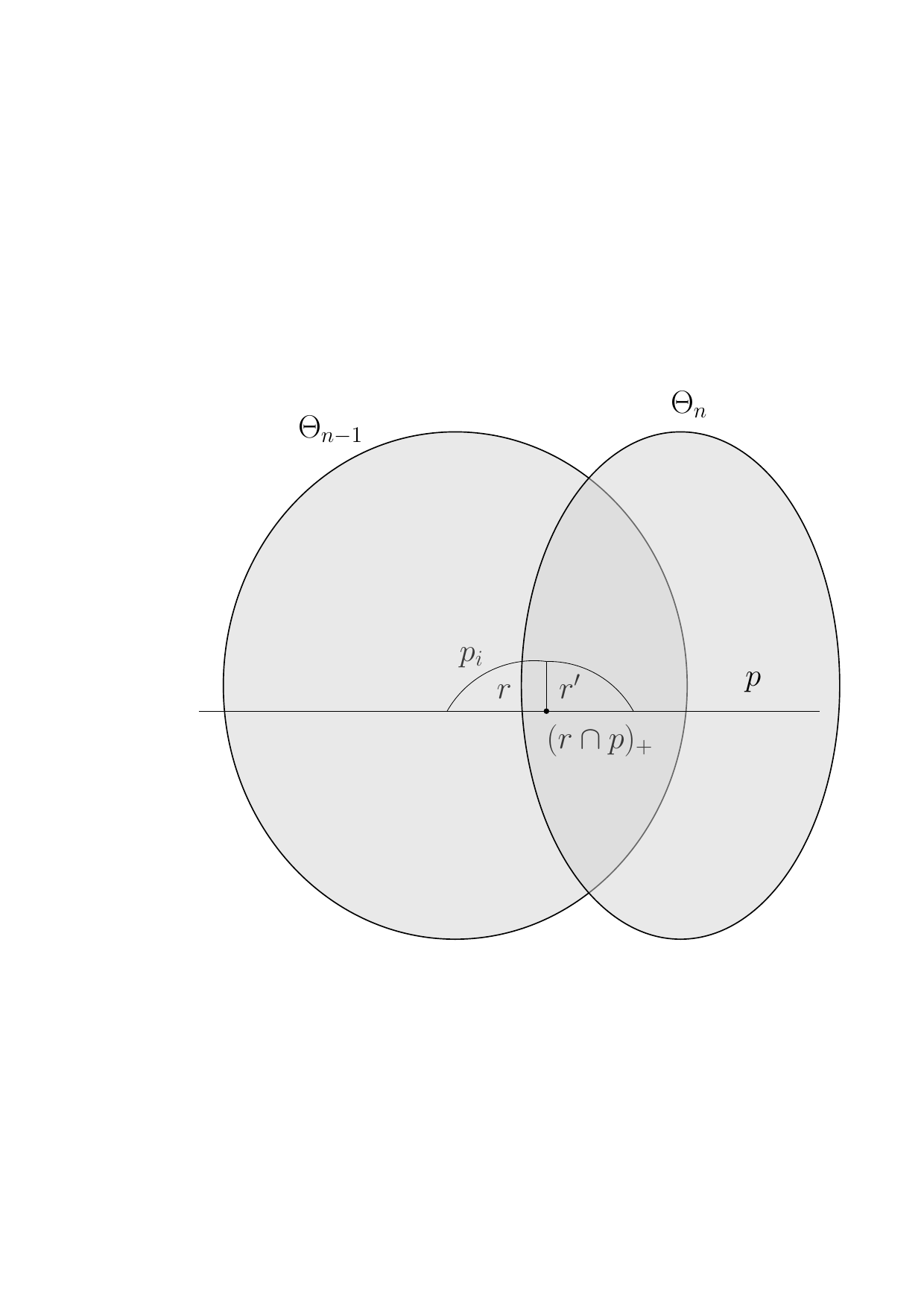}
            \caption{The case when the diagram formed by $p_i$ and $p$ consists of the contour $r \subset \Theta_{n-1}$ and another contour $r' \subset \Theta_n$.}
            \label{fig:other contour in diagram}
        \end{figure}
    \end{enumerate}

    Since $(r \cap p)_+ \in \Theta_n$, we have that $(\Theta_n \cap p)_-$ occurs before $(r \cap p)_+$ on $p$. We cannot have $(\Theta_n \cap p)_-$ occurring before $(r \cap p)_-$ on $p$, since then $r \cap p \subset \Theta_n \cap p$, so that $r \cap p$ is a piece of $\Theta_{n-1}$ and $\Theta_n$, but $\vert r \cap p \vert > \lambda \girth(\Theta_n)$, contradicting the small cancellation assumption. We conclude that $(r \cap p)_-$ must occur before $(\Theta_n \cap p)_-$ and hence $r \cap p$ must contain $s_{n-1}$. 

    In summary, we have shown that 
    $$(p_i \cap \Theta_n)_- \subset \{(p \cap \Theta_n)_-\} \cup \{((r \setminus p) \cap \Theta_n)_- : \text{$r$ is a contour with } s_{n-1} \subset r \subset \Theta_{n-1} \text{ such that $r\setminus p$ is a path}\},$$ 
    {\color{black}where $((r \setminus p) \cap \Theta_n)_-$ is defined to be the initial vertex of the last connected component of $(r \setminus p) \cap \Theta_n$.} By extreme fineness, the latter set has cardinality at most $1+K_0$.

    Thus, the number of points through which each $p_i$ (and hence, $g_i \alpha_i$), can enter $\Theta_n$ is at most $1+K_0$. For each $n \geq N$, let $V_n$ be the set of at most  $1+K_0$ vertices through which a geodesic ray $p_i$ is allowed to enter $\Theta_n$.

    For each $i = 1,\ldots,K$ and $n \geq N$, let $M_{i,n}$ denote the vertex $(p_i \cap \Theta_n)_-$ on $p_i$ through which $p_i$ enters $\Theta_n$.

\begin{figure}[h!]
    \centering
    \includegraphics[width=0.6\linewidth]{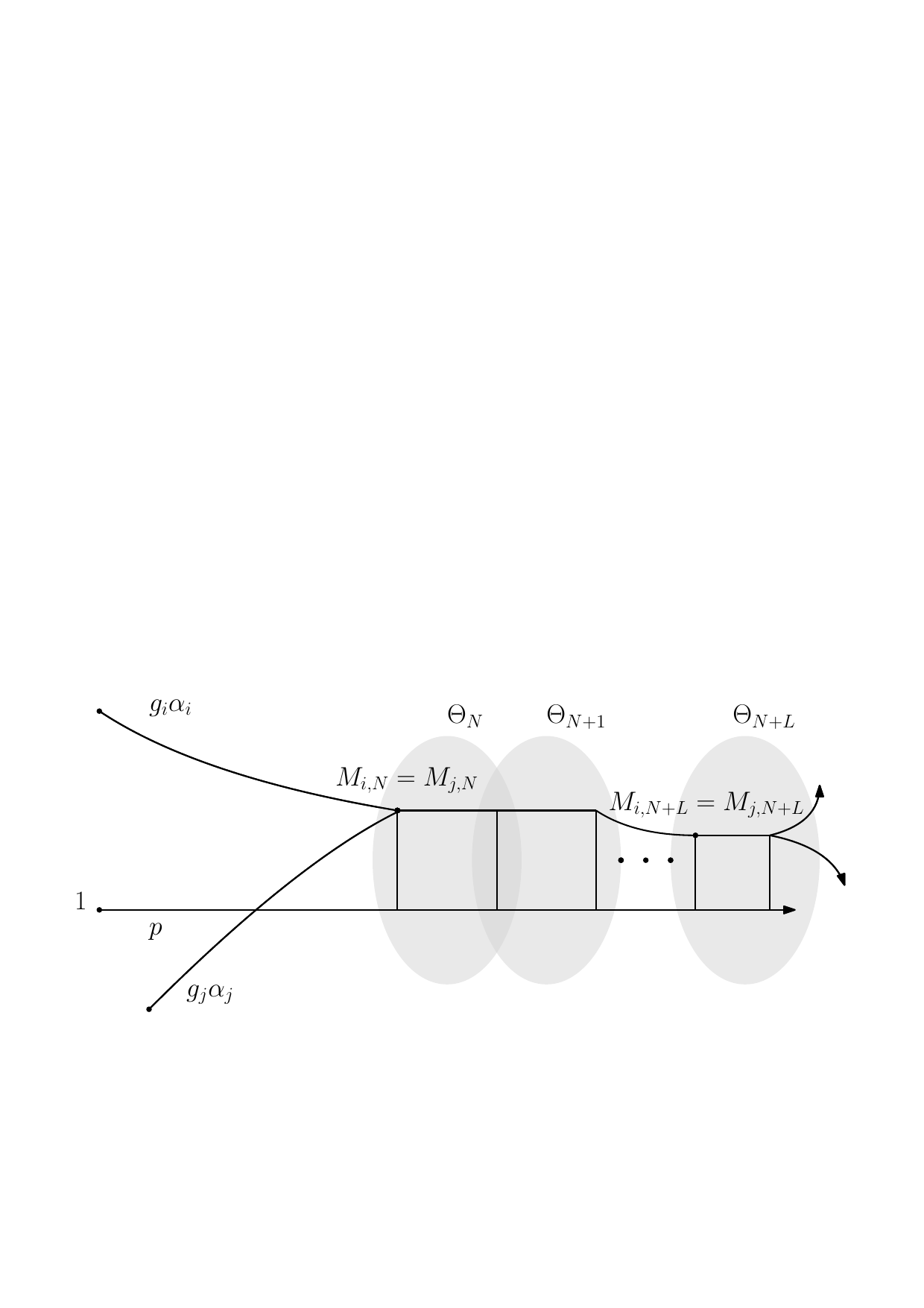}
    \caption{The rays $g_i \alpha_i$ and $g_j \alpha_j$ enter the subgraphs $\Theta_N$ and $\Theta_{N+L}$ through the same vertices $M_{i,N} = M_{j,N}$ and $M_{i,N+L} = M_{j,N+L}$, respectively.}
    \label{fig:geodesic rays in finite index proof}
\end{figure}

    Since the $\xi_i$ are pairwise $R_t$-inequivalent, there exists $L > 0$ such that the labels of the segments of each $p_i$ between $M_{i,N}$ and $M_{i, N+L}$ are pairwise distinct. Since there are at most $1+K_0$ vertices in $V_n$, there are at most $(1+K_0)^2$ choices for possible pairs of vertices $(v_N, v_{N+L}) \in V_N \times V_{N+L}$ through which geodesic rays $p_i$ can enter $\Theta_N$ and $\Theta_{N+L}$. Since $K > (1+K_0)^2$, by the Pigeonhole principle, there exist $i \neq j$ such that $M_{i,N} = M_{j,N}$ and $M_{i,N+L} = M_{j,N+L}$. Since \textcolor{black}{$\alpha_i, \alpha_j$ are geodesic rays with lexicographically least labels}, it follows that the segments on these geodesic rays between the intersection points $M_{i,N} = M_{j,N}$ and $M_{i,N+L} = M_{j,N+L}$ are the same (see Figure \ref{fig:geodesic rays in finite index proof}), contradicting the choice of $L$. 

    Thus, two $\xi_i$ must be $R_t$-equivalent.

\end{proof}
We now conclude the proof of our main theorem. 
\hypersetup{linkcolor=black}
\begin{proof} [Proof of the \hyperlink{Main Theorem}{Main Theorem}]
\hypersetup{linkcolor=red}
        Using the notation above, we have that $R_t \subset E_G$ and by Proposition \ref{prop: finite index} each $E_G$-class contains only finitely many $R_t$-classes. Since $R_t$ is hyperfinite, by Proposition \ref{prop:JKL}, we have that $E_G$ is hyperfinite. 
\end{proof}

	\bibliographystyle{plain} 
	\bibliography{refs}

\iffalse
\vspace{5mm}

\noindent CHRIS KARPINSKI

\noindent  Department of Mathematics and Statistics, McGill University, Burnside Hall,

\noindent 805 Sherbrooke Street West, Montreal, QC, H3A 0B9, Canada.

\noindent E-mail: \emph{christopher.karpinski@mail.mcgill.ca}

\vspace{3mm}

\noindent DAMIAN OSAJDA

\noindent  Institut for Matematiske Fag, University of Copenhagen, 2100 Copenhagen, Denmark.\\
\noindent Instytut Matematyczny, Uniwersytet Wroclawski
\noindent 
pl. Grunwaldzki 2/4, 50–384 Wroclaw, Poland.

\noindent E-mail: \emph{dosaj@math.uni.wroc.pl}

\vspace{3mm}

\noindent KOICHI OYAKAWA

\noindent  Department of Mathematics and Statistics, McGill University, Burnside Hall,

\noindent 805 Sherbrooke Street West, Montreal, QC, H3A 0B9, Canada.

\noindent E-mail: \emph{koichi.oyakawa@mail.mcgill.ca}
\fi

\end{document}